\documentclass[amsfonts,onesided,11pt]{amsart}
% comments are preceeded by a percent sign, and are not printed in the output

%-------Packages---------
\usepackage{amssymb,amsmath,amsfonts, amscd}
\usepackage[all,arc,cmtip]{xy}
\usepackage[bookmarks]{hyperref}
\usepackage{tikz}
\usepackage{enumerate}
\usepackage{mathrsfs}
\usepackage[margin=1in]{geometry}
%\usepackage[small,nohug,heads=vee]{diagrams}
%\diagramstyle[labelstyle=\scriptstyle]

%--------Extra-----------
%this will allow aligned equations to be "split" across pages
\allowdisplaybreaks 

%should number theorems inside subsections
%--------Commands--------

%--------Theorem Environments--------
%theoremstyle{plain} --- default
\newtheorem{thm}{Theorem}[section]
\newtheorem{cor}[thm]{Corollary}
\newtheorem{prop}[thm]{Proposition}
\newtheorem{lem}[thm]{Lemma}
\newtheorem{conj}[thm]{Conjecture}

\theoremstyle{definition}

\theoremstyle{remark}
\newtheorem{rem}[thm]{Remark}

% make equations and theorems follow the subsection counter
\makeatletter
\let\c@equation\c@thm
\makeatother
\numberwithin{thm}{subsection}
\numberwithin{equation}{subsection}
\bibliographystyle{plain}

%--------Meta Data: Fill in your info------

\begin{document}
\everymath{\displaystyle} %makes inline equations look nicer
\title{Support varieties of line bundle cohomology groups for $SL_3(k)$}
\author{William D. Hardesty}
\address
{Department of Mathematics\\ University of Georgia \\
Athens\\ GA~30602, USA}
\email{hardesty@math.uga.edu}
\date{\today}

\begin{abstract}
  Let $G= SL_3(k)$ where $k$ is a field of characteristic $p > 0$
  and let $\lambda \in X(T)$ be any weight with corresponding line bundle $\mathscr{L}(\lambda)$ on $G/B$. 
  In this paper we compute the support varieties for all modules of the form 
  $H^i(\lambda):= H^i(G/B, \mathscr{L}(\lambda))$ over the first Frobenius kernel 
  $G_1$. The calculation involves certain recursive character formulas given
   by Donkin (cf. \cite{DONK2007}) which can be used to compute the characters of the line bundle cohomology groups. In the case where 
  $\lambda$ is a $p$-regular weight and $M=H^i(\lambda)\neq 0$ for some $i$,
  these formulas are used to show that any $p^{th}$ root of unity $\zeta$ is not
  a root of the generic dimension of $M$. To handle the case where $\lambda$ is not  $p$-regular, we employ techniques 
  similar to those used by Drupieski, Nakano and Parshall (cf. \cite{DNP}) to show that the
  module $H^i(\lambda)$ is not projective over $G_1$ whenever it is  nonzero and $\lambda$ lies outside of the Steinberg block.
\end{abstract}
\maketitle

%\tableofcontents
\section{Introduction}
\subsection{ }
Let $G$ be a semisimple simply connected algebraic group over an algebraically closed field $k$ of charactersitic $p >0$.
 It is well known
that any finite dimensional $B$-module $M$ induces a vector bundle $\mathscr{L}(M)$ of rank $m:=\dim_kM$ on the flag manifold $G/B$.
An important problem in representation theory is to try to understand the structure of
the sheaf cohomology groups $H^i(M) :=H^i(G/B,\mathscr{L}(M))$ for any finite dimensional $B$-module $M$.

The most well studied case is when $M=\lambda$ for $\lambda \in X(T)$.
In particular, when $\lambda \in X(T)_+$, we know by Kempf's vanishing theorem that $H^i(\lambda) = 0$ for $i > 0$ and
that the character of $H^0(\lambda)$ is given by Weyl's character formula (cf. \cite[II.5]{jan2003}). 
For $\lambda \not\in X(T)_+$, the vanishing behavior of
$H^1(\lambda)$ has been completely determined by Andersen 
in \cite[Theorem 3.6]{and21979}. 
For arbitrary $\lambda \in X(T)$, little is known 
about the structure of the modules $H^i(\lambda)$. 
However, in \cite[Theorem 6.2.1]{npv2002}, the 
\emph{support varieties} $V_{G_1}(H^0(\lambda))$ were determined for all $\lambda \in X(T)_+$ when the underlying
field has good characteristic. 

In this paper we will determine $V_{G_1}(H^i(\lambda))$ for any $i$ and $\lambda \in X(T)$ when
$G=SL_3(k)$.  The technique will be to  bound the order in which 
$\psi_p(t)=1 + t + \cdots + t^{p-1}$ divides the generic dimension $\dim_tH^i(\lambda)\in\mathbb{Z}[t,t^{-1}]$
by using of the recursive character formulas given in \cite[Sections 4-6]{DONK2007} for 
the modules $H^i(\lambda)$. 

The following theorem is the main result of this paper
\begin{thm}\label{thm:main_result}
  Let $p$ be any prime and let $G=SL_3(k)$ where $\text{char}(k)=p$. If 
  $\lambda \in X(T)$ is any weight with $w\cdot \lambda \in X(T)_+$ for some $w \in W$ then
  \[
    V_{G_1}(H^i(\lambda)) = V_{G_1}(H^0(w\cdot \lambda))
  \]
  provided that $H^i(\lambda) \neq 0$.
\end{thm}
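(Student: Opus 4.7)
The plan is to compute $V_{G_1}(H^i(\lambda))$ directly via character-theoretic criteria and match the answer against the explicit description of $V_{G_1}(H^0(w\cdot\lambda))$ given by \cite[Theorem~6.2.1]{npv2002}. For $G=SL_3(k)$ the possible values of $V_{G_1}(H^0(\mu))$ for $\mu\in X(T)_+$ form a linearly ordered short list of nilpotent orbit closures indexed by the facet containing $\mu$, so the comparison reduces to determining which stratum $H^i(\lambda)$ falls into.

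The key tool is the generic dimension polynomial $\dim_t H^i(\lambda)\in\mathbb{Z}[t,t^{-1}]$. Via the standard rank-variety description of $V_{G_1}$, projectivity of $H^i(\lambda)$ over the restricted enveloping algebra of a root vector $e_\alpha$ is equivalent to divisibility of a suitable specialization of $\dim_t H^i(\lambda)$ by $\psi_p(t)=1+t+\cdots+t^{p-1}$. Tracking the order of vanishing of $\psi_p$ in these specializations then detects exactly which root vectors lie in $V_{G_1}(H^i(\lambda))$.

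Following the abstract, I would split into two cases. In the \emph{$p$-regular} case, Donkin's recursive character formulas \cite[Sections~4--6]{DONK2007} express $\mathrm{ch}\,H^i(\lambda)$ in terms of more elementary characters; the aim is to show that no primitive $p$-th root of unity is a root of $\dim_t H^i(\lambda)$, which forces $V_{G_1}(H^i(\lambda))$ to achieve the maximal stratum available for a $p$-regular weight---namely the full nilpotent cone, matching the NPV value for $V_{G_1}(H^0(w\cdot\lambda))$. In the \emph{$p$-singular} case outside the Steinberg block, divisibility artifacts make the direct character approach unreliable, so I would adapt the techniques of \cite{DNP} to argue instead that $H^i(\lambda)$ is not $G_1$-projective whenever nonzero. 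Because the only dominant weights $\mu$ with $V_{G_1}(H^0(\mu))=\{0\}$ are those in the Steinberg block, and the remaining strata are distinguished by which facet $w\cdot\lambda$ lies in, non-projectivity combined with $G$-equivariance of the support variety pins down $V_{G_1}(H^i(\lambda))$ as the desired stratum.

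The principal obstacle is the detailed combinatorics of the recursive character formulas: one must stratify $X(T)$ by $p$-alcove structure and track how Donkin's recursion unfolds on each facet, ensuring that it terminates in characters whose behavior at $p$-th roots of unity can be controlled. The $p$-singular case is especially delicate, since one cannot rely on the clean character criterion and must extract structural properties of $H^i(\lambda)$ as a $G_1$-module to rule out accidental projectivity.
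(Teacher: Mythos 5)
Your strategy is essentially the paper's own: establish the upper bound $V_{G_1}(H^i(\lambda))\subseteq G\cdot\mathfrak{u}_I$, then show the quantum dimension $\dim_\zeta H^i(\lambda)$ is nonzero for $p$-regular $\lambda$ (forcing the full nilpotent cone) and show non-$G_1$-projectivity in the subregular non-Steinberg case by differentiating the generic dimension in the style of \cite{DNP}. Two points deserve attention. First, the upper-bound half should be made explicit: it is not mere $G$-equivariance but the Strong Linkage Principle (every composition factor of $H^i(\lambda)$ is of the form $L(y\cdot\lambda)$) combined with the containment $V_{G_1}(L(y\cdot\lambda))\subseteq G\cdot\mathfrak{u}_I$ that caps the support variety at the correct orbit closure; without this, non-projectivity alone in the subregular case only rules out $\{0\}$ and does not exclude the full cone. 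Second, and this is the genuine gap, your two-case dichotomy does not cover $p=2$: for $SL_3$ there are no $2$-regular weights at all, Donkin's regular-weight recursions require $p\geq 3$, and the complexity bound you would invoke degrades because $d(\Phi,2)=2$ for type $A_2$. The paper closes this case by an entirely separate elementary argument (Section~\ref{sec:alt}): every weight with two non-vanishing cohomology groups at $p=2$ has the form $(a,b)-p^n\beta$, and a short exact sequence of $B$-modules built from $L(0,1)^{(n)}$ yields $\chi^1(\lambda)=\chi_p(0,p^n)\chi^1(\mu)+\chi^0(\lambda+p^n\beta)$, whence a mod~$3$ congruence shows the relevant specialization (or its derivative) cannot vanish. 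You would need either this or some substitute to honor the ``any prime'' hypothesis of the statement.
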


The proof of this theorem will be broken up into several sections. 
In Section~\ref{sec:gen_dim}, we will define the generic dimension of any rational $T$-module $V$ and
present some well known properties. The support variety for any module over a finite $k$-group will
be introduced  in Section~\ref{sec:supp_var}. Some general results including Proposition~\ref{prop:npv_result}, establishing the 
left inclusion of the main theorem stated above, will also be presented.

The rest of the paper will deal with the right inclusion, which will require a great deal of computation. 
The recursive character formulas mentioned above will be given in a correct, 
simplified form in Section~\ref{sec:dim_formulas}. We will employ these formulas to determine the support varieties for
all $p$-regular weights in Section~\ref{sec:reg_case}, by transforming them into 
simpler recursive quantum dimension formulas which
are then shown to be non-vanishing (cf. Theorem~\ref{thm:reg_result}). The case where $p\geq 3$ and
$\lambda$ is not $p$-regular is dealt with in Section~\ref{sec:sreg_case} by proving that 
the modules $H^i(\lambda)$ are not projective over $G_1$ whenever  $\lambda$ lies outside of the Steinberg block 
(cf. Proposition~\ref{thm:proj_result}). For clarity, a number of the technical lemmas which are used to 
establish the subregular result have been shifted back to Section~\ref{sec:verif}.

In Section~\ref{sec:alt}, we show that a simpler alternative technique can be used to compute 
the support varieties for many types of weights. In particular, this technique will handle the  $p=2$ case
of Theorem~\ref{thm:main_result}. Thus the theorem will be verified for all primes $p$. 
Finally, in Section~\ref{sec:conj} we will present an open conjecture regarding the case when $G$ is any 
semisimple simply connected algebraic group. 

\subsection{Notation}
The root system of $G$ with respect to a maximal torus $T$ will be denoted by $\Phi$, and its basis will be denoted by
$\Pi = \{\alpha_1,\dots,\alpha_n\}$. We shall denote by $\Phi^+$, the set of positive roots with respect to $\Pi$.
The Weyl group of $\Phi$ will be denoted by $W= N_G(T)/T$ and the affine Weyl group will be denoted by $W_p$. 
The affine Weyl group can be expressed as 
\[
  W_p = W \ltimes p\mathbb{Z}\Phi.
\]

We will denote by $B \supseteq T$  the Borel subgroup induced by the negative roots $-\Phi^+$.
The character group of $T$ will be denoted $X(T)=X(B)$. We define the \emph{formal character} of a $T$-module $V$ to be 
\[
  \text{ch}\, V = \sum_{\lambda \in X(T)}(\dim V_{\lambda})\,e(\lambda) \in \mathbb{Z}[X(T)]
\]
where $V_{\lambda} = \{ v \in V \, \mid \, t\cdot v = \lambda(t)v \text{ for all $t \in T$} \}$.
For each $\lambda \in X(T)$, we will identify $\lambda = k_{\lambda}$ with the one dimensional $B$-module where 
$b\cdot 1 = \lambda(b)1$ for any $b \in B$. 
The affine Weyl group acts naturally on $X(T)$ by affine transformations, where each $w \in W_p$ can be viewed as an operator $w: X(T) \rightarrow X(T)$.
This induces an action on the ring $\mathbb{Z}[X(T)]$ via $w\cdot e(\lambda) = e(w(\lambda))$ for $w \in W_p$ and $\lambda \in X(T)$. It
is well known that the invariant ring $\mathbb{Z}[X(T)]^W$ is isomorphic to the Grothendieck ring
for $G$ (cf. \cite[II.5]{jan2003}). 

The set of
dominant integral weights is given by
\[
  X(T)_+=\{\lambda \in X(T) \, \mid \, \langle\lambda,\alpha^{\vee}\rangle \geq 0 \text{ for all $\alpha \in \Pi$} \}.
\]
For each $r \geq 1$, we can define the set of $p$-restricted weights by 
\[
  X_r(T) = \{ \lambda \in X(T) \, \mid \, 0 \leq \langle\lambda,\alpha^{\vee}\rangle <p^r\text{ for all $\alpha \in \Pi$} \}.
\]

Define $\rho = \frac{1}{2}\sum_{\alpha\in\Phi}\alpha$ and the 
\emph{Coxeter number}  $h = \text{max}(\rho,\alpha_{0}^{\vee})+1$ where 
$\alpha_0 \in \Phi$ is the maximal short root if $\Phi$ is indecomposable,
otherwise we take the maximum of the Coxeter numbers over each irreducible
component of $\Phi$ (cf. \cite[II.6.2]{jan2003}). 
The affine Weyl group $W_p$ also acts on $X(T)$ by the \emph{dot action}, which is given by 
\[
  w\cdot \lambda = w(\lambda + \rho)-\rho.
\]
Two weights $\lambda, \mu \in X(T)$ are said to be $\emph{linked}$ if $\lambda = w\cdot \mu$ for some $w \in W_p$. 
We call the weight $\xi = (p-1)\rho$ the \emph{Steinberg weight}, and we shall say that a weight $\lambda \in X(T)$
is in the \emph{Steinberg block} if $\lambda = w\cdot \xi$ for some $w \in W_p$.

We also define the numbers $d_{\alpha} = \langle\alpha,\alpha\rangle/\langle \alpha_0, \alpha_0 \rangle$
for any $\alpha \in \Phi$ so that if $C = (\langle \alpha_i,\alpha_j^{\vee}\rangle)$ is the Cartan matrix
and $d_i = d_{\alpha_i}$, then the matrix $C\cdot\text{diag}(d_1,d_2,\dots,d_n)$ is symmetric. 

A prime $p$ is called \emph{good} for $G$ if $p>2$ when $G$ has a component of type $B$,$C$ or $D$;
$p>3$ when $G$ has a component of type $G_2$, $F_4$, $E_6$ or $E_7$ and $p>5$ when $G$ has component of
type $E_8$, otherwise $p$ is called \emph{bad}.  %Throughout this paper we will assume that $p$ is good. 

Assuming that $p$ is good, we define for each $\lambda \in X(T)$ the subroot system
\[
  \Phi_{\lambda} = \{\alpha \in \Phi\, \mid \, d_{\alpha}\langle \lambda + \rho,\alpha^{\vee}\rangle \in p\mathbb{Z} \}.
\]
If $|\Phi_{\lambda}| = \emptyset$, then $\lambda$ is called \emph{regular} (or \emph{$p$-regular}), otherwise $\lambda$ is called \emph{subregular}.
If any two weights $\lambda$, $\mu$ are linked, then $\Phi_{\lambda} = w(\Phi_{\mu})$
for some $w \in W$. 
It can be shown that $\lambda$ is in the Steinberg block if and only if $\Phi_{\lambda} = \Phi$. 
We also see that there is a subset $I \subset \Pi$ and $w \in W$ such that $w(\Phi_{\lambda})=\Phi_I$
where $\Phi_I \subset \Phi$ is the subroot system generated by $I$. 

Let $H$ be any algebraic group defined over $k$ which has an $\mathbb{F}_p$-structure, and let $F: H \rightarrow H$ denote the \emph{geometric Frobenius endomorphism} of 
$H$ (cf. \cite[I.9.2]{jan2003}). 
% and let $F: H\rightarrow H^{(1)}$ be the Frobenius morphism, where $H^{(1)}$ is the affine algebraic group such that 
%$k[H^{(1)}]$ and $k[H]$ are equal as rings, but  $a \in k$ acts on $k[H^{(1)}]$ as $a^{p^{-1}}$  acts on $k[H]$. 
The Frobenius kernel is defined as $H_1=\text{ker}(F)$. 
For any $H$-module $V$, let $V^{(1)}$ be the $H$-module whose action is twisted by $F$ so that 
$h \in H$ acts on $V^{(1)}$ as $F(h)$ acts on $V$.

We will also define the endomorphism
\begin{align*}
  (-)^F: \mathbb{Z}[X(T)] &\rightarrow \mathbb{Z}[X(T)] \\
               e(\lambda) &\mapsto e(p\lambda)
\end{align*}
For any $T$-module $V$ that 
\[
  \text{ch}\,V^{(1)}= (\text{ch}\,V)^F. 
\]

\subsection{Acknowledgements} 
This paper is a part of the author's PhD dissertation and he would like thank
his PhD thesis advisor, Daniel Nakano, for all of his consultation during this project.
The author also thanks Stephen Donkin, Henning Haahr Andersen, James Humphreys and others for their feedback and suggestions. Additionally, the author appreciates the assistance of Jun Zhang in verifying some of the identities. 

\section{The generic dimension of a module}\label{sec:gen_dim}
\subsection{ }
For any  $\lambda \in X(T)$, we can write 
 $ \lambda= \sum_{i=1}^n n_i\alpha_i$
for some  $n_i \in \mathbb{Q}$. If we now set
\[
  \text{wht}(\lambda) = \sum d_in_i \in \mathbb{Q},
\]
it can be verified that
\[
  \text{wht}(\lambda) = \frac{2\langle\lambda,\rho\rangle}{\langle\alpha_0,\alpha_0\rangle} 
  = \frac{1}{2}\sum_{\alpha \in \Phi^+}d_{\alpha}\langle\lambda,\alpha^{\vee}\rangle \in (1/2)\mathbb{Z},
\]
and hence $2\,\text{wht}(\lambda) \in \mathbb{Z}$. Thus, we can define the 
\emph{weighted height function} 
\begin{align*}
  \text{wht}: X(T) &\rightarrow (1/2)\mathbb{Z} \\
             \lambda&\mapsto \text{wht}(\lambda).
\end{align*}
%\begin{proof}
%  For each $i$ observe that 
%  \[
%    \frac{2(\alpha_i,\rho)}{(\alpha_0,\alpha_0)} = 
%    \frac{(\alpha_i^v,\rho)(\alpha_i,\alpha_i)}{(\alpha_0,\alpha_0)} =
%    \frac{(\alpha_i,\alpha_i)}{(\alpha_0,\alpha_0)}
%    = d_i.
%  \]
%  In particular we have that 
%  \begin{align*}
%    \frac{2(\lambda,\rho)}{(\alpha_0,\alpha_0)} &= \sum_{\alpha\in \Phi_+}\frac{(\lambda,\alpha)}{(\alpha_0,\alpha_0)} 
%     = \frac{1}{2}\sum_{\alpha\in \Phi_+}\frac{(\lambda,\alpha^v)(\alpha,\alpha)}{(\alpha_0,\alpha_0)}
%     = \frac{1}{2}\sum_{\alpha\in \Phi_+}\frac{(\lambda,\alpha^v)(\alpha,\alpha)}{(\alpha_0,\alpha_0)}
%     = \frac{1}{2}\sum_{\alpha\in \Phi_+}(\lambda,\alpha^v)d_{\alpha}.
%  \end{align*}
%\end{proof}

For any $T$-module $V$, define the \emph{generic dimension}
\[
  \dim_tV = \sum (\dim V_{\lambda})t^{-2\,\text{wht}(\lambda)} \in \mathbb{Z}[t,t^{-1}],
\]
(cf. \cite[Section 3.1.1]{npv2002}). Observe from the definition that $\dim V = \dim_1 V$.

The generic dimensions of the induced modules are computed by \emph{Weyl's generic dimension formula}
  \[
    \dim_{t}H^0(\lambda) = \frac{\prod_{\alpha \in \Phi^+} t^{d_{\alpha}(\lambda + \rho,\alpha^{\vee})} 
    - t^{-d_{\alpha}(\lambda + \rho,\alpha^{\vee})}}{\prod_{\alpha \in \Phi^+} t^{d_{\alpha}(\rho,\alpha^{\vee})} 
    - t^{-d_{\alpha}(\rho,\alpha^{\vee})}}
  \]
(cf. \cite[Section 3.1.2]{npv2002}).

\subsection{Properties of Generic Dimension}
Define the ring homomorphism 
\begin{align*}
  \varphi: \mathbb{Z}[X(T)] &\rightarrow \mathbb{Z}[t,t^{-1}] \\
           e(\lambda) &\mapsto t^{-2\,\text{wht}(\lambda)}. 
\end{align*}
%This is the unique lift of the group homomorphism $X(T) \rightarrow \mathbb{Z}$
%sending $\lambda \mapsto -2\,\text{wht}(\lambda).$
This homomorphism allows us to transfer many of the useful properties for characters
to properties for the generic dimension. 

\begin{prop}
  Let $V$, $W$ be $X(T)$-graded vector spaces. The following identities are satisfied
  \begin{align*}
    \dim_t (V\otimes W) &= \dim_tV\cdot \dim_tW \\
    \dim_t V^* &= \dim_{t^{-1}}V \\
    \dim_t V^{(1)}  &= \dim_{t^p}(V). \\
  \end{align*}
\end{prop}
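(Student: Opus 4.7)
The plan is to observe that each of the three identities translates, via the ring homomorphism $\varphi$, into a known identity at the level of formal characters. Specifically, note that the definition of $\dim_t V$ can be rephrased as
\[
  \dim_t V = \varphi(\text{ch}\,V),
\]
since applying $\varphi$ term-by-term to $\text{ch}\,V = \sum_\lambda (\dim V_\lambda)\,e(\lambda)$ produces exactly $\sum_\lambda (\dim V_\lambda)\,t^{-2\,\text{wht}(\lambda)}$. With this observation in hand, the three identities reduce to elementary checks.

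For the tensor product identity, I would use the standard fact that $\text{ch}(V\otimes W) = \text{ch}\,V \cdot \text{ch}\,W$, which follows immediately from $(V\otimes W)_\mu = \bigoplus_{\lambda} V_\lambda \otimes W_{\mu-\lambda}$. Since $\varphi$ is a ring homomorphism, applying it to both sides gives $\dim_t(V\otimes W) = \varphi(\text{ch}\,V)\cdot \varphi(\text{ch}\,W) = \dim_t V\cdot \dim_t W$.

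For the duality identity, the key fact is $(V^*)_\lambda \cong (V_{-\lambda})^*$, so $\dim (V^*)_\lambda = \dim V_{-\lambda}$. Combined with the linearity relation $\text{wht}(-\lambda) = -\text{wht}(\lambda)$ (visible from the definition $\text{wht}(\lambda) = \sum d_i n_i$ when $\lambda = \sum n_i \alpha_i$), reindexing the sum by $\mu = -\lambda$ gives
\[
  \dim_t V^* = \sum_\lambda (\dim V_{-\lambda})\,t^{-2\,\text{wht}(\lambda)} = \sum_\mu (\dim V_\mu)\,(t^{-1})^{-2\,\text{wht}(\mu)} = \dim_{t^{-1}} V.
\]

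For the Frobenius twist identity, the weight space decomposition of $V^{(1)}$ satisfies $(V^{(1)})_{p\lambda} = V_\lambda$, so $\text{ch}\,V^{(1)} = (\text{ch}\,V)^F$ as already noted in the excerpt. Again using $\text{wht}(p\lambda) = p\,\text{wht}(\lambda)$, we compute
\[
  \dim_t V^{(1)} = \sum_\lambda (\dim V_\lambda)\,t^{-2p\,\text{wht}(\lambda)} = \sum_\lambda (\dim V_\lambda)\,(t^p)^{-2\,\text{wht}(\lambda)} = \dim_{t^p}(V).
\]
There is no real obstacle here — the only thing to verify carefully is the linearity of the weighted height function on $X(T)$, which is clear from its definition, and that $\varphi$ is multiplicative, which is built into the specification of $\varphi$ as a ring homomorphism. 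So the proof is essentially a one-line translation of each corresponding character identity through $\varphi$.
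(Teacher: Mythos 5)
Your proof is correct and follows the same route as the paper: both reduce each identity to the corresponding formal character identity and push it through the ring homomorphism $\varphi$, using the linearity of $\mathrm{wht}$ (in particular $\mathrm{wht}(p\lambda)=p\,\mathrm{wht}(\lambda)$ and $\mathrm{wht}(-\lambda)=-\mathrm{wht}(\lambda)$). The paper's version is just a terser statement of the same argument.
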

\begin{proof}
  The characters  satisfy
  analogous properties (cf. \cite[I.2]{jan2003}) to those sketched above. Thus we can apply $\varphi$ to 
  $\text{ch}\,V, \text{ch}\,W, \text{ch}\,V^*$ etc. to get
  the above identities. 
  For example, the last identity follows from the fact that
  $\text{wht}(p\lambda) = p\,\text{wht}(\lambda)$ and hence
  $\varphi(e(p\lambda)) = (t^p)^{-2\,\text{wht}(\lambda)}$. 
\end{proof}
We define the twisting endomorphism 
\begin{align*}
  (-)^F: \mathbb{Z}[t,t^{-1}] &\rightarrow \mathbb{Z}[t,t^{-1}]\\ 
               t &\mapsto t^p
\end{align*}
which is compatible, under $\varphi$, with the endomorphism $(-)^F$
on $\mathbb{Z}[X(T)]$. 

The following lemma regarding the derivative of the generic dimension will
prove useful when we calculate $V_{G_1}(H^i(\lambda))$ for  $\lambda$ subregular. 
\begin{lem}\label{lem:deriv-vanish}
  Let $G$ be a semisimple simply connected algebraic group over $k$, and let
  $\theta \in \mathbb{Z}[X(T)]^W$. If $f(t) = \varphi(\theta) \in \mathbb{Z}[t,t^{-1}] $ denotes the specialization of $\theta$ to $\mathbb{Z}[t,t^{-1}]$, then $f'(1) = 0$. 
\end{lem}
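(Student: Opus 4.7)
The plan is to expand $\theta$ as a $\mathbb{Z}$-linear combination of $W$-orbit sums and show that each orbit contributes zero to $f'(1)$. Since $\theta \in \mathbb{Z}[X(T)]^W$ and the coefficients must be constant on $W$-orbits, we can write
\[
  \theta = \sum_{O} c_O \sum_{\lambda \in O} e(\lambda),
\]
where the outer sum ranges over the finitely many $W$-orbits $O \subset X(T)$ in the support of $\theta$. Applying $\varphi$ and differentiating gives
\[
  f'(t) = -2 \sum_O c_O \sum_{\lambda \in O} \text{wht}(\lambda)\, t^{-2\,\text{wht}(\lambda)-1},
\]
so evaluating at $t = 1$ reduces the lemma to the claim that $\sum_{\lambda \in O} \text{wht}(\lambda) = 0$ for every $W$-orbit $O$.

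The next observation is that $\text{wht}$ is a linear functional: the identity recalled in Section~\ref{sec:gen_dim} writes $\text{wht}(\lambda) = \frac{2\langle\lambda,\rho\rangle}{\langle\alpha_0,\alpha_0\rangle}$, hence
\[
  \sum_{\lambda \in O} \text{wht}(\lambda) = \frac{2}{\langle\alpha_0,\alpha_0\rangle} \Bigl\langle \sum_{\lambda \in O} \lambda,\, \rho \Bigr\rangle.
\]
It therefore suffices to show that each $W$-orbit sums to zero in the real vector space $E = X(T) \otimes \mathbb{R}$. Fixing a representative $\mu \in O$, the standard averaging identity
\[
  \sum_{w \in W} w(\mu) = |\text{Stab}_W(\mu)| \sum_{\lambda \in O} \lambda
\]
lets me replace the orbit sum by an averaged sum, which is manifestly $W$-invariant. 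The result now reduces to showing $E^W = 0$, which is precisely where the semisimplicity of $G$ is used: the simple roots span $E$, so any $W$-invariant vector lies in $\bigcap_{\alpha \in \Pi} \alpha^\perp = 0$.

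There is no significant obstacle here; the proof is mostly a linear-algebra book-keeping exercise. The only point worth flagging is that although the individual terms $t^{-2\,\text{wht}(\lambda)}$ involve $\text{wht}$ taking half-integer values, the hypothesis $2\,\text{wht}(\lambda) \in \mathbb{Z}$ ensures that $\varphi(\theta)$ is a genuine Laurent polynomial and so the derivative at $t = 1$ is unambiguous. The semisimplicity hypothesis is essential and enters precisely at the step $E^W = 0$; without it a nontrivial center of the $W$-action could produce a nonzero orbit sum.
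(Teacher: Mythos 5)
Your proof is correct, and it diverges from the paper's argument at the decisive step in a way worth noting. Both proofs begin identically: expand $\theta$ in $W$-orbit sums, differentiate, and use the linearity of $\text{wht}$ to reduce the claim to the vanishing of an orbit sum. The paper then reduces further to showing $\sum_{w\in W}\text{wht}(w\alpha)=0$ for each \emph{simple root} $\alpha$, and proves this combinatorially by exhibiting the explicit involution $w\mapsto s_{w\alpha}w$ between $\{w : w\alpha\in\Phi^+\}$ and $\{w : w\alpha\in\Phi^-\}$, which pairs each term $\text{wht}(w\alpha)$ with its negative. You instead push the linearity one step further and show that the orbit sum $\sum_{w\in W}w(\mu)$ vanishes as a \emph{vector} in $E=X(T)\otimes\mathbb{R}$: it is manifestly $W$-fixed, and $E^W=0$ because the simple roots span $E$ when $G$ is semisimple. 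Your route is shorter, avoids the reduction to simple roots entirely, and isolates exactly where semisimplicity enters (a point the paper leaves implicit); the paper's route is more elementary in that it never appeals to $E^W=0$ and works entirely inside the Weyl group combinatorics. One bookkeeping remark: your orbit sums $\sum_{\lambda\in O}e(\lambda)$ differ from the paper's $s(\lambda)=\sum_{w\in W}e(w\lambda)$ by the factor $|\mathrm{Stab}_W(\mu)|$, which you correctly account for and which is harmless since only the vanishing is at stake.
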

\begin{proof}
For each $\lambda \in X(T)$, we define the orbit sum 
\[
  s(\lambda) = \sum_{w\in W}e(w\lambda) \in \mathbb{Z}[X(T)]^W.
\]
The set of all $s(\lambda)$ with $\lambda \in X(T)_+$ form a basis for $\mathbb{Z}[X(T)]^W$. 
For any $W$ invariant element $\theta$, we can always write  
$\theta = \sum_{\lambda \in X(T)_+}a_{\lambda}s(\lambda)$. 
Therefore, we can assume without loss of generality that $\theta = s(\lambda)$
for some $\lambda \in X(T)_+$. Thus
\[
  f(t) = \sum_{w\in W}t^{-2\,\text{wht}(w\lambda)}
\]
and 
\[
  f'(1) = -2\sum_{w\in W}\text{wht}(w\lambda)
\]
where if we write $\lambda = \sum_{i=1}^nn_i\alpha_i$, then $\text{wht}(\lambda) = \sum n_id_i$,
in particular, $\text{wht}(\alpha_i) = d_i$.  
Since the $\text{wht}$ map is linear  for each $w \in W$ 
\[
  \text{wht}(w\lambda) = \sum_{i=1}^nn_i\text{wht}(w\alpha_i).
\]
This gives us 
\[
  f'(1) = -2\sum_{w\in W}\sum_{i=1}^nn_i\text{wht}(w\alpha_i).
\]

It now suffices to show that for any simple root $\alpha$,
\[
  \sum_{w\in W}\text{wht}(w\alpha) = 0.
\]
If we let $W_{\alpha}^+ = \{ w \in W \mid w\alpha \in \Phi^+\}$ and $W_{\alpha}^- = \{ w \in W \mid w\alpha \in \Phi^-\}$, then 
\[
W = W_{\alpha}^+ \sqcup W_{\alpha}^-.
\]
Also, for each $\beta \in \Phi^+$, let $s_{\beta} \in W$ denote the reflection across the hyperplane orthogonal to $\beta$. 
We can see that for $w \in W_{\alpha}^+$, $(s_{w\alpha}w)\alpha = s_{w\alpha}(w\alpha) = -w\alpha$.
Thus, the map
\begin{align*}
W_{\alpha}^+ &\rightarrow W_{\alpha}^- \\
w &\mapsto s_{w\alpha}w
\end{align*}
is a bijection, whose inverse is given by 
\begin{align*}
W_{\alpha}^- &\rightarrow W_{\alpha}^+ \\
w &\mapsto s_{w\alpha}w.
\end{align*}
Therefore, 
\begin{align*}
  \sum_{w\in W}\text{wht}(w\alpha)  &=  \sum_{w\in W_{\alpha}^+}\text{wht}(w\alpha) +  \sum_{w'\in W_{\alpha}^-}\text{wht}(w'\alpha) \\
  						      &= \sum_{w\in W_{\alpha}^+}(\text{wht}(w\alpha) +  \text{wht}((s_{w\alpha}w)\alpha)) \\
						      &= \sum_{w\in W_{\alpha}^+}(\text{wht}(w\alpha) + \text{wht}(-w\alpha)) \\
						      &= 0.
\end{align*}

%This will follow from the fact that for each $w \in W$, there exists a $w' \in W$
%such that $\text{wht}(w\alpha) = -\text{wht}(w'\alpha)$. 
%%To prove this we set 
%%$d=d_{\alpha} = (\alpha,\alpha)/(\alpha_0,\alpha_0)$ then we observe that $d_{w\alpha}=d_{\alpha}$
%%for any $w \in W$.
%To prove this, we take $w \in W$ arbitrary and let $\beta = w\alpha \in \Phi$.  By the definition of
%the Weyl group, there  exists $w_1 \in W$ such that $w_1\beta = -\beta$, since the Weyl group
%is generated by reflections across the hyperplanes orthogonal to each root. 
%Thus $\text{wht}(w_1\beta)=-\text{wht}(\beta)$ and so $w' = w_1w^{-1}$ will suffice. 
\end{proof}
The following two corollaries are immediate. 
\begin{cor}
  Let $G$ be a semisimple algebraic group over $k$, and let 
  $\theta^F \in \mathbb{Z}[X(T)]^W$ be arbitrary.
If $f(t) = \varphi(\theta^F) \in \mathbb{Z}[t,t^{-1}] $ denotes the specialization of $\theta^F$, 
  then for any $p^{th}$ root of unity $\zeta$, $f'(\zeta)=0$. 
\end{cor}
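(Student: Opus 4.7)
The plan is to reduce this corollary directly to Lemma \ref{lem:deriv-vanish} via the chain rule, exploiting the compatibility between the Frobenius twist on $\mathbb{Z}[X(T)]$ and the substitution $t \mapsto t^p$ on $\mathbb{Z}[t,t^{-1}]$ noted just after the proof of the lemma.

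First I would write $\theta^F$ explicitly. If $\theta = \sum_\lambda a_\lambda e(\lambda)$ is $W$-invariant, then $\theta^F = \sum_\lambda a_\lambda e(p\lambda)$. Applying $\varphi$ and using $\text{wht}(p\lambda) = p\,\text{wht}(\lambda)$, one gets
\[
  f(t) = \varphi(\theta^F)(t) = \sum_\lambda a_\lambda t^{-2p\,\text{wht}(\lambda)} = g(t^p),
\]
where $g(t) = \varphi(\theta)(t)$. In other words, $f$ is the composition of $g$ with the $p$-th power map, which is precisely the identity $\varphi \circ (-)^F = (-)^F \circ \varphi$ recorded immediately above.

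Next I would differentiate. By the chain rule, $f'(t) = p\,t^{p-1} g'(t^p)$. For any $p$-th root of unity $\zeta$, we have $\zeta^p = 1$, hence
\[
  f'(\zeta) = p\,\zeta^{p-1}\, g'(1).
\]
Since $\theta \in \mathbb{Z}[X(T)]^W$, Lemma \ref{lem:deriv-vanish} gives $g'(1) = 0$, and therefore $f'(\zeta) = 0$.

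There is really no serious obstacle here — the only thing to notice is that the Frobenius twist on characters corresponds to the substitution $t \mapsto t^p$ under $\varphi$, which lets the chain rule turn the vanishing of $g'(1)$ into the vanishing of $f'$ at every $p$-th root of unity simultaneously.
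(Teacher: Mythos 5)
Your argument is correct and is essentially identical to the paper's own proof: both write $f(t) = g(t^p)$ with $g = \varphi(\theta)$, apply the chain rule to get $f'(\zeta) = p\zeta^{p-1}g'(1)$, and conclude by Lemma~\ref{lem:deriv-vanish}. The extra detail you supply on why $\varphi(\theta^F)(t) = g(t^p)$ is a harmless elaboration of the compatibility the paper records just before the corollary.
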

\begin{proof}
  If we let $g(t) = \varphi(\theta)$ so that$f(t) = g(t^p)$, then by the chain rule
  $f'(\zeta) = p\zeta^{p-1}g'(1)=0$.
\end{proof}
\begin{cor}
  For any $G$-module $M$ with Frobenius twist $M^{(1)}$,
  \[ 
    (\dim_t M^{(1)})'(\zeta) = 0.
  \]
\end{cor}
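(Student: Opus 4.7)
The plan is to observe that this corollary is an immediate specialization of the preceding corollary to the character of $M$.

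First, I would set $\theta := \operatorname{ch} M \in \mathbb{Z}[X(T)]$ and note that since $M$ is a $G$-module (not merely a $T$- or $B$-module), its formal character is $W$-invariant, i.e.\ $\theta \in \mathbb{Z}[X(T)]^W$. This follows from the standard fact that the weights of a rational $G$-module come in $W$-orbits with equal multiplicities (cf.\ \cite[II.2]{jan2003}).

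Next, I would invoke the compatibility identity $\operatorname{ch} M^{(1)} = (\operatorname{ch} M)^F = \theta^F$ recorded in the Notation section, and apply the specialization homomorphism $\varphi$ to obtain
\[
\dim_t M^{(1)} \;=\; \varphi(\operatorname{ch} M^{(1)}) \;=\; \varphi(\theta^F).
\]
Since $\theta^F$ is again $W$-invariant (because $w(p\lambda) = p\,w(\lambda)$, so the Frobenius twist preserves $W$-invariance of characters), the previous corollary applies directly to $\theta^F$ and yields $(\varphi(\theta^F))'(\zeta) = 0$ for every $p^{\mathrm{th}}$ root of unity $\zeta$, which is exactly the claim.

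There is no real obstacle here: the argument is a two-line consequence of the previous corollary, and the only subtle point to flag is that $W$-invariance of $\operatorname{ch} M$, which requires the hypothesis that $M$ is a $G$-module rather than merely a $T$-module, is essential — the conclusion fails in general for non $W$-invariant $\theta$.
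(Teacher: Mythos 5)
Your proposal is correct and is exactly the argument the paper intends: the corollary is stated without proof as an immediate consequence of the preceding one, obtained by taking $\theta = \operatorname{ch} M \in \mathbb{Z}[X(T)]^W$ (using that $M$ is a $G$-module) and applying $\varphi$ together with $\operatorname{ch} M^{(1)} = (\operatorname{ch} M)^F$. Your remark that $W$-invariance is the essential hypothesis is a correct and worthwhile observation.
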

\subsection{Quantum dimension}
The notion of the quantum dimension of a module
was first introduced by Parshall-Wang (cf. \cite[Section 2]{PW1993}), where they applied it to the theory of 
support varieties for quantum groups.

For any  primitive $p^{\text{th}}$ root of unity $\zeta$, let 
\begin{align*}
  \text{ev}_{\zeta}: Z[t,t^{-1}] &\rightarrow \mathbb{Z}[\zeta] \\
                     f(t)        &\mapsto     f(\zeta) 
\end{align*}
denote the evaluation map.
 Then $\text{ev}_{\zeta}$ induces an isomorphism
\[
  \frac{\mathbb{Z}[t,t^{-1}]}{\psi_p(t)\mathbb{Z}[t,t^{-1}]} \cong 
  \mathbb{Z}[\zeta],
\]
where $\psi_p(t) = 1 + t \cdots + t^{p-1}$ is the $p^{th}$ cyclotomic polynomial.

For any $T$-module $V$,  the \emph{quantum dimension} of $V$ is the
evaluation
\[
  \dim_{\zeta}V = \text{ev}_{\zeta}(\dim_t V)
\]

One can see that all of the properties for the generic dimension defined earlier carry over to the quantum dimension. 
In particular, for any $T$-module $V$
\[
  \dim_{\zeta} V^{(1)} = \dim_{\zeta^p}V = \dim_1 V = \dim V \in \mathbb{Z}
\]
so the quantum dimension of a twisted module $V^{(1)}$ is the same as its dimension as
a vector space. 
In general, for any $f(t) \in \mathbb{Z}[t,t^{-1}]$, $\text{ev}_{\zeta}(f(t)^F) = f(1)$ and
hence $\text{ev}_1= \text{ev}_{\zeta} \circ (-)^F.$

\section{Support varieties}\label{sec:supp_var}
\subsection{ }
Let $H$ be a finite group scheme over $k$. We can form the \emph{cohomology variety} 
$V_H=\text{max}(A)$ where
\[
  A = \begin{cases} 
    \bigoplus_{i=0}^{\infty}\text{Ext}_H^{i}(k,k) &\mbox{if $p$ is even}\\ 
    \bigoplus_{i=0}^{\infty}\text{Ext}_H^{2i}(k,k) &\mbox{if $p$ is odd}. 
  \end{cases}
\]
It is well known that $A$ is a commutative and finitely generated algebra (cf. \cite[Theorem 1.1]{fs}), and
hence $V_H$ is an affine $k$-variety. Now for any $H$-module $M$, the ring
\[
  \bigoplus_{i=0}^{\infty}\text{Ext}_H^i(M,M) 
\]
is naturally an $A$-module via the Yoneda action.  
The \emph{support variety} 
\[
V_H(M) := V(\text{ann}_A(\text{Ext}^*_{H}(M,M))) \subseteq V_H
\]
 is the zero locus of the annihilator
ideal. 
Support varieties satisfy a number of useful identities. 
\begin{prop}\label{prop:basic_properties}
  Let $M_1$, $M_2$, $M_3$ be arbitrary $H$-modules then
  \begin{enumerate}
    \item[(a)] $V_H(M_1 \oplus M_2) = V_H(M_1)\cup V_H(M_2)$
    \item[(b)] $V_H(M_1\otimes M_2) = V_H(M_1)\cap V_H(M_2)$
    \item[(c)] If $0 \rightarrow M_1 \rightarrow M_2 \rightarrow M_3 \rightarrow 0$ is exact then 
    \[ V_H(M_{i}) \subseteq V_H(M_j)\cup V_H(M_k)
    \]
    where $\{i,j,k\} = \{1,2,3 \}$. 
  \end{enumerate}
\end{prop}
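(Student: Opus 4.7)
The plan is to translate each set-theoretic assertion about $V_H(-)$ into a statement about annihilator ideals in $A$ and then invoke the correspondence between radical ideals and closed subvarieties of $V_H = \text{max}(A)$. A useful preliminary is that for any $H$-module $V$, the annihilator $\text{ann}_A(\text{Ext}^*_H(V,V))$ coincides with the kernel of the structure map $A \to \text{Ext}^*_H(V,V)$, $\zeta \mapsto \zeta \cdot \text{id}_V$; one direction is clear, and the other follows from the fact that every Ext class is obtained by composing with the identity. For part (a), I would use the natural $A$-module decomposition
\[
\text{Ext}^*_H(M_1 \oplus M_2,\, M_1 \oplus M_2) \;\cong\; \bigoplus_{i,j \in \{1,2\}} \text{Ext}^*_H(M_i, M_j),
\]
and observe that each cross term $\text{Ext}^*_H(M_i, M_j)$ is a module over $\text{Ext}^*_H(M_i, M_i)$ via pre-Yoneda composition and over $\text{Ext}^*_H(M_j, M_j)$ via post-composition. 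It follows that an element of $A$ annihilates the whole sum if and only if it annihilates $\text{Ext}^*_H(M_i, M_i)$ for both $i$, and taking zero loci converts this intersection of ideals into the desired union of varieties.

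For part (c), I would apply $\text{Hom}_H(M_\ell, -)$ and $\text{Hom}_H(-, M_\ell)$ to the given short exact sequence to obtain long exact sequences of $A$-modules linking the various Ext groups among the $M_\ell$. If some $\zeta \in A$ annihilates both $\text{Ext}^*_H(M_j, M_j)$ and $\text{Ext}^*_H(M_k, M_k)$, then the preliminary observation implies that $\zeta$ annihilates every cross term with $M_j$ as source or $M_k$ as target; a short chase through the connecting maps then shows that some power of $\zeta$ annihilates $\text{Ext}^*_H(M_i, M_i)$. Passing to radicals and applying the Nullstellensatz yields $V_H(M_i) \subseteq V_H(M_j) \cup V_H(M_k)$.

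For part (b), the forward inclusion $V_H(M_1 \otimes M_2) \subseteq V_H(M_1) \cap V_H(M_2)$ is the formal half: the structure map $A \to \text{Ext}^*_H(M_1 \otimes M_2,\, M_1 \otimes M_2)$ factors through $A \to \text{Ext}^*_H(M_\ell, M_\ell)$ for each $\ell$ via tensoring cocycles with $\text{id}_{M_{3-\ell}}$, so the kernel on the left contains both kernels on the right, giving the asserted variety containment. The reverse inclusion $V_H(M_1) \cap V_H(M_2) \subseteq V_H(M_1 \otimes M_2)$ is the genuine tensor product theorem for finite group schemes, which I would not attempt to reprove; instead I would appeal to the $\pi$-point / rank-variety description of $V_H(-)$ due to Friedlander and Pevtsova, in which tensor products of modules correspond to intersections of the associated support data by construction. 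This reverse inclusion is the main obstacle: it cannot be derived by purely formal manipulation of the Yoneda $A$-action, and its validity rests on the external geometric model for $V_H(-)$.
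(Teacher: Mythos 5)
Your proposal is correct, but it does considerably more work than the paper, which disposes of the entire proposition with a single citation to Friedlander--Pevtsova \cite[Theorem 5.6]{fp2005} and gives no argument at all. Your treatment of (a) and (c) is the standard Yoneda-theoretic proof and is sound: the key observation that $\text{ann}_A(\text{Ext}^*_H(V,V))$ equals the kernel of the structure map $A \to \text{Ext}^*_H(V,V)$, together with the fact that the $A$-action on any cross term $\text{Ext}^*_H(M,N)$ factors through either structure map, reduces (a) to $V(I\cap J)=V(I)\cup V(J)$ and reduces (c) to a two-step long-exact-sequence chase showing $\zeta^2$ (or $\zeta^3$, once the degree-zero part $\text{Hom}_H(M_i,M_i)$ is absorbed) lies in the annihilator; since $V(\text{ann})$ depends only on the radical, this suffices. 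For (b) you correctly isolate the one genuinely non-formal ingredient, the reverse inclusion $V_H(M_1)\cap V_H(M_2)\subseteq V_H(M_1\otimes M_2)$, and you are right that it cannot be extracted from the Yoneda action alone for a general finite group scheme $H$: it requires the $\pi$-point (or, for infinitesimal group schemes, the Suslin--Friedlander--Bendel rank variety) identification of $V_H(M)$, which is exactly the content of the theorem the paper cites. So the net effect of your approach is to replace a black-box citation by explicit proofs of (a), (c), and half of (b), while honestly flagging that the remaining half still rests on the external geometric model; the paper's approach buys brevity at the cost of opacity, and yours buys transparency about which parts are formal and which are deep.
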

\begin{proof}
  See \cite[Theorem 5.6]{fp2005}.
\end{proof}

\subsection{Computing the dimensions of support varieties}
If $H$ is a finite group scheme over $k$ and $M$ is any $H$-module, we define the \emph{complexity}
$c_H(M)$ of $M$ to be the smallest integer $d$ such that if
\[
  \cdots \rightarrow P_2\rightarrow P_1 \rightarrow P_0 \rightarrow M \rightarrow 0
\]
is a minimal projective resolution of $M$, then 
 there exists $C>0$ satisfying
\[
  \dim_k P_n \leq C\cdot n^{d-1}
\]
for all $n \geq 1$. 
It is known that $c_H(M) = \dim V_{H}(M)$, the Krull dimension of $V_H(M)$  (cf. ~\cite[Proposition 5.6(7)]{fp2005}).  

In the case where $H=G_1$, one can relate the complexity of a $G$-module $M$, regarded as a $G_1$-module,
to the divisibility of its generic dimension by $\psi_p(t) = 1 + t + \cdots + t^{p-1}$. 
More precisely, if $p$ is good and $\psi_p(t)^s \nmid \dim_t M $, then by 
\cite[Theorem 3.4.1b]{npv2002}
\[
  c_{G_1}(M) \geq |\Phi| - d(\Phi,p)-2(s-1)
\]
where
\[
  d(\Phi,p) = |\{\alpha \in \Phi\,\mid \,d_{\alpha}\langle\rho,\alpha^{\vee}\rangle \in p\mathbb{Z}\}|.
\]
In particular, if $b=|\Phi|-d(\Phi,p)-c_{G_1}(M)$, then $p^{b/2} \nmid \dim M$. 
This follows by setting $t=1$ and by making the observation that $\dim_1 M =\dim M $ and 
$\psi_p(1)=p$. 
Similar techniques were employed in \cite{uga2007} to deal with the $p$-bad case. 
%There is also an analogous result involving the ordinary dimension. 
\subsection{General results on support varieties of $G_1$-modules}
%A prime $p$ is called $p$-\emph{good} for $G$ if for any subroot system $\Phi' \subset \Phi$ we
%have that 
%\[
%  \mathbb{Z}\Phi/\mathbb{Z}\Phi'\otimes_\mathbb{Z}\mathbb{Z}/p\mathbb{Z} = 0.
%\]
By \cite[Theorem 5.2]{sfb}, the varieties
$V_{G_1}$ and  $\mathcal{N}_1(G)$ are naturally homeomorphic,
where 
\[
  \mathcal{N}_1(G) = \{x \in \mathfrak{g}:=\text{Lie}(G) \, \mid \, x^{[p]} = 0 \}
\]
is the $p$-restricted nullcone.  
 It is also known that for $p\geq h$, $\mathcal{N}_1(G) = \mathcal{N}(G)$, where $\mathcal{N}(G)$ is the nilpotent cone for $G$. Moreover, if $p>h$, then 
 $V_{G_1}$ and $\mathcal{N}(G)$ are naturally isomorphic as varieties (see \cite[Corollary 3.7]{aj}).
For simplicity, we will often denote $\mathcal{N}_1 := \mathcal{N}_1(G)$ and $\mathcal{N} :=\mathcal{N}(G)$.

For each subset $I \subseteq \Pi$, let  $U_I \subseteq G$ be the unipotent subgroup generated by the roots
 $\alpha \in (-\Phi^+)\backslash \Phi_I$
as in~\cite[II.1.8]{jan2003}.  We denote the corresponding Lie algebra by $\mathfrak{u}_I = \text{Lie}(U_I)$. 
It follows that 
$G\cdot \mathfrak{u}_I \subseteq \mathcal{N}$ is a closed subvariety whose dimension is given by
\[
  \dim G\cdot \mathfrak{u}_I = |\Phi|-|\Phi_I|.
\]

Now fix any $\lambda \in X(T)_+$ with $w(\Phi_{\lambda})=\Phi_I$ for some $w \in W$,
then it was shown in~\cite[Section 7.4.1]{npv2002} that
\[
  V_{G_1}(L(\lambda)) \subseteq G\cdot \mathfrak{u}_I.
\]
More generally, for any $y \in W_p$ with $y\cdot \lambda \in X(T)_+$
\[
  V_{G_1}(L(y\cdot \lambda)) \subseteq G\cdot \mathfrak{u}_I.
\]
Due to the lack of a suitable reference, the following lemma has been included. 
\begin{lem}\label{lem:linkage}
 If $M$ is a finite dimensional $G$-module such that every composition factor is of the form $L(y\cdot \lambda)$  
for some $y \in W_p$, then
$V_{G_1}(M) \subseteq G\cdot \mathfrak{u}_I$ where $w(\Phi_{\lambda}) = \Phi_I$ for some $w \in W$. 
\end{lem}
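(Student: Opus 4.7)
The plan is to reduce to the simple module case, which is already known from \cite[Section 7.4.1]{npv2002} as cited in the paragraph preceding the lemma, and then to induct on composition length using the short exact sequence property (c) of Proposition~\ref{prop:basic_properties}.

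First I would verify that the containment $V_{G_1}(L(\mu))\subseteq G\cdot\mathfrak{u}_I$ applies uniformly to every composition factor. Any composition factor of a rational $G$-module is of the form $L(\mu)$ with $\mu \in X(T)_+$. By hypothesis each such $\mu$ satisfies $\mu = y\cdot \lambda$ for some $y \in W_p$, so $\mu$ is in the $W_p$-linkage class of $\lambda$. Since linked weights have $W$-conjugate subroot systems (as noted in the section recalling that $\Phi_\mu = w'(\Phi_\lambda)$ for some $w' \in W$ whenever $\mu$ and $\lambda$ are linked), we may fix the single subset $I \subseteq \Pi$ with $w(\Phi_\lambda) = \Phi_I$ from the statement, and then the cited result of~\cite[Section 7.4.1]{npv2002} gives $V_{G_1}(L(\mu))\subseteq G\cdot \mathfrak{u}_I$ for every composition factor $L(\mu)$ of $M$.

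Next I would induct on the composition length $\ell(M)$. The base case $\ell(M)=1$ is precisely the simple-module statement just recalled. For the inductive step, pick a nonzero proper submodule $M_1 \subset M$ (for instance a simple submodule) and form the short exact sequence
\[
0 \longrightarrow M_1 \longrightarrow M \longrightarrow M_2 \longrightarrow 0,
\]
where $M_2 = M/M_1$. Both $M_1$ and $M_2$ have strictly smaller composition length, and their composition factors form subsets of the composition factors of $M$, so they are again of the linked form $L(y\cdot \lambda)$. By the inductive hypothesis, $V_{G_1}(M_1) \subseteq G\cdot \mathfrak{u}_I$ and $V_{G_1}(M_2) \subseteq G\cdot \mathfrak{u}_I$. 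By Proposition~\ref{prop:basic_properties}(c),
\[
V_{G_1}(M)\subseteq V_{G_1}(M_1)\cup V_{G_1}(M_2) \subseteq G\cdot \mathfrak{u}_I,
\]
completing the induction.

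There is essentially no obstacle; the only subtlety worth pointing out explicitly is the $W$-conjugacy of the subroot systems $\Phi_\mu$ attached to linked weights, which ensures that a single orbit closure $G\cdot \mathfrak{u}_I$ serves as the containing variety for every composition factor simultaneously. Once that is observed, the argument is purely formal from the standard tensor/short exact sequence behavior of $V_{G_1}(-)$.
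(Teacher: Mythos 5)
Your proposal is correct and follows essentially the same route as the paper: both arguments reduce to the simple-module case from \cite[Section 7.4.1]{npv2002} (using that linked weights give $W$-conjugate subroot systems) and then induct via Proposition~\ref{prop:basic_properties}(c); the paper inducts along a fixed composition series while you induct on composition length, which is an immaterial difference.
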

\begin{proof}
There exists a filtration 
\[
0 = M_0 \subsetneq M_1 \subsetneq \dots \subsetneq M_r = M 
\]
with $M_i/M_{i-1} \cong L(y_i\cdot \lambda)$ for $i=1,\dots, r$. 
This gives us exact sequences 
\[
0 \rightarrow M_{i-1} \rightarrow M_i \rightarrow L(y_i\cdot \lambda) \rightarrow 0,
\]
thus by Proposition ~\ref{prop:basic_properties} and the remark preceding this lemma 
\[
  V_{G_1}(M_i) \subseteq V_{G_1}(M_{i-1})\cup G\cdot \mathfrak{u}_I.
  \]
 The result now follows from induction on $i$. 
 
\end{proof}
Next we state a result which generalizes \cite[Theorem 6.2.1]{npv2002}.
\begin{prop}\label{prop:npv_result}
  Let $\lambda \in X(T)$ be any weight with $w(\Phi_{\lambda})=\Phi_I$ for some $w \in W$,
  and suppose $\bigoplus_{i\geq 0}H^i(\lambda)\neq 0$, then there exists $j\geq 0$ such that 
  \[
    V_{G_1}(H^j(\lambda))=G\cdot \mathfrak{u}_I.
  \]
\end{prop}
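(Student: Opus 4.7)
The plan is to establish the two inclusions $V_{G_1}(H^j(\lambda)) \subseteq G\cdot \mathfrak{u}_I$ and $V_{G_1}(H^j(\lambda)) \supseteq G\cdot \mathfrak{u}_I$ separately. The first inclusion follows uniformly in $j$ from Lemma~\ref{lem:linkage}: by the strong linkage principle every composition factor of $H^j(\lambda)$ is of the form $L(y\cdot \lambda)$ for some $y\in W_p$, which is precisely the input that lemma requires. The harder direction is to produce at least one index $j$ for which the reverse inclusion also holds.

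For that lower bound, my strategy is to combine the Euler characteristic of $\lambda$ with the divisibility-complexity bound $c_{G_1}(M)\ge|\Phi|-d(\Phi,p)-2(s-1)$ recalled in Section~\ref{sec:supp_var}. First observe that the hypothesis $\bigoplus_i H^i(\lambda)\ne 0$ forces $\lambda+\rho$ to be regular under the \emph{linear} (non-dot) action of $W$; otherwise the rank-one vanishing $H^{\bullet}(\mathbb{P}^1,\mathcal{O}(-1))=0$, pushed up through the Leray spectral sequence for $G/B\to G/P_\alpha$ after $W$-conjugating the stabilizing reflection to a simple one, would force every $H^i(\lambda)$ to vanish. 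Granting this, let $y\in W$ be the unique element with $y\cdot \lambda\in X(T)_+$. The Euler characteristic identity $\sum_j(-1)^j\,\text{ch}\,H^j(\lambda)=\chi(\lambda)$ together with Weyl's character formula then gives
\[
  \sum_{j\ge 0}(-1)^j\dim_t H^j(\lambda) \;=\; (-1)^{\ell(y)}\dim_t H^0(y\cdot \lambda).
\]

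Weyl's generic dimension formula shows that the right-hand side vanishes at any primitive $p$-th root of unity $\zeta$ to order exactly $v:=(|\Phi_\lambda|-d(\Phi,p))/2$, using that $\Phi_{y\cdot \lambda}$ is $W$-conjugate to $\Phi_\lambda$ and hence to $\Phi_I$. Consequently at least one term on the left must also vanish at $\zeta$ to order at most $v$, yielding an index $j$ with $\psi_p(t)^{v+1}\nmid \dim_t H^j(\lambda)$. Setting $s=v+1$ in the NPV complexity bound produces
\[
  c_{G_1}(H^j(\lambda)) \;\ge\; |\Phi|-d(\Phi,p)-2v \;=\; |\Phi|-|\Phi_I| \;=\; \dim G\cdot \mathfrak{u}_I.
\]
Combined with the already-established upper bound, and with the irreducibility of the orbit closure $G\cdot \mathfrak{u}_I$ (so that a closed subvariety of full dimension must fill it), this forces $V_{G_1}(H^j(\lambda))=G\cdot \mathfrak{u}_I$.

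The main obstacle is the preliminary $W$-regularity step. In characteristic zero this is immediate from Bott-Borel-Weil, but in characteristic $p$ the usual BBW argument fails and one must instead verify by hand that the stabilizer of $\lambda+\rho$ inside $W$ can be conjugated to a standard parabolic, so that the $\mathbb{P}^1$-vanishing from a single simple root suffices. Once that reduction is in place, everything else is a mechanical assembly of Weyl's generic dimension formula, the Euler characteristic identity, and the complexity bound already recorded in Section~\ref{sec:supp_var}.
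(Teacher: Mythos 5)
Your proposal is correct and follows essentially the same route as the paper: the containment $V_{G_1}(H^i(\lambda))\subseteq G\cdot \mathfrak{u}_I$ via strong linkage and Lemma~\ref{lem:linkage}, and the reverse inclusion for some $j$ by computing the order of vanishing of the generic Euler characteristic at $\zeta$ via Weyl's generic dimension formula and feeding the resulting non-divisibility into the NPV complexity bound. The only differences are cosmetic --- you argue directly where the paper argues by contradiction, and you make explicit the $W$-regularity of $\lambda+\rho$ (needed for the Euler characteristic to be nonzero), which the paper leaves implicit in the hypothesis $\bigoplus_i H^i(\lambda)\neq 0$.
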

\begin{proof}
  By the Strong Linkage Principle (cf. \cite[Proposition II.6.13]{jan2003}), every composition factor of $H^i(\lambda)$ 
  is of the form $L(y\cdot \lambda)$ where $y\cdot \lambda \in X(T)_+$ and $y \in W_p$. 
 It follows from Lemma~\ref{lem:linkage} that 
 \[
 V_{G_1}(H^i(\lambda))\subseteq G\cdot \mathfrak{u}_I
 \]
 for each $i\geq 0$.   
  The (generic) Euler characteristic
  \[
    D_t(\lambda) = \sum_{i\geq 0}(-1)^i\text{dim}_t H^i(\lambda)
  \]
  is given by Weyl's generic dimension formula.  
  One can then verify that $s=\frac{1}{2}(|\Phi_I|-d(\Phi,p))$ is the multiplicity
  of $\psi_p(t)$ as a divisor of $D_t(\lambda)$. 

  Now suppose that for all $i$,   $V_{G_1}(H^i(\lambda))\subsetneq G\cdot \mathfrak{u}_I$.
  This would imply that $\psi_p(t)^{s+1} \mid \dim_t H^i(\lambda)$ for all $i$  and hence
  $\psi_p(t)^{s+1} \mid D_t(\lambda)$, a contradiction.
\end{proof}

It may also be useful to mention that in \cite[Theorem 3.3]{DNP} the support varieties $V_{G_1}(L(\lambda))$
were determined for all the simple modules $L(\lambda)$ whenever the Lusztig conjecture holds.
Recently some work has been done in studying the varieties  $V_{G_r}(L(\lambda))$ for $\lambda \in X_r(T)$
(cf. \cite{Sobaje}).

\section{Character formulas for line bundle cohomology groups when $G=SL_3(k)$}\label{sec:dim_formulas}
\subsection{ }
From now on we shall assume that $G=SL_3(k)$. 
In this section we will present recursive character formulas for the sheaf cohomology groups
$H^i(\lambda)$ where  $\lambda \in X(T)$ is any weight.
If we apply the map
\begin{align*}
\varphi: \mathbb{Z}[X(T)] &\rightarrow \mathbb{Z}[t,t^{-1}]
%\text{ev}_{\zeta}\circ\varphi: \mathbb{Z}[X(T)] &\rightarrow \mathbb{Z}[\zeta]
\end{align*}
defined in Section~\ref{sec:gen_dim}
to the character formulas,  we will also get recursive expressions for the generic dimensions of the modules $H^i(\lambda)$. 
The generic dimension formulas will be used in 
Sections~\ref{sec:reg_case},~\ref{sec:sreg_case} to determine
the multiplicity of $\psi_p(t)$ as a factor of $\dim_tH^i(\lambda)$.
Therefore, by the results of Section~\ref{sec:supp_var}, we will be able to compute
$\dim V_{G_1}(H^i(\lambda))$. 
  
  We begin by  introducing some additional notation.  
Let $\Phi$ be the root system of type $A_2$ with basis $\Pi=\{\alpha,\beta\}$ and let
$\omega_{\alpha}$, $\omega_{\beta}$ be the corresponding fundamental weights.
The weights in $X(T) \cong \mathbb{Z}^2$ will be given by coordinates
 $(r,s) = r\omega_{\alpha} + s\omega_{\beta}$, for example, $\alpha = (2,-1)$ and $\beta = (-1,2)$.
Finally, we define the \emph{fundamental line} to be the collection of all weights of the form $(r,-r-1)$ for $r \in \mathbb{Z}$. 
In other words, it is unique line in $X(T)$ which  passes through the points $w_{\alpha}-\rho = (0,-1)$ and $w_{\beta} - \rho = (-1,0)$. 

Let $N(\alpha)$ and $N(\beta)$ be the two unique indecomposable $B$-modules such that 
\begin{align*}
  \text{ch}\,N(\alpha) &= e(-\alpha) + e(0) \\
  \text{ch}\,N(\beta) &= e(-\beta) + e(0).
\end{align*}
For any weight $(r,s) \in X(T)$, let 
\begin{align*}
  \chi_p(r,s) &= \text{ch}\,L(r,s)\\
  \chi^i(r,s) &= \text{ch}\, H^i(r,s) \\
  \chi^i_{\alpha}(r,s) &= \text{ch}\,H^i(N(\alpha)\otimes (r,s)) \\
  \chi^i_{\beta}(r,s) &= \text{ch}\,H^i(N(\beta)\otimes (r,s)).
\end{align*}
The ordinary dimensions are denoted by
\begin{align*}
  \delta_p(r,s) &= \dim L(r,s)\\
  \delta^i(r,s) &= \dim H^i(r,s) \\
  \delta^i_{\alpha}(r,s) &= \dim H^i(N(\alpha)\otimes (r,s)) \\
  \delta^i_{\beta}(r,s) &= \dim H^i(N(\beta)\otimes (r,s)).
\end{align*}
Similarly, we denote the generic dimensions by
\begin{align*}
  D_p(r,s) &= \dim_t L(r,s)\\
  D^i_t(r,s) &= \dim_{t}H^i(r,s) \\
  D^i_{\alpha,t}(r,s) &= \dim_{t} H^i(N(\alpha)\otimes (r,s)) \\
  D^i_{\beta,t}(r,s) &= \dim_{t} H^i(N(\beta)\otimes (r,s)).
\end{align*}

%The specialization of the generic dimension at a primitive $p^{th}$ root of unity $\zeta$ weill 
%be denoted by $D^i_{\zeta}(r,s)$.
In \cite[Lemma 2.1]{DONK2007}, expansions of $\chi^i(r,s)$, $\chi_{\alpha}^i(r,s)$ and $\chi_{\beta}^i(r,s)$ are expressed in terms of the infinitesimal invariants of certain modules, which are computed in  \cite[Sections 2.2-2.5]{DONK2007}. With these calculations in hand, one can immediately derive recursive expansion formulas, which we will present in the next subsection. 
\begin{rem}
Recursive formulas are also given in \cite[Sections 4-6]{DONK2007}, but these formulas contain numerous typographical errors, and fail to agree with the calculations in \cite[Sections 2.2-2.5]{DONK2007}. The formulas given below were obtained by imitating Donkin's proofs, but with greater care in avoiding typos. The references to the corresponding formulas in \cite{DONK2007} are also stated. 
\end{rem}
%After considering some base cases in 
%\cite[Section 3]{DONK2007}, it is strait forward to derive the recursive expansion formulas. Unfortunatel
% the remainder of the paper is dedicated to explicitly transcribing the character formulas. Unfortunately, numerous mistakes were made in this portion of the paper, and so the formulas stated in 
%\cite[Section 4-6]{DONK2007} are unreliable. In particular, many of them fail to agree with the $U_1$-invariant calculations.
%When $\text{char}(k)=2$, the formulas are given in  \cite[Section 4]{DONK2007}, for the $p=3$ case 
%(cf. \cite[Section 5]{DONK2007}) and for the $p\geq 5$ case  (cf. \cite[Section 6]{DONK2007}). 
% First, we observe that the formulas for  $p=3$ and $p\geq 5$ 
%actually coincide for all weights in $X(T)$, once the errors are corrected.  

%Since our generic dimension calculations will require correct formulas 
%
%Furthermore,
%the formulas for the weights $(r,s) \in X(T)$ for $p=2$ given in \cite[Section 4]{DONK2007} coincide with the formulas for
%subregular weights in the $p\geq 3$ case. 
%Thus, any theorem proved using the recursive formulas for regular weights should hold for all $p\geq 3$ and 
% any theorem proved using the formulas for subregular weights should hold for all primes. 

\subsection{ }
We begin with the case when $\lambda = (a+pr, b+ps)$ is a regular weight.
The following proposition is a combination of \cite[Lemma 5.5 and Proposition 6.2]{DONK2007}.
\begin{prop}\label{prop:reg-weight}
    Let $p \geq 3$, and let $\lambda = (a+pr,b+ps)$, with $(a,b) \in X_1(T)$, be an arbitrary regular weight, then 
    if $a + b <p-2$
    \begin{align*}
    \chi^i(a+pr, b + ps) &= \chi_p(a,b)\chi^i(r,s)^F + [\chi_p(p-2-b,p-2-a)+\chi_p(a,b)]\chi^i(r-1,s-1)^F \\
                                  &\quad + \chi_p(a+b+1,p-2-b)\chi^i(r,s-1)^F + \chi_p(p-2-a,a+b+1)\chi^i(r-1,s)^F \\
                                  &\quad + \chi_p(b,p-3-a-b)\chi^i_{\alpha}(r,s-1)^F + 
                                          \chi_p(p-3-a-b,a)\chi^i_{\beta}(r-1,s)^F.
    \end{align*}
     If $a + b > p-2$, then 
    \begin{align*}
    \chi^i(a+pr, b + ps) &= [\chi_p(a,b)+\chi_p(p-2-b,p-2-a)]\chi^i(r,s)^F \\ 
    				             & \quad+ \chi_p(p-2-b,p-2-a)\chi^i(r-1,s-1)^F \\
                          &        \quad + \chi_p(2p-3-a-b,a)\chi^i(r,s-1)^F + \chi_p(b,2p-3-a-b)\chi^i(r-1,s)^F \\
                          &        \quad + \chi_p(a+b-p+1,p-2-b)\chi^i_{\alpha}(r+1,s-1)^F\\
                          &        \quad + \chi_p(p-2-a,a+b-p+1)\chi^i_{\beta}(r-1,s+1)^F.
     \end{align*}
   \end{prop}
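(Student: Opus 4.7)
The plan is to follow Donkin's argument from \cite[Sections 4--6]{DONK2007}, but to redo the bookkeeping from scratch using the raw infinitesimal-invariant computations of \cite[Lemma 2.1]{DONK2007} together with \cite[Sections 2.2--2.5]{DONK2007}, since the published recursive formulas contain typographical errors that we cannot simply quote.

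First I would write $\lambda = \lambda_0 + p\lambda_1$ with $\lambda_0 = (a,b) \in X_1(T)$ and $\lambda_1 = (r,s)$. Using the spectral sequence
\[
  H^i\bigl(G/G_1B,\, H^j(G_1B/B, \mathscr{L}(\lambda))\bigr) \Rightarrow H^{i+j}(G/B,\mathscr{L}(\lambda)),
\]
one reduces to understanding the $G_1B$-module $H^0(G_1B/B, \lambda_0) \otimes p\lambda_1$ and then computing its image under the induction to $G$. Since $G/G_1B \cong G^{(1)}/B^{(1)}$, the outer step introduces a Frobenius twist, which accounts for all of the $(-)^F$ factors in the stated formulas. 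The infinitesimal invariants computed in \cite[Sections 2.2--2.5]{DONK2007} describe, at the $B$-module level, a filtration of $H^0(G_1B/B, \lambda_0)$ whose graded pieces are of the form $L(\mu)\otimes p\nu$ for certain weights $\mu$ (with possible twists by the indecomposable modules $N(\alpha)$, $N(\beta)$ in the subregular cases). Applying $\chi^i$ termwise to this filtration and pushing through the spectral sequence yields precisely a recursive expression of $\chi^i(\lambda)$ in terms of $\chi^i(r',s')^F$, $\chi^i_\alpha(r',s')^F$, $\chi^i_\beta(r',s')^F$ with appropriate shifts and simple-character coefficients.

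The split into the two cases $a+b < p-2$ and $a+b > p-2$ (note that $a + b = p-2$ is excluded because it forces $\lambda$ onto the fundamental line, contrary to the regularity of $\lambda$) comes from whether $\lambda_0$ lies in the lower or upper $p$-alcove of $X_1(T)$. This determines which intermediate simple modules $L(p-2-b,p-2-a)$, $L(2p-3-a-b,a)$, etc.\ can appear as composition factors of the associated $B$-module, and consequently which shifts $(r,s)$, $(r-1,s)$, $(r,s-1)$, $(r-1,s-1)$, $(r+1,s-1)$, $(r-1,s+1)$ of the recursive terms occur. Concretely, reflecting $\lambda_0$ by the affine reflection through the upper wall of the lower alcove flips the case, and symmetry under $(\alpha,\beta)$-swap dictates the pairing between $\chi^i_\alpha$-terms and $\chi^i_\beta$-terms.

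The principal obstacle will be combinatorial accounting, not any deep new idea: I must verify that every composition factor in Donkin's filtration appears in my expansion with the correct multiplicity, the correct twist, and the correct Frobenius shift, and that no factor is dropped or double-counted. To guard against the same typos Donkin made, I would cross-check the final formula in two ways. First, I would specialize in small enough cases (for instance $(r,s)=(0,0)$ where the $\chi^i(r-1,s-1)^F$ terms drop out or give Steinberg contributions) and compare against direct computations of $\chi^i(\lambda)$. Second, I would pass through $\varphi$ to generic dimensions and verify that the resulting identity is compatible with Weyl's generic dimension formula when summed alternately over $i$ to form the Euler characteristic $D_t(\lambda)$, as that alternating sum must reproduce the closed expression used in Proposition~\ref{prop:npv_result}. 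Any discrepancy would immediately flag a mistake in a coefficient.
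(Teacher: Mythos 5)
Your proposal is correct and takes essentially the same route as the paper, which likewise derives these formulas by rerunning Donkin's argument through the $G_1B$-induction/Frobenius-twist machinery and the infinitesimal-invariant computations of \cite[Lemma 2.1 and Sections 2.2--2.5]{DONK2007}, with the case split governed by which restricted alcove $(a,b)$ lies in. One small correction: $a+b=p-2$ is excluded because it makes $\lambda$ subregular (it places $\lambda+\rho$ on an affine wall for $\alpha+\beta$), not because it places $\lambda$ on the fundamental line, which is the single line $r+s=-1$.
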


Likewise, we get recursive formulas for the $N(\alpha)\otimes \lambda$ bundles when $\lambda$ is regular
(cf. \cite[Lemma 5.2.6 and Lemma 6.4.3]{DONK2007}).
\begin{prop}
  Let $p \geq 3$  and $(a,b) \in X_1(T)$, then if $a \neq 0$
  \begin{equation*}
    \chi^i_{\alpha}(a + pr, b +ps) = \chi^i(a+pr, b+pr) + \chi^i(a-2 +pr, b+1+ps)
  \end{equation*}
  otherwise, when $a=0$ 
  \begin{align*}  
    \chi^i_{\alpha}(pr, b + ps) &= 2\chi_p(p-2-b,p-2)\chi^i(r-1,s-1)^F + 2\chi_p(b,p-3-b)\chi^i_{\alpha}(r,s-1)^F \\
                                 & \quad + \chi_p(p-3-b,0)[\chi^i(r-1,s)+\chi(0,1)\chi^i(r-1,s-1)]^F\\
                                 & \quad + 2\chi_p(p-2,b+1)\chi^i(r-1,s)^F \\
				            &\quad + \chi_p(b+1,p-2-b)\chi^i_{\alpha}(r,s-1)^F + \chi_p(0,b)[\chi^i(r-1,s-1)+\chi^i(r-1,s)]^F.
  \end{align*}
\end{prop}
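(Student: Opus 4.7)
The plan is to mimic the arguments of \cite[Lemma 5.2.6 and Lemma 6.4.3]{DONK2007}, while carefully correcting the typographical errors flagged in the remark preceding the proposition. The basic input is the defining non-split short exact sequence of $B$-modules
\begin{equation*}
0 \longrightarrow k \longrightarrow N(\alpha) \longrightarrow k_{-\alpha} \longrightarrow 0,
\end{equation*}
which upon tensoring with $\lambda = (a+pr, b+ps)$ yields
\begin{equation*}
0 \longrightarrow \lambda \longrightarrow N(\alpha) \otimes \lambda \longrightarrow \lambda - \alpha \longrightarrow 0,
\end{equation*}
where $\lambda - \alpha = (a-2+pr, b+1+ps)$. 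The resulting long exact sequence in sheaf cohomology on $G/B$ is the main tool in both subcases.

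For $a \neq 0$, I would show that every connecting homomorphism $H^i(\lambda - \alpha) \to H^{i+1}(\lambda)$ in this long exact sequence vanishes, so it breaks into short exact sequences of $T$-modules and the character identity $\chi^i_{\alpha}(\lambda) = \chi^i(\lambda) + \chi^i(\lambda - \alpha)$ follows. The vanishing reduces, via the parabolic projection $\pi \colon G/B \to G/P_\alpha$ and the Leray spectral sequence, to a fibrewise statement on $P_\alpha / B \simeq \mathbb{P}^1$: when $a \not\equiv 0 \pmod{p}$, the $SL_2$-version of the extension becomes trivial by a Bott-type vanishing on each fibre, so $R^j\pi_*$ applied to the sequence is termwise split, and Leray then gives the claimed additive formula.

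For the case $a = 0$ the fibrewise splitting fails because $\lambda$ now lies on a reflection hyperplane for $\alpha$. Instead I would follow Donkin's method: compute the $G_1$-invariants $H^0(G_1, N(\alpha) \otimes \lambda)$ using the infinitesimal invariant data assembled in \cite[Sections 2.2--2.5]{DONK2007}, where the answer decomposes, after Frobenius untwisting, into a sum indexed by the six highest weights $(p-2-b, p-2)$, $(b, p-3-b)$, $(p-3-b, 0)$, $(p-2, b+1)$, $(b+1, p-2-b)$, and $(0,b)$ that appear in the statement. Applying the Grothendieck spectral sequence for the composition $H^i(G/B, -) = H^i(G/G_1B, H^0(G_1, -))$ then assembles these pieces into the six-term expression.

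The main obstacle is the bookkeeping in this second case: pinning down the multiplicities (in particular the factors of $2$ on the first, second, and fourth terms) and accounting for the bracketed composite expressions $[\chi^i(r-1,s) + \chi_p(0,1)\chi^i(r-1,s-1)]^F$ and $[\chi^i(r-1,s-1) + \chi^i(r-1,s)]^F$, which arise from $G_1$-modules with a two-step Loewy structure. As a sanity check I would specialize to $p = 3$ and small $(r,s)$, compare against direct calculations of $H^i(N(\alpha) \otimes \lambda)$, and verify consistency with Proposition~\ref{prop:reg-weight}.
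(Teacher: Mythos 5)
Your overall strategy is the right one, and it is in fact the only "proof" the paper itself offers: the paper does not argue this proposition directly but derives it from \cite[Lemma 2.1]{DONK2007} together with the infinitesimal-invariant computations of \cite[Sections 2.2--2.5]{DONK2007}, exactly as you propose for the $a=0$ case. So in spirit the two approaches coincide. That said, two points deserve attention.

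First, your defining extension of $N(\alpha)$ is written in the wrong direction. With $B$ the \emph{negative} Borel (as in this paper), one has $\operatorname{Ext}^1_B(k, k_{-\alpha}) = H^1(B,-\alpha) \neq 0$ while $\operatorname{Ext}^1_B(k_{-\alpha}, k) = H^1(B,\alpha) = 0$, so the unique non-split extension is $0 \to k_{-\alpha} \to N(\alpha) \to k \to 0$; the sequence $0 \to k \to N(\alpha) \to k_{-\alpha} \to 0$ as you wrote it necessarily splits and cannot define the indecomposable module. After tensoring with $\lambda$ the subobject is therefore $\lambda - \alpha$ and the quotient is $\lambda$, which flips which connecting homomorphisms $H^i(\lambda) \to H^{i+1}(\lambda-\alpha)$ you must kill and which fibrewise rank-two $P_\alpha$-module must split. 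The character identity is insensitive to the direction, but your Leray/fibrewise argument is not, and you never actually verify the congruence condition under which the $\mathbb{P}^1$-fibre splitting holds (it must come out to exactly $a \neq 0$, i.e.\ $\langle \lambda,\alpha^\vee\rangle \not\equiv 0 \bmod p$); note also that $N(\alpha)$ does not extend to a $P_\alpha$-module, so the tensor identity cannot be invoked naively here.

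Second, and more substantively: for $a = 0$ your proposal defers precisely the step that constitutes the entire content of the proposition. The point of the remark preceding these propositions is that Donkin's own assembled recursive formulas contain errors and must be re-derived by carefully recombining the data of \cite[Sections 2.2--2.5]{DONK2007}; saying that one should "assemble these pieces" while flagging the multiplicities of $2$ and the two-step Loewy terms as "the main obstacle" leaves the claim unverified. (Your proposed consistency checks against small cases and against Proposition~\ref{prop:reg-weight} are the right instinct, and in fairness the paper does not print this bookkeeping either.) Also, the functor identity you quote, $H^i(G/B,-) = H^i(G/G_1B, H^0(G_1,-))$, is garbled: the relevant composition is $\operatorname{ind}_{G_1B}^G \circ \operatorname{ind}_B^{G_1B}$ with its attendant spectral sequence, and the infinitesimal invariants are taken with respect to $B_1$ (or $G_1T$), not $G_1$, applied to a $B$-module.
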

Now we consider the case where $\lambda$ is subregular. The following is a combination of
\cite[Lemma 4.2.1, Lemma 5.3.5 and Lemma 6.1.3]{DONK2007}.
\begin{prop}\label{prop:sreg_formulas}
  Let $p \geq 2$ and $0 \leq a < p-1$. Then 
 \begin{equation*}
   \begin{aligned}
    \chi^i(p-1+pr, a + ps) &=  \chi(p-1,a)\chi^i(r,s)^F + \chi(p-2-a,p-1)\chi^i(r,s-1)^F \\
                                  & \quad + \chi(a,p-2-a)\chi^i_{\alpha}(r+1,s-1)^F \\
    \chi^i(p-2-a + pr, p-1 + ps) &= \chi(p-2-a,p-1)\chi^i(r,s)^F + \chi(p-1,a)\chi^i(r-1,s)^F + \\
                                  &\quad + \chi^i(a,p-2-a)\chi^i_{\beta}(r-1,s+1)^F \\
    \chi^i(a+pr, p-2-a + ps) &= \chi(a,p-2-a)\chi^i(r,s)^F + \chi(p-1,a)\chi^i(r,s-1)^F \\
   			 &\quad+ \chi(p-2-a,p-1)\chi^i(r-1,s)^F + \chi(a,p-2-a)\chi^i(r-1,s-1)^F.
   \end{aligned}
   \end{equation*}
%   This gives us recursive character formulas  $\chi^i(\lambda)$ for any subregular $\lambda \in X(T)$ 
%   which is neither ' nor antidominant. 
 \end{prop}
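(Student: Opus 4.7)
The plan is to apply \cite[Lemma 2.1]{DONK2007}, which expresses the character $\chi^i(\lambda)$ of a line bundle cohomology group as a sum of induced-module characters arising from the $B_1$-infinitesimal invariants of the $B$-module $\lambda$, each paired with an appropriate Frobenius twist. Write $\lambda = \lambda_0 + p\lambda_1$ with $\lambda_0 \in X_1(T)$. In the three cases of the proposition the restricted part $\lambda_0$ lies on one of the three walls of the fundamental alcove: $\lambda_0 = (p-1,a)$ satisfies $\langle\lambda_0+\rho,\alpha^\vee\rangle = p$; $\lambda_0 = (p-2-a,p-1)$ satisfies $\langle\lambda_0+\rho,\beta^\vee\rangle = p$; and $\lambda_0 = (a,p-2-a)$ satisfies $\langle\lambda_0+\rho,(\alpha+\beta)^\vee\rangle = p$. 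These are exactly the three types of subregular restricted weights in type $A_2$, and the wall on which $\lambda_0$ sits will determine which of the three formulas applies.

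First, I would invoke the infinitesimal-invariant calculations carried out in \cite[Sections 2.2--2.5]{DONK2007}. For each wall, Donkin produces an explicit $B/B_1$-module structure on the invariants which decomposes into a small number of summands: a restricted-simple piece of highest weight $\lambda_0$, plus pieces indexed by the neighboring restricted simples among $(p-1,a)$, $(p-2-a,p-1)$ and $(a,p-2-a)$, together with (in Cases~1 and~2) two-dimensional indecomposable $B$-modules of the form $N(\alpha)\otimes\mu$ or $N(\beta)\otimes\mu$, which encode the non-semisimplicity of the $B_1$-cohomology attached to the respective wall. In Case~3, corresponding to the wall orthogonal to the highest root, only restricted-simple summands appear, which is why the third formula contains no $\chi^i_\alpha$ or $\chi^i_\beta$ term.

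Second, I would substitute these summand decompositions into the expansion supplied by \cite[Lemma 2.1]{DONK2007}. The restricted-simple summands produce the terms $\chi(p-1,a)\chi^i(\cdot,\cdot)^F$, $\chi(p-2-a,p-1)\chi^i(\cdot,\cdot)^F$, and $\chi(a,p-2-a)\chi^i(\cdot,\cdot)^F$ appearing in the three stated identities, while the $N(\alpha)$- and $N(\beta)$-summands produce the mixed terms $\chi(a,p-2-a)\chi^i_\alpha(r+1,s-1)^F$ and $\chi(a,p-2-a)\chi^i_\beta(r-1,s+1)^F$ in Cases~1 and~2 respectively. The shifts of the Frobenius-twist arguments $(r,s)$ by $\pm 1$ in each coordinate are read off from the $T$-weights of the invariant summands relative to the base twist $p\lambda_1$: a contribution of internal weight $p\mu$ in the invariants re-expresses, upon pulling the Frobenius twist outside, as a shift of $\lambda_1$ by $\mu$.

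The main obstacle is careful bookkeeping rather than any substantive new idea, because, as the author points out in the remark preceding the proposition, the recursive formulas of \cite[Sections 4--6]{DONK2007} contain typographical errors and disagree with the invariant computations in \cite[Sections 2.2--2.5]{DONK2007}. The heart of the argument therefore consists of re-deriving the recursion directly from the invariant computations, paying meticulous attention to signs, multiplicities, and the $T$-weight accounting that determines the $(r,s)$-shifts. Each of the three cases reduces to a single direct substitution, and the corrected coefficients and Frobenius-twist indices stated in the proposition emerge unambiguously once the invariant decomposition for the appropriate wall is combined with \cite[Lemma 2.1]{DONK2007}.
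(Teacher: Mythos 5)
Your proposal is correct and follows essentially the same route as the paper: the author likewise derives these identities by combining Donkin's expansion \cite[Lemma 2.1]{DONK2007} with the infinitesimal-invariant computations of \cite[Sections 2.2--2.5]{DONK2007}, imitating Donkin's own proofs of \cite[Lemma 4.2.1, Lemma 5.3.5, Lemma 6.1.3]{DONK2007} while correcting the typographical errors. Your identification of the three cases with the three walls of the fundamental alcove and of the source of the $N(\alpha)$/$N(\beta)$ terms and the $(r,s)$-shifts matches the intended argument.
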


 The character formulas for the $N(\alpha)\otimes\lambda$ bundles when $\lambda$ is subregular are now given
 (cf.  \cite[Lemma 4.2.2, Lemma 5.3.6 and Lemma 6.3.1]{DONK2007}).
 However,  in \cite[Lemma 6.3.1]{DONK2007}, the formula for $\chi^i_{\alpha}(pr,p-1+ps)$ was
 accidentally omitted. 
 \begin{prop}
   Let $p \geq 2$. For $0 \leq a < p-1$,
   \begin{equation*}
     \chi^i_{\alpha}(p-1+pr, a+ps) = \chi^i(p-1 +pr, a +ps) + \chi^i(p-3 +pr, a+1 +ps).
   \end{equation*}
   For $0 \leq a < p-2$,
   \begin{equation*}
     \chi^i_{\alpha}(p-2-a+pr, p-1+ps) = \chi^i(p-2-a+pr,p-1+ps) + \chi^i(p-4-a +pr, p(s+1))
   \end{equation*}
   and when $a = p-2$,
   \begin{align*}
     \chi^i_{\alpha}(pr, p-1 + ps) &= 2\chi(p-1,p-2)\chi^i(r-1,s)^F + \chi(p-2,0)[\chi^i(r-1,s+1)+\chi(0,1)\chi^i(r-1,s)]^F\\
                                  &\quad + \chi(0,p-1)\chi^i_{\alpha}(r,s)^F. 
   \end{align*}
   For $0 <a \leq p-2$,
   \begin{equation*}
     \chi^i_{\alpha}(a+pr, p-2-a+ps) = \chi^i(a+pr,p-2-a+ps) + \chi^i(a-2 +pr,p-1-a +ps)
   \end{equation*}
   and when $a=0$,
   \begin{align*}
      \chi^i_{\alpha}(pr, p-2 + ps) &= \chi(0,p-2)[\chi(1,0)\chi^i(r-1,s) + \chi^i(r-1,s-1)]^F + \chi(p-1,0)\chi^i_{\alpha}(r,s-1)^F \\
                                  &\quad + 2\chi(p-2,p-1)\chi^i(r-1,s)^F. 
    \end{align*}
%  This yields recursive expansion formulas for  $\chi^i_{\alpha}(\lambda)$ whenever $\lambda \in X(T)$ is a subregular weight
%  which is neither  dominant nor antidominant. 
\end{prop}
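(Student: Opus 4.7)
The unifying tool is the short exact sequence of $B$-modules
\[
0 \to k_{\lambda - \alpha} \to N(\alpha) \otimes k_\lambda \to k_\lambda \to 0
\]
arising from the indecomposable extension defining $N(\alpha)$, together with the associated long exact sequence
\[
\cdots \to H^i(\lambda - \alpha) \to H^i(N(\alpha)\otimes \lambda) \to H^i(\lambda) \to H^{i+1}(\lambda - \alpha) \to \cdots
\]
in $H^*(G/B, \mathscr{L}(-))$. Whenever every connecting map vanishes, passing to characters yields the clean identity $\chi^i_\alpha(\lambda) = \chi^i(\lambda) + \chi^i(\lambda - \alpha)$, which is already the shape of the first, second, and fourth identities: one checks directly using $\alpha = (2,-1)$ that the subtraction takes $(p-1+pr, a+ps)$ to $(p-3+pr, a+1+ps)$, $(p-2-a+pr, p-1+ps)$ to $(p-4-a+pr, p(s+1))$, and $(a+pr, p-2-a+ps)$ to $(a-2+pr, p-1-a+ps)$.

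For the three generic families, I would argue that in each case $\lambda - \alpha$ and $\lambda$ lie in the same $W_p$-facet of the subregular stratum handled by Proposition~\ref{prop:sreg_formulas}, so that after restricting the long exact sequence to $G_1$ and decomposing along $T$-isotypic components, the connecting map becomes a morphism between distinct linkage classes and hence vanishes. This Bott-type cancellation is exactly the content of the infinitesimal invariant computations in \cite[Sections 2.2--2.5]{DONK2007}, and applying them in each case gives the stated formulas.

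The real obstacle is the two boundary cases $a = p-2$ in the second family (yielding $\chi^i_\alpha(pr, p-1+ps)$) and $a = 0$ in the fourth family (yielding $\chi^i_\alpha(pr, p-2+ps)$). In these the first coordinate of $\lambda - \alpha$ becomes negative modulo $p$, so $\lambda - \alpha$ reflects across an alcove wall and the long exact sequence no longer degenerates into a two-term sum. Here I would replace $N(\alpha) \otimes k_\lambda$ by a finer two-step $B$-filtration whose successive quotients have the form $k_\mu \otimes V^{(1)}$ with $V$ a $G$-module built from the $B_1$-socle of $N(\alpha) \otimes k_\lambda$; applying $H^i(G/B, -)$ together with the tensor identity $H^i(\mu \otimes V^{(1)}) \cong H^i(\mu) \otimes V^{(1)}$ then produces precisely the mixed terms of the form $\chi(p-2,0)[\chi^i(r-1,s+1) + \chi(0,1)\chi^i(r-1,s)]^F$ and, critically, the recursively-defined term $\chi(0,p-1)\chi^i_\alpha(r,s)^F$ (and analogously for $a = 0$). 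The technical heart will be identifying the composition factors of these $G_1$-modules correctly, which is precisely where \cite[Lemma 6.3.1]{DONK2007} omits a case and introduces typos; I would verify the corrected formula by matching the total characters and dimensions against Donkin's infinitesimal invariant calculations, and by consistency with specialization to $p = 2, 3$ where the formulas can be checked by hand.
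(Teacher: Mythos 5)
Your handling of the three ``split'' families is a legitimate route and is genuinely different from the paper's, which does not argue via the long exact sequence at all: all five formulas are obtained uniformly from Donkin's expansion \cite[Lemma 2.1]{DONK2007} of $\chi^i_{\alpha}(\lambda)$ over the $G_1$-isotypic pieces of $\text{ind}_B^{G_1B}(N(\alpha)\otimes\lambda)$, whose structure is computed in \cite[Sections 2.2--2.5]{DONK2007}. One correction, though: your justification for the vanishing of the connecting maps is self-contradictory as written. The map $H^i(\lambda)\to H^{i+1}(\lambda-\alpha)$ vanishes precisely because $\lambda$ and $\lambda-\alpha$ lie in \emph{different} $W_p$-linkage classes --- by the strong linkage principle the two modules then share no composition factors, so every $G$-homomorphism between them is zero --- not because they lie in the same facet; indeed $\lambda$ is subregular while $\lambda-\alpha$ is generically regular. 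This non-linkage must actually be checked at the ends of each range (e.g.\ $a=p-2$ in the first family and $a=1$ in the fourth, where $\lambda-\alpha$ is again subregular), but it does hold there, so this part of your argument is repairable.

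The genuine gap is in the two boundary cases, which are exactly where the content of the proposition lies. A ``finer two-step $B$-filtration of $N(\alpha)\otimes k_{\lambda}$ with successive quotients of the form $k_{\mu}\otimes V^{(1)}$'' cannot exist as described: that module is two-dimensional, so each quotient is a single character and you recover the original short exact sequence. What is actually required is the $G_1T$-isotypic decomposition of $\text{ind}_B^{G_1B}(N(\alpha)\otimes\lambda)$ in the sense of \cite[Lemma 2.1]{DONK2007}; it is the component indexed by the restricted weight $(0,p-1)$ that produces the genuinely recursive term $\chi(0,p-1)\chi^i_{\alpha}(r,s)^F$, and no long-exact-sequence manipulation of the two-dimensional module will generate it. Your fallback --- ``verify by matching against Donkin's infinitesimal invariant calculations'' --- concedes that this step is outsourced, and verification against \cite[Lemma 6.3.1]{DONK2007} is not available here, since that lemma omits the $\chi^i_{\alpha}(pr,p-1+ps)$ case entirely and the corrected formula is precisely what the proposition is asserting. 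To close the gap you must carry out the $B_1$-invariant computation for these two weights (or redo the relevant portions of \cite[Sections 2.2--2.5]{DONK2007} and feed them into \cite[Lemma 2.1]{DONK2007}), rather than appeal to the source being corrected.
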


Finally, by the Andersen-Haboush identity (cf. \cite[Proposition II.3.19]{jan2003}),
we can consider the case when $\lambda$ is in the Steinberg block.
\begin{prop}
  For $p \geq 2$,
  \[
    \chi^i(p-1 +pr, p-1 +ps) = p^3 \chi^i(r,s) 
  \]
  and 
  \[
    \chi^i_{\alpha}(p-1+pr, p-1+ps) = \chi^i(p-1 +pr, p-1+ps) + \chi^i(p-3 +pr, p(s+1)). 
  \]
\end{prop}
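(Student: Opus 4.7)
The plan is to prove the two identities by separate methods. For the first identity, I would invoke the Andersen--Haboush isomorphism (cf.\ \cite[Proposition II.3.19]{jan2003}): for every $\mu \in X(T)$ and $i \geq 0$ there is a natural isomorphism of rational $G$-modules
\[
H^i(\xi + p\mu) \cong L(\xi) \otimes H^i(\mu)^{(1)},
\]
where $\xi = (p-1,p-1)$ is the Steinberg weight and $L(\xi)$ is the Steinberg module. Setting $\mu = (r,s)$ and taking formal characters yields the identity $\chi^i(p-1+pr, p-1+ps) = \chi(p-1,p-1) \cdot \chi^i(r,s)^F$; passing to ordinary dimensions via $\dim L(\xi) = p^3$ and $\dim V^{(1)} = \dim V$ recovers the stated relation.

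For the second identity, the approach begins with the structural short exact sequence of $B$-modules
\[
0 \to k_{-\alpha} \to N(\alpha) \to k_0 \to 0
\]
defining $N(\alpha)$ (the $k_{-\alpha}$ constituent is the $B$-submodule, since the negative root subgroups act by lowering weights and send $v_0$ to $v_{-\alpha}$, so $kv_{-\alpha}$ is the unique proper $B$-submodule). Tensoring by $k_{(p-1+pr,\,p-1+ps)}$ produces
\[
0 \to k_{(p-3+pr,\,p(s+1))} \to N(\alpha) \otimes k_{(p-1+pr,\,p-1+ps)} \to k_{(p-1+pr,\,p-1+ps)} \to 0,
\]
and applying $H^{\bullet}(G/B, -)$ yields a long exact sequence whose terms are exactly the three characters appearing in the proposition. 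The claimed character identity follows once every connecting homomorphism in this long exact sequence is shown to vanish, so that the sequence splits into short exact sequences in each degree and the characters add.

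The vanishing of the connecting maps is the main obstacle, and I would establish it using the strong linkage principle. The composition factors of each $H^j(\lambda)$ lie in the single dot-orbit $W_p\cdot\lambda$, so each connecting map is a $G$-equivariant morphism between cohomology modules whose composition factors occupy two specified dot-orbits. A direct calculation gives
\[
\big((p-1+pr,\,p-1+ps) + \rho\big) - \big((p-3+pr,\,p(s+1)) + \rho\big) = \alpha,
\]
so if these two weights were dot-linked, some $w \in W$ would satisfy $w(\alpha) \in p\mathbb{Z}\Phi$. Since every root of $A_2$ has coefficients in $\{-1,0,1\}$ relative to $\Pi$, this forces $w(\alpha) = 0$, a contradiction for any $p \geq 2$. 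Consequently the two cohomology modules share no composition factors, every $G$-equivariant map between them vanishes, and the proposition follows.
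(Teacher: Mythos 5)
Your overall strategy is sound and, for the first identity, is exactly the paper's (implicit) route: the paper proves nothing here beyond invoking the Andersen--Haboush isomorphism, and you correctly observe that what it literally yields is $\chi^i(p-1+pr,p-1+ps)=\chi_p(p-1,p-1)\,\chi^i(r,s)^F$, with the displayed form $p^3\chi^i(r,s)$ valid only after passing to (ordinary) dimensions --- which is in fact how the identity is used later, e.g.\ in the Case~1 computation of Proposition~\ref{prop:bottom_alcove}. For the second identity the paper simply defers to Donkin's calculations (this is one of the ``split'' cases in the sense defined in the proof of Proposition~\ref{prop:top_alcove}), whereas you supply an actual argument: the $B$-module extension $0\to k_{-\alpha}\to N(\alpha)\to k_0\to 0$, the resulting long exact sequence for $\mathrm{ind}_B^G$, and vanishing of the connecting maps by a block-separation argument. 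That is a legitimate, self-contained derivation and arguably more informative than the paper's citation.

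There is, however, one step whose justification as written does not go through. From the fact that the $\rho$-shifts of $\lambda=(p-1+pr,p-1+ps)$ and $\lambda-\alpha$ differ by $\alpha$ you conclude that dot-linkage would force $w(\alpha)\in p\mathbb{Z}\Phi$. That implication is false for general weights differing by a root: if $\langle\lambda+\rho,\alpha^\vee\rangle=1$ then $s_\alpha\cdot\lambda=\lambda-\alpha$, so the two weights are dot-linked with $w=s_\alpha$ and no divisibility condition on $w(\alpha)$ arises. What saves your argument here is the special feature of the Steinberg block: $\lambda+\rho=p(\rho+(r,s))\in pX(T)$, so for every $w\in W$ one has $(\lambda+\rho)-w(\lambda+\rho)=p\bigl((\rho+(r,s))-w(\rho+(r,s))\bigr)\in p\mathbb{Z}\Phi$, and then the linkage equation $\lambda+\rho=w(\lambda-\alpha+\rho)+p\gamma$ really does reduce to $w(\alpha)\in p\mathbb{Z}\Phi$, which is impossible in type $A_2$ for $p\ge 2$ as you say. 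You should insert this one-line observation; without it the key non-linkage claim is unsupported. With that repair the argument is complete.
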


\subsection{ }\label{sec:transposition}
  For any $(r,s) \in X(T)$,
  \[
    \chi^i_{\beta}(r, s) = \chi^i_{\alpha}(s,r)^{\tau}
  \]
  where the ring automorphism $(\cdot)^{\tau}: \mathbb{Z}[X(T)] \rightarrow \mathbb{Z}[X(T)]$  given by 
  $e(r,s) \mapsto e(s,r)$ is induced by the involutary automorphism $\tau: G\rightarrow G$ 
  (cf. \cite[2.5]{DONK2007} or $\sigma$ in \cite[II.2.13]{jan2003} for a more general discussion). 
  By applying $\tau$ to the above formulas for the $N(\alpha)$-bundles we get
  recursive expansion formulas for $\chi^i_{\beta}(\lambda)$ for all $\lambda \in X(T)$.

\section{Calculations for regular weights}\label{sec:reg_case}
\subsection{ }
In this section we will show that if $\lambda \in X(T)$ is any regular weight such that for some $i \geq 0$
$H^i(\lambda) \neq 0$, then $V_{G_1}(H^i(\lambda)) = \mathcal{N}$, the nilpotent cone. 
The case in which $\mathcal{L}(\lambda)$  has just one non-vanishing cohomology group  
follows immediately from Proposition~\ref{prop:npv_result}.
To deal with the case in which $\mathcal{L}(\lambda)$ has multiple non-vanishing cohomology groups, we will need to employ
the recursive character formulas given in Section~\ref{sec:dim_formulas}.
They will be used to show that the quantum dimension 
$\dim_{\zeta}H^i(\lambda) \neq 0$. Together with the results presented in Section~\ref{sec:supp_var}, it will follow that
\[
\dim V_{G_1}(H^i(\lambda)) = |\Phi| = \dim \mathcal{N}.
\]
 In addition, since $\mathcal{N}=G\cdot \mathfrak{u}$ is an irreducible variety, the non-vanishing of the quantum dimension will imply that
 $V_{G_1}(H^i(\lambda))=\mathcal{N}$. 
Therefore, the rest of this section will be devoted to showing that 
\[
D^i_{\zeta}(r,s) := \dim_{\zeta}(H^i(r,s)) \neq 0
\] 
whenever $(r,s) \in X(T)$ is regular and has multiple non-vanishing cohomology groups. 

By specializing the formulas given in Proposition~\ref{prop:reg-weight}  to the quantum dimension,
we get the following proposition. 
\begin{prop}\label{prop:general-char-nonrecursive}
  For any $p \geq 3$ and $(a,b) \in X_1(T)$ with $a + b < p-2$,
\begin{align*}
  D^i_{\zeta}(a+pr,b+ps) &= D^0_{\zeta}(a,b)[\delta^i(r,s) -\delta^i(r-1,s-1) -2\delta^i(r,s-1) - 2\delta^i(r-1,s) \\
  & +\delta^i_{\alpha}(r,s-1) + \delta^i_{\beta}(r-1,s)].
 \end{align*}
 If $a+b > p-2$,
 \begin{align*}
   D^i_{\zeta}(a+pr,b+ps) &= -D^0_{\zeta}(a,b)[-\delta^i(r,s) +\delta^i(r-1,s-1) -2\delta^i(r,s-1) - 2\delta^i(r-1,s) \\ 
  & +\delta^i_{\alpha}(r+1,s-1) + \delta^i_{\beta}(r-1,s+1)].
 \end{align*}
\end{prop}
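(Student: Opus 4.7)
The plan is to specialize the recursive character identities of Proposition~\ref{prop:reg-weight} to quantum dimensions by composing $\varphi:\mathbb{Z}[X(T)]\to\mathbb{Z}[t,t^{-1}]$ from Section~\ref{sec:gen_dim} with $\mathrm{ev}_\zeta$. Under this composition each Frobenius-twisted factor $\chi^\bullet(r',s')^F$ specializes to the ordinary dimension $\delta^\bullet(r',s')$, by the identity $\mathrm{ev}_\zeta\circ(-)^F=\mathrm{ev}_1$ recorded at the end of Section~\ref{sec:gen_dim}. All that remains is to evaluate $D_p(\mu)|_{t=\zeta}$ for each of the six ``coefficient'' simple characters $\chi_p(\mu)$ appearing in Proposition~\ref{prop:reg-weight} and verify that the contributions combine as stated.

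Assume first that $a+b<p-2$. I would begin by placing each coefficient weight in its $p$-alcove. Three of them --- $(a,b)$, $(b,p-3-a-b)$, and $(p-3-a-b,a)$ --- lie strictly inside the bottom $p$-alcove, so $L(\mu)=H^0(\mu)$ and $D_p(\mu)=D^0_t(\mu)$ is given by Weyl's formula
\[
D^0_t(r,s)=\frac{[r+1]\,[s+1]\,[r+s+2]}{[1]\,[1]\,[2]}, \qquad [n]:=t^n-t^{-n}.
\]
The remaining three weights $(p-2-b,p-2-a)$, $(a+b+1,p-2-b)$, and $(p-2-a,a+b+1)$ lie in the second alcove and are each linked, via the affine reflection $s_{\alpha+\beta,1}$, to one of the three bottom-alcove weights listed above. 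For such second-alcove weights the standard submodule structure of the $SL_3$ Weyl module yields $\mathrm{ch}\,L(\mu)=\chi^0(\mu)-\chi^0(s_{\alpha+\beta,1}\cdot\mu)$, so that $D_p(\mu)|_{t=\zeta}=D^0_\zeta(\mu)-D^0_\zeta(s_{\alpha+\beta,1}\cdot\mu)$.

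To finish, I would reduce each $D^0_\zeta(\mu)$ to $\pm D^0_\zeta(a,b)$ using the elementary identity $[p-n]_\zeta=-[n]_\zeta$ (immediate from $\zeta^p=1$) inside Weyl's formula: for example $D^0_\zeta(p-2-b,p-2-a)=-D^0_\zeta(a,b)$, $D^0_\zeta(b,p-3-a-b)=D^0_\zeta(a,b)$, and $D^0_\zeta(a+b+1,p-2-b)=-D^0_\zeta(a,b)$, with the remaining three handled symmetrically. Substituting these into the specialized recursion and tallying the six contributions then produces the stated formula. The case $a+b>p-2$ proceeds identically from the second formula of Proposition~\ref{prop:reg-weight}, now with $(a,b)$ itself in the second alcove and $(p-2-b,p-2-a)$ in the bottom alcove; this swap is what produces the overall minus sign in front of the bracket.

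The main obstacle is bookkeeping rather than ideas: the six coefficient weights must each be correctly placed in an alcove, the two-composition-factor decomposition applied in the three second-alcove cases, and the signs coming from $[p-n]_\zeta=-[n]_\zeta$ tallied consistently with the coefficients of the various $\delta^\bullet(\cdot)$ terms. Beyond Weyl's dimension formula and the well-known submodule structure of $SL_3$ Weyl modules in the lowest two alcoves, no deeper tool is required.
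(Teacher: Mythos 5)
Your proposal is correct and is exactly the route the paper intends: Proposition~\ref{prop:general-char-nonrecursive} is obtained by applying $\mathrm{ev}_\zeta\circ\varphi$ to Proposition~\ref{prop:reg-weight}, using $\mathrm{ev}_\zeta\circ(-)^F=\mathrm{ev}_1$ on the twisted factors and reducing each coefficient $\dim_\zeta L(\mu)$ to $\pm D^0_\zeta(a,b)$ via the two-alcove composition series and the identity $[p-n]_\zeta=-[n]_\zeta$. Your alcove placements, linkage pairings, and sign computations all check out, so the proposal fills in precisely the bookkeeping the paper leaves implicit.
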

As a result, we can see that the quantum dimensions are really determined by the integers
\begin{equation}\label{eq: ST-nonrecursive}
\begin{aligned}
S^i(r,s) &= \delta^i(r,s)-\delta^i(r-1,s-1)-2\delta^i(r,s-1)-2\delta^i(r-1,s)\\
         &+\delta^i_{\alpha}(r,s-1)+\delta^i_{\beta}(r-1,s) \\
T^i(r,s) &= -\delta^i(r,s)+\delta^i(r-1,s-1)-2\delta^i(r,s-1)-2\delta^i(r-1,s)\\
         &+\delta^i_{\alpha}(r+1,s-1)+\delta^i_{\beta}(r-1,s+1).
\end{aligned}
\end{equation}
One can verify that  $D^0_{\zeta}(0,0) = 1$ and $D^0_{\zeta}(p-2,p-2) = -1$, thus,
for any $(r,s) \in X(T)$
\begin{equation*}
\begin{aligned}
S^i(r,s) &= D^i(pr,ps) \\
T^i(r,s) &= D^i(p-2+pr,p-2+ps).
\end{aligned}
\end{equation*}
Therefore, by calculating the integers $S^i(r,s)$ and $T^i(r,s)$, we will determine
 $D^i_{\zeta}(a+pr,b+ps)$ for all $(a,b) \in X_1(T)$.
It will be sufficient to calculate the integers $S^i(r,s)$ and $T^i(r,s)$.  
\begin{rem}
Figure~\ref{pic:p=5} graphically depicts certain values of $S^1(r,s)$ and $T^1(r,s)$ when $p=5$. The apparent symmetry of the numbers appearing in the figure suggests that $S^1(r,s)$ and $T^1(r,s)$ may be given by simpler recursive formulas.  
\end{rem}

%\subsection{ }
%For $p=5$, Figure~\ref{pic:p=5} depicts some values of $S^1(r,s)$ and $T^1(r,s)$ which are marked at the points
%$(5r,5s)$ and $(3+5r,3+5s)$ respectively (see the green diamond for some reference).  
%The collection of weights $\lambda$ which satisfy $H^i(\lambda) \neq 0$ for $i=1,2$
% lie in the interiors of the shaded diamonds.

\begin{figure}\label{pic:p=5}
\includegraphics[scale=0.7]{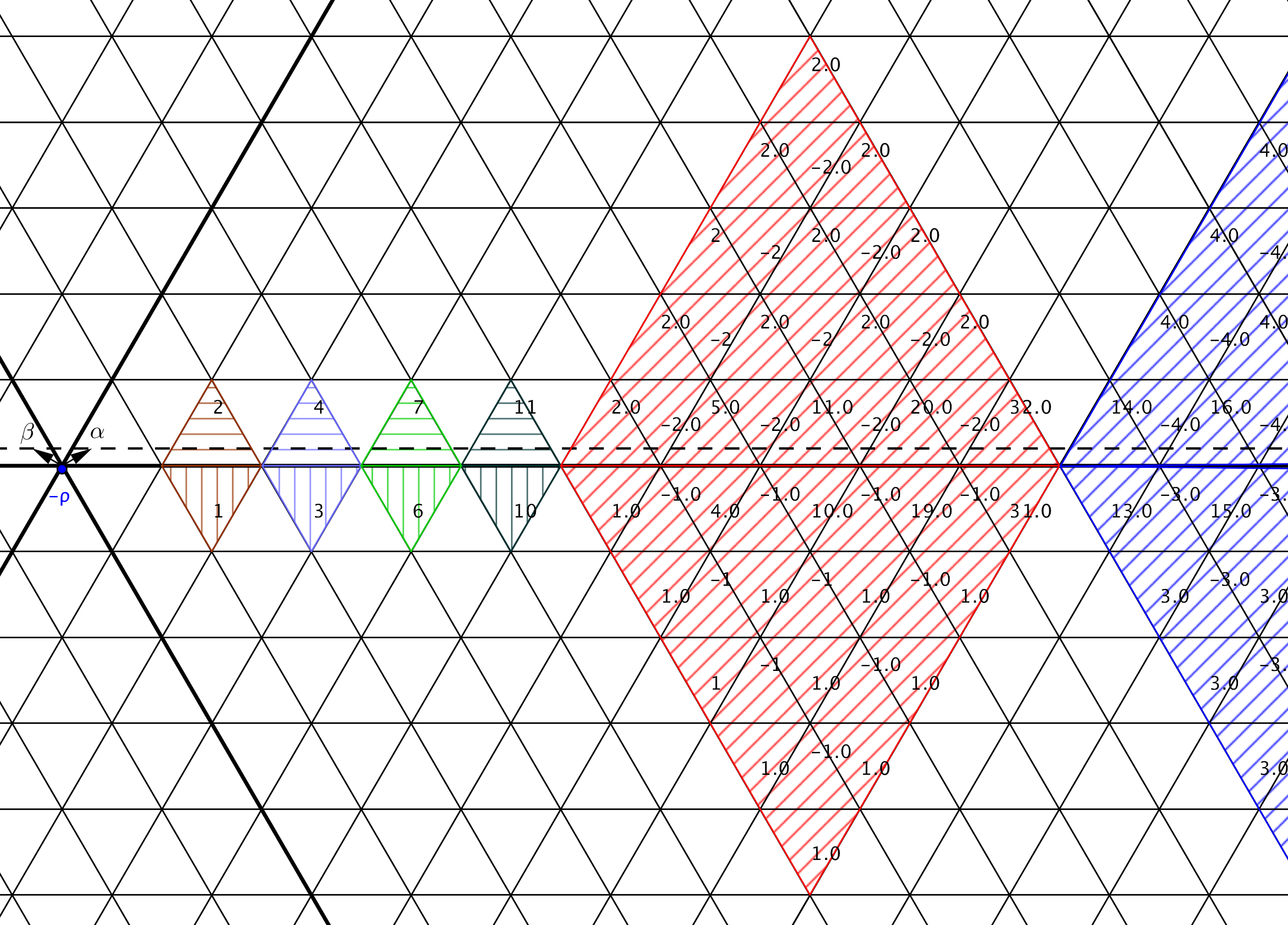}
%\caption{$S^1(r,s)$, $T^1(r,s)$ when $p=5$. The shaded regions can are divided into ``diamonds'' of the form $p}
\caption{This is a picture of the weight lattice for $SL_3(k)$ when $p=5$. The shaded regions consist of the weights that have multiple non-vanishing cohomology groups 
(see also \cite[Fig.1.]{and21979}). For certain weights $(r,s) \in X(T)$, the value of $S^1(r,s)$ has been placed at the point $(5r,5s)$ and the value of $T^1(r,s)$ has been placed 
at the point $(3+5r, 3+5s)$. For instance, the numbers $S^1(3,-4) = 6$ and $T^1(3,-4) = 7$ are labeled at the points $(15,-20)$ and $(18,-17)$ (the bottom and top of the green ``diamond'') respectively.  }
 \end{figure}

%Our goal now is to derive recursive expansion formulas for these integers and to use
%them to show  that the quantum dimensions $D^i(a+pr,b+ps)$ for regular $(a,b) \in X_1(T)$
%do not vanish for $i=1,2$ whenever the ordinary dimension is nonzero. 
\subsection{}
Our strategy begins by writing  $(r,s) = (x+pr_0,y+ps_0)$ for some $(x,y) \in X_1(T)$,  which gives us
\begin{equation*}%\label{eq: ST-nonrecursive2}
\begin{aligned}
S^i(r,s) &= \delta^i(x+pr_0,y+ps_0)-\delta^i(x-1+pr_0,y-1 +ps_0)-2\delta^i(x+pr_0,y-1+ps_0) \\
  &-2\delta^i(x-1+pr_0,y+ps_0)     +\delta^i_{\alpha}(x+pr_0,y-1+ps_0)+\delta^i_{\beta}(x-1+pr_0,y+ps_0) \\
T^i(r,s) &= -\delta^i(x+pr_0,y+ps_0)+\delta^i(x-1+pr_0,y-1+ps_0)-2\delta^i(x+pr_0,y-1+ps_0)\\
  &-2\delta^i(x-1+pr_0,y+ps_0) +\delta^i_{\alpha}(x+1+pr_0,y-1+ps_0)+\delta^i_{\beta}(x-1+pr_0,y+1+ps_0).
\end{aligned}
\end{equation*}
We then replace each of the  $\delta^i$ terms above
%$\delta^i(x+pr_0,y+ps_0)$, $\delta^i(x-1+pr_0,y-1+ps_0)$,
%$\delta^i(x+pr_0,y-1+ps_0)$, $\delta^i(x-1+pr_0,y+ps_0)$, $\delta^i_{\alpha}(x+pr_0,y-1+ps_0)$,
%$\delta^i_{\alpha}(x+1+pr_0,y-1+ps_0)$, $\delta^i_{\beta}(x-1+pr_0,y+ps_0)$ and $\delta^i_{\beta}(x-1+pr_0,y+1+ps_0)$
with the expansion formulas from Section~\ref{sec:dim_formulas} so that the formulas for $S^i(r,s)$ and  $T^i(r,s)$ 
will only involve weights near $(r_0,s_0)$. By grouping the terms together, we will build  simpler
expressions.

Observe that for each $(x,y) \in X_1(T)$, the choice of the formulas to substitute into (\ref{eq: ST-nonrecursive})
will vary. So the first step is to explicitly divide $X_1(T)$ into several subsets which 
will cover all the different types of possible substitutions. We then enumerate across every such subset to check all possibilities. 

\subsection{ }
The next proposition will give recursive formulas for $S^i(r,s)$ and $T^i(r,s)$ when $$(r,s)=(x+pr_0,y+ps_0)$$ and $x+y <p-1$.
\begin{prop}\label{prop:bottom_alcove}
  For any weight $(r,s) = (x +pr_0,y+ps_0)$  with $0\leq x,y$,  $x+y < p-1$ and $(r_0,s_0) \in X(T)$
  \begin{align*}
    S^i(r,s) &= S^i(r_0,s_0) \\
    T^i(r, s)  &= -S^i(r_0,s_0).
  \end{align*}
\end{prop}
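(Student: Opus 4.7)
The plan is to expand each of the six dimension-terms in
\[
S^i(r,s) = \delta^i(r,s) - \delta^i(r-1,s-1) - 2\delta^i(r,s-1) - 2\delta^i(r-1,s) + \delta^i_{\alpha}(r,s-1) + \delta^i_{\beta}(r-1,s),
\]
with $(r,s)=(x+pr_0,y+ps_0)$, by applying the recursive character formulas from Section~\ref{sec:dim_formulas}, and then to verify that after collection the six coefficients match those of $S^i(r_0,s_0)$, so that the two expressions coincide.  The argument for $T^i(r,s)$ is parallel and produces the same coefficients with opposite sign, yielding $-S^i(r_0,s_0)$.

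Each of the six summands has highest weight of the form $(x'+pr_0',y'+ps_0')$ for some $(x',y')\in X_1(T)$ and some $(r_0',s_0')$ equal to $(r_0,s_0)$ or differing by $\pm 1$ in either coordinate; the latter occurs, for instance, when $x=0$ forces $x-1+pr_0 = (p-1)+p(r_0-1)$.  Applying the appropriate proposition from Section~\ref{sec:dim_formulas}, each such summand expands as a $\mathbb{Z}$-linear combination of the six ``template'' dimensions
\[
\delta^i(r_0,s_0),\ \delta^i(r_0-1,s_0-1),\ \delta^i(r_0,s_0-1),\ \delta^i(r_0-1,s_0),\ \delta^i_{\alpha}(r_0,s_0-1),\ \delta^i_{\beta}(r_0-1,s_0),
\]
whose coefficients are simple-module dimensions $\delta_p(c,d)=\dim L(c,d)$ for various $(c,d)\in X_1(T)$.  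Summing the six expansions reduces the claim to verifying that the resulting coefficient vector equals $(1,-1,-2,-2,1,1)$ for $S^i$ (and its negative for $T^i$).

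A case division is forced by the different shapes of the recursion depending on where $(x,y)$ sits.  I would organise it into four families: (i) the interior of the bottom alcove, $x,y\geq 1$ with $x+y<p-2$, where every summand expands via Proposition~\ref{prop:reg-weight} together with the generic $N(\alpha)$- and $N(\beta)$-formulas; (ii) the edges $x=0$ or $y=0$ with $x+y<p-2$, where summands whose weight has a coordinate $\equiv -1\pmod p$ must instead be re-expanded via Proposition~\ref{prop:sreg_formulas}, and the $N(\alpha)$- or $N(\beta)$-summand invokes its exceptional $a=0$ form; (iii) the corner $x=y=0$, combining both edge treatments and bringing in the Andersen--Haboush identity for the $(x-1,y-1)$ summand (which becomes a Steinberg translate); and (iv) the wall $x+y=p-2$, where $\delta^i(x+pr_0,y+ps_0)$ itself is expanded via the third formula of Proposition~\ref{prop:sreg_formulas}, again with its own edge and corner subdivisions.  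In each family, the bookkeeping is performed and the explicit dimension formulas for the simple modules appearing are substituted to identify the six coefficients.

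The main obstacle will be the combinatorial bookkeeping.  Each of the six summands expands into up to six contributions, yielding six coefficient identities per subcase, each a sum of products of simple-module dimensions $\delta_p(c,d)$.  In the interior regular subcase these reduce to polynomial manipulations in $(x,y)$ because $\delta_p(c,d)=\tfrac{1}{2}(c+1)(d+1)(c+d+2)$ whenever $c+d\leq p-2$; but in the edge, corner and wall subcases one must deal with simple-module dimensions lying outside the bottom alcove, together with the ``extra'' terms produced by the exceptional $a=0$ formulas and the Andersen--Haboush identity, and verifying that these contributions all cancel or combine correctly is where the delicate work lies.  Because the required $\delta_p$-identities are routine but intricate, I would defer the most technical of them to Section~\ref{sec:verif}, as the introduction anticipates.
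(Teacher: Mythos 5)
Your proposal follows essentially the same route as the paper: substitute the recursive character formulas of Section~\ref{sec:dim_formulas} into each of the six terms of (\ref{eq: ST-nonrecursive}) and check that the collected coefficients reproduce $S^i(r_0,s_0)$ and $-S^i(r_0,s_0)$, with a case division on $(x,y)$ governed by which expansion (regular, subregular edge, Steinberg corner, or wall $x+y=p-2$) applies to each summand. The paper refines your four families into fourteen explicit subcases and works out only the corner case $(x,y)=(0,0)$ in detail, asserting the rest by the same computation, so your plan and its acknowledged bookkeeping burden match the published argument.
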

\begin{proof}
The technique that we will use to verify the above identities is described in the remarks preceding this proposition. 
Namely, for each $(x,y) \in X_1(T)$ with $x+y < p-1$, we will have to expand the formulas in (\ref{eq: ST-nonrecursive})
by substituting in the recursive expansions for each  dimension. It can be verified that there are only
 14 cases for $(x,y)$ where in each case we will choose different formulas to substitute into the equations. The cases
 are
 \begin{description}
 \item[Case 1] $(x,y)=(0,0)$,
 \item[Case 2] $(x,y)=(1,0)$,
 \item[Case 3] $(x,y) = (a,0)$ where $2\leq a <p-2$,
 \item[Case 4] $(x,y) = (0,1)$,
 \item[Case 5] $(x,y)= (0,b)$ where $2 \leq b <p-2$,
 \item[Case 6] $(x,y) = (1,1)$,
 \item[Case 7] $(x,y) = (a,1)$ where $2 \leq a < p-3$,
 \item[Case 8] $(x,y) = (1,b)$ where $2 \leq b < p-3$,
 \item[Case 9] $(x,y)= (a,b)$ where $2 \leq a,b$ and $a+b < p-2$,
 \item[Case 10]$(x,y) = (0,p-2)$,
 \item[Case 11]$(x,y) = (1,p-3)$,
 \item[Case 12] $(x,y) = (a,p-2-a)$ where $2 \leq a <p-3$,
 \item[Case 13] $(x,y) = (p-3,1)$,
 \item[Case 14] $(x,y) = (p-2,0)$.
 \end{description}
 
 To better illustrate this method, we will demonstrate how to verify the first case. 
 From above we see that in Case 1 $(x,y)=(0,0)$ and so $(r,s)=(pr_0,ps_0)$,  thus
 \begin{align*}
 \delta^i(r,s) &= \delta^i(pr_0,ps_0) \\
               &=  \delta_p(0,0)\delta^i(r_0,s_0)+[\delta_p(p-2,p-2)+\delta_p(0,0)]\delta^i(r_0-1,s_0-1) \\
               &\,+  \delta_p(1,p-2)\delta^i(r_0,s_0-1) + \delta_p(p-2,1)\delta^i(r_0-1,s_0)\\
               & \,+ \delta_p(0,p-3)\delta^i_{\alpha}(r_0,s_0-1)+\delta_p(p-3,0)\delta^i_{\beta}(r_0-1,s_0)\\ 
 \delta^i(r-1,s-1)&= \delta^i(p-1+ p(r_0-1),p-1+p(s_0-1)) = p^3\delta^i(r_0-1,s_0-1)\\
 \delta^i(r,s-1) &= \delta^i(pr_0,p-1+p(s_0-1))\\
                 &= \delta_p(0,p-1)\delta^i(r_0,s_0-1)+\delta_p(p-1,p-2)\delta^i(r_0-1,s_0-1)\\
                 &\,+\delta_p(p-2,0)\delta^i_{\beta}(r_0-1,s_0)\\
 \delta^i(r-1,s) &= \delta^i(p-1+p(r_0-1),ps_0) \\
                 &= \delta_p(p-1,0)\delta^i(r_0-1,s_0)+\delta_p(p-2,p-1)\delta^i(r_0-1,s_0-1)\\ 
                 &\,+\delta_p(0,p-2)\delta^i_{\alpha}(r_0,s_0-1)\\
 \delta^i_{\alpha}(r,s-1)  &=\delta^i_{\alpha}(pr_0,p-1+p(s_0-1))\\
                            &=2\delta_p(p-1,p-2)\delta^i(r_0-1,s_0-1)+\\
                            &\,\delta_p(p-2,0)[\delta^i(r_0-1,s_0)+\delta_p(0,1)\delta^i(r_0-1,s_0-1)]\\
                            &\,+\delta_p(0,p-1)\delta^i_{\alpha}(r_0,s_0-1)\\       
 \delta^i_{\beta}(r-1,s) &= \delta^i_{\beta}(p-1+p(r_0-1),ps_0) \\
                         &= 2\delta_p(p-1,p-2)\delta^i(r_0-1,s_0-1) \\
                         &\,+ \delta_p(p-2,0)[\delta^i(r_0,s_0-1)+\delta_p(0,1)\delta^i(r_0-1,s_0-1)]\\
                         &\,+\delta_p(0,p-1)\delta^i_{\beta}(r_0-1,s_0)\\
 \delta^i_{\alpha}(r+1,s-1)  &= \delta^i_{\alpha}(1+pr_0,p-1+p(s_0-1)) \\
                             &= \delta^i(1+pr_0,p-1+p(s_0-1)) + \delta^i(p-1+p(r_0-1),ps_0)\\
 \delta^i_{\beta}(r-1,s+1)   &= \delta^i_{\beta}(p-1+p(r_0-1),1+ps_0) \\
                             &= \delta^i(p-1+p(r_0-1),1+ps_0)+\delta^i(pr_0, p-1+ps_0).
 \end{align*}
 Substituting this into (\ref{eq: ST-nonrecursive}) and simplifying 
 we can see that $S^i(r,s)=S^i(r_0,s_0)$ and $T^i(r,s)=-S^i(r_0,s_0)$.
 By repeating this computation for Cases 2-14, one can verify that the same recursive 
 identity holds, proving the identity stated in hypothesis. 
\end{proof}

\subsection{ }
The following proposition gives similar formulas for $S^i(r,s)$ and $T^i(r,s)$ whenever $(r,s) = (x+pr_0,y+ps_0)$ with $x+y >p-1$. 
\begin{prop}\label{prop:top_alcove}
  For any weight $(r,s) = (x +pr_0,y+ps_0)$  with $(r_0,s_0) \in X(T)$, $0\leq x,y$ and $x+y > p-1$.
  \begin{align*}
    S^i(r,s) &= -T^i(r_0,s_0) \\
    T^i(r, s)  &= T^i(r_0,s_0).
  \end{align*}
\end{prop}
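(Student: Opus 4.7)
My plan is to mirror the case-by-case verification used in the proof of Proposition~\ref{prop:bottom_alcove}, adapted to the top-alcove regime $x+y > p-1$. Writing $(r,s) = (x+pr_0, y+ps_0)$ with $(x,y) \in X_1(T)$ and $x+y \geq p$, I would expand each of the six terms appearing in~(\ref{eq: ST-nonrecursive}) --- namely $\delta^i(r,s)$, $\delta^i(r-1,s-1)$, $\delta^i(r,s-1)$, $\delta^i(r-1,s)$, and the $N(\alpha)$- and $N(\beta)$-twisted contributions --- using the recursive character formulas from Section~\ref{sec:dim_formulas}, and then collect and simplify.

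The conceptual reason one obtains $\pm T^i(r_0,s_0)$ rather than $\pm S^i(r_0,s_0)$ is that in this regime the leading term $\delta^i(r,s)$ must be expanded via the second branch of Proposition~\ref{prop:reg-weight} (the $a+b > p-2$ branch), which produces $\chi^i(r_0+1, s_0-1)^F$ and $\chi^i(r_0-1, s_0+1)^F$ in place of the $\chi^i(r_0, s_0-1)^F$ and $\chi^i(r_0-1, s_0)^F$ that appear in the bottom-alcove branch. After specialization, these terms feed directly into the definition of $T^i(r_0, s_0)$, while the neighbors $(r-1,s-1)$, $(r,s-1)$, $(r-1,s)$ lie variously in the top or bottom alcove depending on whether $x$ and $y$ individually equal $0$, $p-1$, or something in between; this is precisely what necessitates the enumeration.

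The case breakdown runs parallel to the fourteen cases listed for Proposition~\ref{prop:bottom_alcove}. It covers the Steinberg case $(x,y) = (p-1, p-1)$, handled using the Andersen--Haboush identity of the final proposition of Section~\ref{sec:dim_formulas}; the boundary cases $(x,y) = (p-1, b)$ and $(a, p-1)$, handled via Proposition~\ref{prop:sreg_formulas}; and the interior cases $(x,y) = (a,b)$ with $1 \leq a,b \leq p-2$ and $a+b \geq p$, subdivided according to whether $a$ or $b$ equals $1$ or $p-2$ (so that neighboring weights may cross into the bottom alcove and require the first branch of Proposition~\ref{prop:reg-weight}). The $N(\alpha)$ and $N(\beta)$ terms with $x=0$ or $y=0$ require the corresponding special-case formulas from Section~\ref{sec:dim_formulas}, together with the transposition identity from Section~\ref{sec:transposition}.

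The main obstacle is combinatorial bookkeeping rather than any conceptual difficulty: in each of roughly fifteen subcases one must track which formula applies to each of six terms, and verify that the resulting expansions collapse correctly. Once assembled, each case reduces mechanically as in Case~1 of Proposition~\ref{prop:bottom_alcove}: one collects coefficients of $\delta^i(r_0 + \varepsilon_1, s_0 + \varepsilon_2)$ and of the $\alpha$- and $\beta$-twisted terms near $(r_0, s_0)$, and then uses identities such as $\delta_p(a,b) = \delta_p(b,a)$, $\delta_p(0,0) = 1$, and Weyl's generic dimension formula to verify that the expression simplifies to $-T^i(r_0, s_0)$ for $S^i(r, s)$ and to $T^i(r_0, s_0)$ for $T^i(r, s)$.
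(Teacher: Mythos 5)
Your proposal is correct and follows essentially the same route as the paper: a case-by-case enumeration of $(x,y)$ (the paper uses twelve cases, distinguished by which neighboring weights become subregular or Steinberg and which $N(\alpha)$-, $N(\beta)$-bundles fail to split), followed by substitution of the recursive formulas from Section~\ref{sec:dim_formulas} into~(\ref{eq: ST-nonrecursive}) and mechanical simplification, exactly as in Case~1 of Proposition~\ref{prop:bottom_alcove}. Your observation that the shifted twisted terms $\chi^i_{\alpha}(r+1,s-1)^F$ and $\chi^i_{\beta}(r-1,s+1)^F$ from the $a+b>p-2$ branch are what produce $T^i(r_0,s_0)$ rather than $S^i(r_0,s_0)$ is the right conceptual explanation, though note those terms carry the $\alpha$, $\beta$ subscripts.
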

\begin{proof}
As in the proof of Proposition~\ref{prop:bottom_alcove}, we can break this up into a number of distinct cases
\begin{description}
 \item[Case 1] $(x,y)=(1,p-1)$,
 \item[Case 2] $(x,y)=(p-1,1)$,
 \item[Case 3] $(x,y) = (2,p-2)$,
 \item[Case 4] $(x,y) = (p-2,2)$,
 \item[Case 5] $(x,y)= (a,p-a)$ where $3 \leq a \leq  p-3$,
 \item[Case 6] $(x,y) = (a,p-1)$ where $3\leq a \leq p-2$,
 \item[Case 7] $(x,y) = (p-1,a)$ where $3 \leq a \leq p-2$,
 \item[Case 8] $(x,y) = (a,p-2)$ where $3 \leq a \leq p-3$,
 \item[Case 9] $(x,y)= (p-2,a)$ where $3 \leq a \leq p-3$,
 \item[Case 10]$(x,y) = (p-2,p-2)$,
 \item[Case 11]$(x,y) = (a,b)$ where $a,b \leq p-3$ and $a+b \geq p+1$,
 \item[Case 12] $(x,y) = (p-1,p-1)$.
 \end{description}
 %It may be noted that some of the cases may not exist for small primes.   
 First, we say that the module  $H^i( \lambda \otimes N(\alpha))$ is \emph{split} if 
 $  \chi^i_{\alpha}(\lambda) = \chi^i(\lambda) + \chi^i(\lambda-\alpha)$.
 
 Now to illustrate how this theorem is proven, lets begin by considering Case 1. In this instance, $(r,s)$, $(r-1,s)$ and $(r-1,s-1)$ are subregular and $H^i((r-1,s+1)\otimes N(\beta))$ is non-split,
 yet in Case 3, $(r-1,s+1)$ and $(r-1,s-1)$ are subregular.  In Case 5,  $(r-1,s-1)$ is subregular. In Case 6, we see that $(r,s)$ 
 and $(r-1,s)$ are subregular and $H^i((r-1,s+1)\otimes N(\beta ))$ is non split. In Case 8, we observe that
 $(r-1,s+1)$ is subregular. In Case 10, 
 we see that$(r-1,s+1)$ and $(r+1,s-1)$ are subregular. In Case 11, all weights are regular and all bundles split and in Case 12,
 $(r,s)$ is in the Steinberg block.
\end{proof}

Now we just have to consider weights of the form $(r,s)=(x+pr_0,y+ps_0)$ where $x+y=p-1$.  
In this case, the new recursive formulas will also involve the terms
\begin{equation}
\begin{aligned}
\phi^i(r,s) &:= -\delta^i(r,s-1)-3\delta^i(r-1,s)+\delta^i_{\alpha}(r,s-1)+\delta^i_{\beta}(r-1,s+1)\\
\psi^i(r,s) &:= -\delta^i(r-1,s)-3\delta^i(r,s-1)+\delta^i_{\beta}(r-1,s)+\delta^i_{\alpha}(r+1,s-1).
\end{aligned}
\end{equation}
The following identities can be verified
 \begin{equation}\label{eq: phi-psi identity}
 \begin{aligned}
 \phi^i(r,s)&=\psi^i(s,r)\\
 \phi^i(r,s)+\psi^i(r,s)&=S^i(r,s)+T^i(r,s).
 \end{aligned}
 \end{equation}
 
Now we can state the following result.
 \begin{prop}\label{prop:fund_line}
 For any weight  $(r,s)=(x+pr_0,y + ps_0)$ with $(r_0,s_0) \in X(T)$,  $0\leq x \leq p-1$ and 
 $y=p-1-x$, 
 \begin{align*}
 S^i(r,s) &= S^i(r_0,s_0) + \frac{1}{2}x(x+1)[\phi^i(r_0,s_0) + \psi^i(r_0,s_0)] + \frac{1}{2}p(p-1-2x)\phi^i(r_0,s_0)\\
          &= S^i(r_0,s_0) + \frac{1}{2}y(y+1)[\phi^i(r_0,s_0) + \psi^i(r_0,s_0)] + \frac{1}{2}p(p-1-2y)\psi^i(r_0,s_0)\\
 T^i(r,s) &= T^i(r_0,s_0) + \frac{1}{2}x(x+1)[\phi^i(r_0,s_0) + \psi^i(r_0,s_0)] + \frac{1}{2}p(p-1-2x)\phi^i(r_0,s_0)\\
          &= T^i(r_0,s_0) + \frac{1}{2}y(y+1)[\phi^i(r_0,s_0) + \psi^i(r_0,s_0)] + \frac{1}{2}p(p-1-2y)\psi^i(r_0,s_0).
 \end{align*}
\end{prop}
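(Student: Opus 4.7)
The strategy parallels the proofs of Propositions~\ref{prop:bottom_alcove} and~\ref{prop:top_alcove}: write $(r,s) = (x + pr_0,\, y + ps_0)$ with $(x,y)\in X_1(T)$ satisfying $y = p-1-x$, substitute the recursive character formulas of Section~\ref{sec:dim_formulas} into the defining expressions (\ref{eq: ST-nonrecursive}) for $S^i(r,s)$ and $T^i(r,s)$, and collect the resulting coefficients of the $(r_0,s_0)$-level dimensions.

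The distinguishing feature of the fundamental line is that it sits at the boundary between the two branches of Proposition~\ref{prop:reg-weight}. For $1\le x\le p-2$ the weight $(x,y)$ is regular and one applies the $a+b>p-2$ clause, but the adjacent weights $(r,s-1)$ and $(r-1,s)$ correspond to $(x,p-2-x)$ and $(x-1,p-1-x)$, both of which lie on the subregular hyperplane $x'+y' = p-2$ and so must be expanded using Proposition~\ref{prop:sreg_formulas}. The extreme values $x=0$ and $x=p-1$ place $(r,s)$ itself on a subregular wall, while $x=1$ and $x=p-2$ trigger the exceptional subcases of the $N(\alpha)$- and $N(\beta)$-bundle formulas. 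Thus the case split is (at least) $x\in\{0,1,2,\dots,p-2,p-1\}$, with a handful of small-$p$ degeneracies to be checked separately.

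The plan is therefore to enumerate these cases, apply the appropriate formulas, expand, and verify the claimed identities. The coefficients $\tfrac{1}{2}x(x+1)$ and $\tfrac{1}{2}p(p-1-2x)$ indicate that the outcome depends polynomially on $x$, which suggests an economical route: check $x=0$ as a base case, and then show by comparing the substitutions at $x$ and $x+1$ that the increments satisfy $S^i(x+1+pr_0,y-1+ps_0)-S^i(x+pr_0,y+ps_0)=(x+1)[\phi^i(r_0,s_0)+\psi^i(r_0,s_0)]-p\,\phi^i(r_0,s_0)$, with the analogous recursion for $T^i$. The main obstacle is the bookkeeping: each case involves substituting several identities and carefully recognising which combinations of $(r_0,s_0)$-level terms coalesce into the new quantities $S^i$, $\phi^i$, and $\psi^i$; the subregular substitutions are what create the nonzero correction terms that were absent in Propositions~\ref{prop:bottom_alcove} and~\ref{prop:top_alcove}.

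Finally, the two alternative forms of each identity (in $x$ versus $y$) are equivalent via a short algebraic manipulation using identity (\ref{eq: phi-psi identity}): when $y=p-1-x$ a direct expansion gives $x(x+1)-y(y+1)=-p(p-1-2x)=p(p-1-2y)$, and combining this with $\phi^i(r_0,s_0)+\psi^i(r_0,s_0)=S^i(r_0,s_0)+T^i(r_0,s_0)$ interchanges the two expressions. Hence it suffices to verify one form directly.
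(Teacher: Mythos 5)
Your proposal is correct and follows essentially the same route as the paper: the paper's proof is exactly a case analysis on the residue $x\in\{0\},\{1\},\{2,\dots,p-3\},\{p-2\},\{p-1\}$, substituting the recursive formulas of Section~\ref{sec:dim_formulas} into (\ref{eq: ST-nonrecursive}) and invoking (\ref{eq: phi-psi identity}) where needed, and your identification of which neighbouring weights become subregular and which bundle formulas hit their exceptional subcases matches that case split. Your observation that the $x$- and $y$-forms are interchangeable (via $x(x+1)-y(y+1)=-p(p-1-2x)=p(p-1-2y)$) and your incremental reorganisation in $x$ are fine but do not change the substance of the argument.
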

\begin{proof}
The proof is similar to that of the previous two propositions. As before we must consider several 
different cases for $0 \leq x \leq p-1$ which determine the formulas that are substituted into
(\ref{eq: ST-nonrecursive}) when $(x,y)=(x,p-1-x)$. In this instance, there are the five distinct cases
\begin{description}
\item[Case 1] $x = 0$,
\item[Case 2] $x = 1$,
\item[Case 3] $2 \leq x \leq p-3$,
\item[Case 4] $x=p-2$,
\item[Case 5] $x =p-1$.
\end{description}
The identities can be verified by checking each case and using (\ref{eq: phi-psi identity}) when necessary.
\end{proof}

\subsection{ }
 Finally, we will present recursive formulas for  $\phi^i(r,s)$ and $\psi^i(r,s)$.  
 \begin{prop}\label{prop:phi_rec}
 Let $(r,s)=(x+pr_0,y+ps_0)$ with $(x,y) \in X_1(T)$ and $(r_0,s_0) \in X(T)$, then 
 if  $(x,y)=(x,p-1-x)$,
 \begin{equation*}
 \begin{aligned}
 \phi^i(r,s)&=\frac{1}{2}[p(p+1-2x) + x(x-1)]\phi^i(r_0,s_0)+\frac{1}{2}x(x-1)\psi^i(r_0,s_0)\\
            &= \frac{1}{2}x(x-1)[\phi^i(r_0,s_0)+\psi^i(r_0,s_0)]+\frac{1}{2}p(p+1-2x)\phi^i(r_0,s_0)\\
 \psi^i(r,s)&=\frac{1}{2}[p(p+1-2(p-1-x))+(p-1-x)(p-2-x)]\psi^i(r_0,s_0) \\
          &\,+\frac{1}{2}(p-1-x)(p-2-x)\phi^i(r_0,s_0)\\
            &= \frac{1}{2}y(y-1)[\phi^i(r_0,s_0)+\psi^i(r_0,s_0)]+\frac{1}{2}p(p+1-2y)\psi^i(r_0,s_0),
 \end{aligned}
 \end{equation*}
 otherwise,
 \[
   \phi^i(r,s)=\psi^i(r,s) =0.
 \]
 \end{prop}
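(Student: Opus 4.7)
The plan is to follow the same case-by-case substitution template used in the proofs of Propositions~\ref{prop:bottom_alcove}, \ref{prop:top_alcove}, and \ref{prop:fund_line}. Writing $(r,s) = (x+pr_0, y+ps_0)$ with $(x,y) \in X_1(T)$, each of the four dimension terms appearing in the definition of $\phi^i(r,s)$, namely $\delta^i(r,s-1)$, $\delta^i(r-1,s)$, $\delta^i_{\alpha}(r,s-1)$ and $\delta^i_{\beta}(r-1,s+1)$, is to be expanded using the recursive character formulas of Section~\ref{sec:dim_formulas}. Which formula is applicable depends on the positions of the shifted arguments $(x,y-1)$, $(x-1,y)$ and $(x-1,y+1)$ within $X_1(T)$ modulo $p$ (regular, subregular on an alcove wall, or Steinberg block), so the first step is to partition $X_1(T)$ into subcases according to these positions, singling out the boundary values $x,y \in \{0, 1, p-2, p-1\}$.

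In each subcase I would substitute the corresponding expansions, then regroup the resulting terms by the coefficients of $\delta^i$, $\delta^i_{\alpha}$ and $\delta^i_{\beta}$ at translated arguments, and collapse the expression using the definitions of $\phi^i(r_0,s_0)$, $\psi^i(r_0,s_0)$, $S^i(r_0,s_0)$ and $T^i(r_0,s_0)$. The automorphism identity $\phi^i(r,s) = \psi^i(s,r)$ from (\ref{eq: phi-psi identity}) then halves the work: once $\phi^i$ is verified in every case, the formula for $\psi^i$ is obtained by applying the transposition $(\cdot)^{\tau}$ of Section~\ref{sec:transposition}. For the off-fundamental-line cases ($x+y \neq p-1$) the claim is that the regrouped expression vanishes identically; after substitution this reduces in each subcase to a finite linear identity among $\delta_p$-dimensions, verifiable via Weyl's dimension formula. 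For $y = p-1-x$, the output is the stated $\mathbb{Z}$-linear combination of $\phi^i(r_0,s_0)$ and $\psi^i(r_0,s_0)$, with the two displayed expressions agreeing via the substitution $y = p-1-x$.

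The principal obstacle is the combinatorial bookkeeping: one must check a couple dozen subcases, each generating a long algebraic expression, and the Steinberg-block and subregular corner cases must be handled with the non-split versions of the $N(\alpha)$- and $N(\beta)$-bundle formulas rather than the generic split identity $\chi^i_{\alpha}(\lambda) = \chi^i(\lambda) + \chi^i(\lambda - \alpha)$. Conceptually, however, no new input is required beyond the formulas in Section~\ref{sec:dim_formulas} and Weyl's dimension formula; once the cases are enumerated, the verification is mechanical.
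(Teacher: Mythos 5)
Your proposal matches the paper's approach exactly: the paper's proof of this proposition is simply the remark that it is "similar to that of the earlier propositions," i.e.\ the same case-by-case substitution of the recursive character formulas of Section~\ref{sec:dim_formulas} into the defining expressions, followed by regrouping, which is precisely what you describe (including the use of $\phi^i(r,s)=\psi^i(s,r)$ to halve the work). No further comment is needed.
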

 \begin{proof}
 The proof of this proposition is similar to that of the earlier propositions.
 \end{proof} 
 
 \subsection{ }
Now that we have established our formulas, we can proceed with the proof of the non-vanishing result for the quantum dimension.
\begin{lem}\label{lem: monotonicity}
Let $(r,s) = (x+pr_0,y+ps_0)$. If $\phi^i(r_0,s_0) \geq 0$ and $\psi^i(r_0,s_0) \geq 0$,
then 
\begin{align*}
\phi^i(r,s)&\geq \phi^i(r_0,s_0) \geq 0,\\
 \psi^i(r,s) &\geq \psi^i(r_0,s_0) \geq 0. 
 \end{align*}
\end{lem}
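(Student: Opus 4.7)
\medskip

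\noindent\textbf{Proof plan.} The approach is a direct appeal to the recursion in Proposition~\ref{prop:phi_rec}, writing everything in the form $(r,s)=(x+pr_0,\,y+ps_0)$ with $(x,y)\in X_1(T)$ and splitting on whether $(x,y)$ lies on the fundamental line $y=p-1-x$.

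If $(x,y)\neq(x,p-1-x)$, then the ``otherwise'' clause of Proposition~\ref{prop:phi_rec} gives $\phi^i(r,s)=\psi^i(r,s)=0$, and there is nothing further to check for the nonnegativity assertions.

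If $(x,y)=(x,p-1-x)$ with $0\le x\le p-1$, I would regroup the formula of Proposition~\ref{prop:phi_rec} as
\[
\phi^i(r,s)=A(x)\,\phi^i(r_0,s_0)+B(x)\,\psi^i(r_0,s_0),
\]
where $A(x)=\tfrac{1}{2}\bigl[x(x-1)+p(p+1-2x)\bigr]$ and $B(x)=\tfrac{1}{2}x(x-1)$. The decisive observation is the algebraic identity
\[
x(x-1)+p(p+1-2x)=(x-p)^2+(p-x)=(p-x)(p-x+1),
\]
so that $A(x)=\tfrac{1}{2}(p-x)(p-x+1)$. For $0\le x\le p-1$ we have $p-x\ge 1$, hence $A(x)\ge 1$, while $B(x)\ge 0$ for every $x\ge 0$. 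Since by hypothesis $\phi^i(r_0,s_0)\ge 0$ and $\psi^i(r_0,s_0)\ge 0$, this gives
\[
\phi^i(r,s)\ge A(x)\,\phi^i(r_0,s_0)\ge \phi^i(r_0,s_0)\ge 0.
\]
The claim for $\psi^i$ is symmetric: using the second displayed form of $\psi^i(r,s)$ in Proposition~\ref{prop:phi_rec} with $y=p-1-x$ in place of $x$, the same factorization $(p-y)(p-y+1)\ge 2$ shows that the coefficient of $\psi^i(r_0,s_0)$ is $\ge 1$ and the coefficient of $\phi^i(r_0,s_0)$ is $\ge 0$, yielding $\psi^i(r,s)\ge\psi^i(r_0,s_0)\ge 0$.

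The only real content is the factorization $(p-x)(p-x+1)$; once this is spotted, both coefficients in each recursion are manifestly nonnegative and the ``main'' coefficient exceeds $1$ throughout the admissible range of $x$, so the monotonicity propagates. I expect this algebraic rearrangement to be the main (and indeed only) obstacle; the rest is bookkeeping between the fundamental-line and off-line cases of Proposition~\ref{prop:phi_rec}.
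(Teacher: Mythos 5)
Your proof is correct and follows the same route as the paper: invoke the recursion of Proposition~\ref{prop:phi_rec}, dispose of the off-fundamental-line case where both quantities vanish, and bound the coefficients in the on-line case. The one genuine difference is in how the coefficient $A(x)=\tfrac{1}{2}[x(x-1)+p(p+1-2x)]$ is handled. The paper treats $f(x,p)=A(x)$ as a real quadratic, shows $f(x,p)\geq -1/4$, and concludes $f(x,p)\geq 0$ on integers; strictly speaking that only yields $\phi^i(r,s)\geq 0$, not the stated monotonicity $\phi^i(r,s)\geq\phi^i(r_0,s_0)$, which requires the coefficient of $\phi^i(r_0,s_0)$ to be at least $1$. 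Your factorization $x(x-1)+p(p+1-2x)=(p-x)(p-x+1)$ gives $A(x)=\tfrac{1}{2}(p-x)(p-x+1)\geq 1$ for $0\leq x\leq p-1$ directly, together with $B(x)=\tfrac{1}{2}x(x-1)\geq 0$, so your argument actually proves the full claim as stated (and the symmetric argument for $\psi^i$ with $y=p-1-x$ goes through identically). In short: same strategy, but your algebraic identity supplies the sharper bound that the lemma really needs.
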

\begin{proof}
If $x+y \neq p-1$, then $\phi^i(r,s)=\psi^i(r,s)=0$ by Proposition~\ref{prop:phi_rec}, so the result is trivial. 
Now assume that $y=p-1-x$, by Proposition~\ref{prop:phi_rec}, we get
\[
\phi^i(r,s) = \frac{1}{2}[p(p+1-2x) + x(x-1)]\phi^i(r_0,s_0)+\frac{1}{2}x(x-1)\psi^i(r_0,s_0).
\]
Thus it is sufficient to show that 
\[
f(x,p)=\frac{1}{2}[p(p+1-2x)+x(x-1)]\geq 0.
\]
This follows from a simple analytic argument:  if we treat $p>0$ as a constant and regard
$f(x,p)$ as a function for all $x \in \mathbb{R}$, then it can be shown that $f(x,p) \geq -1/4$
for all $x \in \mathbb{R}$. In particular, since $f(x,p) \in \mathbb{Z}$ whenever $x\in \mathbb{Z}$, we have that  
 $f(x,p) \geq 0$ for all $x \in \mathbb{Z}$.
\end{proof}

The following two lemmas will allow us to reduce the proof of the non-vanishing result to
the case where $(r,s)$ lies on the fundamental line. 
\begin{lem}\label{lem:constant_formula}
  If $(r,s) = (a+pr_0,b+ps_0)$ with $r_0 + s_0 \neq -1$, then
  \begin{align*}
    T^i(r,s) &= T^i(r_0,s_0) \\
    S^i(r,s) &= S^i(r_0,s_0).
  \end{align*}
\end{lem}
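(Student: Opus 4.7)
The plan is to carry out a case analysis on the $p$-adic coordinates $(a,b) \in X_1(T)$ of the expansion $(r,s) = (a+pr_0, b+ps_0)$, invoke the appropriate recursive formula from the preceding propositions, and reduce the whole statement to a single vanishing subclaim: $\phi^i(r_0,s_0) = \psi^i(r_0,s_0) = 0$ whenever $r_0 + s_0 \neq -1$.

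I would split into three cases depending on whether $a+b$ is less than, greater than, or equal to $p-1$. If $a+b < p-1$, Proposition~\ref{prop:bottom_alcove} gives $S^i(r,s) = S^i(r_0, s_0)$ outright, while $T^i(r,s) = -S^i(r_0, s_0)$, so the desired identity $T^i(r,s) = T^i(r_0,s_0)$ reduces to $S^i(r_0, s_0) + T^i(r_0, s_0) = 0$. The case $a+b > p-1$ is symmetric via Proposition~\ref{prop:top_alcove}. In the boundary case $a + b = p - 1$, Proposition~\ref{prop:fund_line} yields the common correction
\[
C = \tfrac{1}{2} a(a+1)[\phi^i(r_0,s_0) + \psi^i(r_0,s_0)] + \tfrac{1}{2} p(p-1-2a)\phi^i(r_0,s_0)
\]
for both $S^i(r,s) - S^i(r_0,s_0)$ and $T^i(r,s) - T^i(r_0,s_0)$, and forcing $C = 0$ for a generic $a \in \{0,\ldots,p-1\}$ requires both $\phi^i(r_0,s_0) + \psi^i(r_0,s_0) = 0$ and $\phi^i(r_0,s_0) = 0$, i.e.\ the full subclaim. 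The identity $\phi^i + \psi^i = S^i + T^i$ from equation~\eqref{eq: phi-psi identity} converts the vanishing required in the first two cases into the same subclaim, so the entire lemma collapses to proving it.

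To prove the subclaim I would use strong induction on the $p$-adic decomposition $(r_0, s_0) = (x_1 + pr_1, y_1 + ps_1)$ with $(x_1, y_1) \in X_1(T)$. If $x_1 + y_1 \neq p-1$, Proposition~\ref{prop:phi_rec} gives $\phi^i(r_0, s_0) = \psi^i(r_0, s_0) = 0$ at once. Otherwise $x_1 + y_1 = p-1$, and the relation $r_0 + s_0 = (p-1) + p(r_1 + s_1)$ together with the hypothesis $r_0 + s_0 \neq -1$ forces $r_1 + s_1 \neq -1$; the induction hypothesis then supplies $\phi^i(r_1, s_1) = \psi^i(r_1, s_1) = 0$, and the fundamental-line branch of Proposition~\ref{prop:phi_rec} expresses $\phi^i(r_0, s_0)$ and $\psi^i(r_0, s_0)$ as linear combinations of these, hence zero.

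The main obstacle is establishing the well-foundedness of the induction. When the recursive branch fires at every level, the restricted dynamic on the sum $r_k + s_k$ is $s \mapsto (s-(p-1))/p$, whose unique fixed point in $\mathbb{Z}$ is $s = -1$; every other integer escapes this dynamic in finitely many steps, eventually landing at a level where $x_k + y_k \neq p-1$ and Proposition~\ref{prop:phi_rec} terminates the recursion. Turning this orbit analysis into a clean induction parameter, particularly for weights with negative components where the naive $p$-adic expansion does not halt, is the most delicate bookkeeping in the argument.
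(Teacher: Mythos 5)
Your proposal is correct and follows essentially the same route as the paper: a three-way split on $a+b$ versus $p-1$ via Propositions~\ref{prop:bottom_alcove}, \ref{prop:top_alcove} and \ref{prop:fund_line}, reduced through \eqref{eq: phi-psi identity} to the vanishing $\phi^i(r_0,s_0)=\psi^i(r_0,s_0)=0$ for $r_0+s_0\neq -1$, which the paper simply asserts and you rightly prove by induction on the $p$-adic expansion. The well-foundedness you flag as delicate is in fact immediate: when the recursive branch of Proposition~\ref{prop:phi_rec} fires one has $r_0+s_0+1=p(r_1+s_1+1)$, so the positive integer $|r_0+s_0+1|$ drops by a factor of $p$ at each step and one may induct on it, the base case $|r_0+s_0+1|$ prime to $p$ being exactly the terminating branch.
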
  
\begin{proof}
Since $(r_0,s_0)$ is not a fundamental line weight, we know that $\phi^i(r_0,s_0)=0=\psi^i(r_0,s_0)$
and that $S^i(r_0,s_0) = -T^i(r_0,s_0)$. 
Thus if $a+b > p-1$, by Proposition~\ref{prop:bottom_alcove}, we get 
\begin{align*}
  T^i(r,s) &= T^i(r_0,s_0)\\
  S^i(r,s) &= -T^i(r_0,s_0) = S^i(r_0,s_0).
\end{align*}
In addition, since $\phi^i(r_0,s_0) = 0 = \psi^i(r_0,s_0)$, by Proposition~\ref{prop:fund_line}, 
when $a + b = p-1$,
\begin{align*}
    T^i(r,s) &= T^i(r_0,s_0) \\
    S^i(r,s) &= S^i(r_0,s_0).
\end{align*}
Similarly, when $a+b < p-1$, we get 
\begin{align*}
  T^i(r,s) &= -S^i(r_0,s_0)=T^i(r_0,s_0)\\
  S^i(r,s) &=  S^i(r_0,s_0).
\end{align*}
\end{proof}

%The next lemma generalizes the recursive formulas for $S^i(r,s)$ and $T^i(r,s)$ given in 
%Propositions~\ref{prop:bottom_alcove} and \ref{prop:top_alcove}.
First, we observe that for any weight $(r,s)$ which satisfies $(r+1)(s+1) \leq 0$ (i.e. if $(r,s)$ is neither dominant nor anti-dominant), can be written as
\begin{equation}\label{eqn:p-adic_expansion}
  (r,s) = (x,y) + p^k(r_0,s_0),
\end{equation}
where $$(x,y) = \sum_{i=0}^{k-1}p^i(a_i,b_i),$$ $r_0 + s_0 = -1$,
and each $(a_i,b_i) \in X_1(T)$. We can see that $r+s=-1$ if and only if
$a_i + b_i = p-1$ for all $i=0,\dots,k-1$. 
Moreover, if $r + s \neq -1$, then we can assume that 
$a_{k-1} + b_{k-1} \neq p-1$. 
\begin{rem}
To see why the last statement is true, first we observe that if $r+s \neq -1$, then there must exist an
integer $1\leq j \leq k$ such that $a_{j-1} + b_{j-1} \neq p-1$ and $a_i + b_i =p-1$ for all $i \geq j$. 
Now set
\[
  (r_1,s_1) = \left( \sum_{i=j}^{k-1}p^{i-j}(a_i,b_i) \right)+ p^{k-j}(r_0,s_0)
\]
and let 
\[
  (x',y') = \sum_{i=0}^{j-1}p^i(a_i,b_i).
\]
Then we can write $(r,s) = (x',y') + p^j(r_1,s_1)$ where $a_{j-1}+b_{j-1}\neq 1$ and $r_1 + s_1 = -1$. 
\end{rem}

Using these $p$-adic expansions, we can state the following lemma which, as we will soon show,
reduces the problem of determining where $S^i(r,s)\neq 0$, $T^i(r,s) \neq 0$ for arbitrary
weights to only having to consider the weights which lie on the fundamental line. 

\begin{lem}\label{lem:fundline_reduction}
  Let $(r,s)$ be a weight which satisfies $(r+1)(s+1) \leq 0$ and $r+s \neq -1$, so that we can write 
  $(r,s) = (x,y) + p^k(r_0,s_0)$ 
  with $(x,y)$ as in \eqref{eqn:p-adic_expansion}  and $a_{k-1} + b_{k-1} = -1$.
%   where $$(x,y) = \sum_{i=0}^{k-1}p^i(a_i,b_i),$$ $r_0 + s_0 = -1$, 
%   each $(a_i,b_i) \in X_1(T)$ and   $a_{k-1} + b_{k-1} \neq p-1$.
   If $x+y >p^k-1$, then
  \begin{align*}
    T^i(r,s) &= T^i(r_0,s_0) \\
    S^i(r,s) &= -T^i(r_0,s_0)
  \end{align*}
  and when $x+y <p^k-1$,
    \begin{align*}
    T^i(r,s) &= -S^i(r_0,s_0) \\
    S^i(r,s) &=  S^i(r_0,s_0).
  \end{align*}
\end{lem}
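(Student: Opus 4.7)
The plan is to iterate Lemma~\ref{lem:constant_formula}, peeling off one factor of $p$ at a time from the bottom of the $p$-adic expansion of $(r,s)$, until only the top digit $(a_{k-1},b_{k-1})$ remains, and then finish with a single application of either Proposition~\ref{prop:bottom_alcove} or Proposition~\ref{prop:top_alcove}. The key point is that the hypothesis on the top digit forces every intermediate truncation of $(r,s)$ to lie off the fundamental line $\{u+v=-1\}$, which is exactly the condition required to invoke Lemma~\ref{lem:constant_formula}.

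First I would introduce, for each $0\le j\le k$, the truncated weight
\[
  (u_j,v_j) \;:=\; \sum_{i=j}^{k-1} p^{i-j}(a_i,b_i) + p^{k-j}(r_0,s_0),
\]
so that $(u_0,v_0)=(r,s)$, $(u_k,v_k)=(r_0,s_0)$, and $(u_j,v_j)=(a_j,b_j)+p(u_{j+1},v_{j+1})$. The next step is to verify that $u_{j+1}+v_{j+1}\neq -1$ for each $0\le j\le k-2$. Setting $c_m := (a_{j+1+m}+b_{j+1+m})-(p-1)\in[-(p-1),p-1]$, we have $u_{j+1}+v_{j+1}+1 = \sum_{m=0}^{k-j-2} p^m c_m$, and since $|c_m|\le p-1$ the leading term $p^{k-j-2}c_{k-j-2}$ strictly dominates the tail, whose absolute value is at most $p^{k-j-2}-1$. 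Hence the sum vanishes only if every $c_m=0$, which would force $a_{k-1}+b_{k-1}=p-1$, contradicting the hypothesis.

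With that established, iterating Lemma~\ref{lem:constant_formula} along each decomposition $(u_j,v_j)=(a_j,b_j)+p(u_{j+1},v_{j+1})$ for $j=0,\dots,k-2$ yields $S^i(u_j,v_j)=S^i(u_{j+1},v_{j+1})$ and $T^i(u_j,v_j)=T^i(u_{j+1},v_{j+1})$ at each step, collapsing to $S^i(r,s)=S^i(u_{k-1},v_{k-1})$ and $T^i(r,s)=T^i(u_{k-1},v_{k-1})$. At the remaining level, $(u_{k-1},v_{k-1})=(a_{k-1},b_{k-1})+p(r_0,s_0)$ with $a_{k-1}+b_{k-1}\neq p-1$: if $a_{k-1}+b_{k-1}>p-1$ then Proposition~\ref{prop:top_alcove} delivers $S^i=-T^i(r_0,s_0)$ and $T^i=T^i(r_0,s_0)$, while if $a_{k-1}+b_{k-1}<p-1$ then Proposition~\ref{prop:bottom_alcove} delivers $S^i=S^i(r_0,s_0)$ and $T^i=-S^i(r_0,s_0)$.

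Finally, I would match the dichotomy on $a_{k-1}+b_{k-1}$ with the stated dichotomy on $x+y$: writing $c_i:=(a_i+b_i)-(p-1)$, we have $x+y-(p^k-1)=\sum_{i=0}^{k-1}p^i c_i$, and the same dominant-digit argument as above shows that the sign of this quantity agrees with the sign of $c_{k-1}=a_{k-1}+b_{k-1}-(p-1)$; in particular the borderline case $x+y=p^k-1$ cannot occur under our hypotheses, so the case split is exhaustive. The main obstacle is really just bookkeeping and sign-tracking; the one conceptual step is recognizing that Lemma~\ref{lem:constant_formula} already performs the heavy lifting for all but the topmost level of the $p$-adic tower, reducing the problem to a single application of Proposition~\ref{prop:bottom_alcove} or Proposition~\ref{prop:top_alcove} at the top.
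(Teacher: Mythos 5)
Your proof is correct and follows essentially the same route as the paper's: the paper runs an induction on $k$ whose inductive step is exactly your application of Lemma~\ref{lem:constant_formula} to strip off one $p$-adic digit, with the base case $k=1$ handled by Propositions~\ref{prop:bottom_alcove} and~\ref{prop:top_alcove}. Your unrolled version is if anything slightly more careful, since it explicitly verifies the two facts the paper leaves implicit, namely that every intermediate truncation lies off the fundamental line and that the sign of $x+y-(p^k-1)$ is governed by the top digit $a_{k-1}+b_{k-1}-(p-1)$.
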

\begin{proof}
  Notice that when $k=1$, 
  $(r,s) = (a, b) + p(r_0,s_0)$,  in which case the identities follow immediately from 
  Propositions~\ref{prop:bottom_alcove} and \ref{prop:top_alcove}.
  Next we proceed by induction on $k$.  First we consider the case when 
  $x + y < p^k-1$.  This implies that $a_{k-1} + b_{k-1} \leq p-1$.
  However, by assumption we know that $a_{k-1} + b_{k-1}\neq p-1$, and  thus
  \[ a_{k-1}+b_{k-1} < p-1.\] 
  Suppose now that  the 
  above formulas hold whenever $(r,s) = (x,y) + p^k(r_0,s_0)$. Thus, for some arbitrary $(a,b) \in X_1(T)$ we consider 
  $(x',y') = (a,b) + p(x,y)$ and $(r',s') = (x',y') + p^{k+1}(r_0,s_0)$, then we 
  can see that 
  \[
    (r',s') = (a,b) + p(r,s).
  \]
  Since $r+s \neq -1$, then by Lemma~\ref{lem:constant_formula} we get 
  \begin{align*}
    T^i(r',s') &= T^i(r,s) = -S^i(r_0,s_0)\\
    S^i(r',s') &= S^i(r,s) = S^i(r_0,s_0).
  \end{align*}
  Therefore, the identity follows by induction. 
  The case where $x+y > p^k-1$ is proved in a similar way.  
\end{proof}

\subsection{ }
Before we state the main result of this section, we will recall \cite[Theorem 3.6]{and21979}
which states that
$\delta^i(r,s) \neq 0$ for $i=1,2$ with $r\geq s$ if and only if 
$(r,s) = (a,b) + p^n(t, -t-1)$ for some $1\leq t \leq p-1$, $n \geq 1$ and $0\leq a,b \leq p^n-2$. These weights occur inside the 
interiors of the shaded regions depicted in 
Figure~\ref{pic:p=5} when $p=5$ and $n=1,2$. 

\begin{thm}\label{thm:reg_result}
Assume that $p \geq 3$ and let $(r,s) \in X(T)$ be a regular weight such that $\delta^i(r,s)=\dim H^i(r,s) \neq 0$ for
$i=1,2$. Then $D_{\zeta}^i(r,s)=\dim_{\zeta} H^i(r,s) \neq 0$ for $i=1,2$.
\end{thm}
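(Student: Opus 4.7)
The plan is to iterate the recursive formulas of this section, with Lemma~\ref{lem: monotonicity} forcing non-vanishing to propagate from a suitable base up to the weight $(r,s)$. Using the $\tau$-symmetry of Section~\ref{sec:transposition}, I will assume without loss of generality that $r \geq s$, so that Andersen's theorem writes $(r,s) = (a,b) + p^n(t,-t-1)$ with $n \geq 1$, $1 \leq t \leq p-1$, and $0 \leq a,b \leq p^n-2$.

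Consider the standard $p$-adic decomposition $(r,s) = (a',b') + p(r_1,s_1)$ with $(a',b') \in X_1(T)$. Since $(r,s)$ is regular, so is $(a',b')$, and in particular $a'+b' \neq p-2$. Proposition~\ref{prop:general-char-nonrecursive} then gives
\[
D^i_\zeta(r,s) = \pm D^0_\zeta(a',b') \cdot X,
\]
where $X = S^i(r_1,s_1)$ if $a'+b' < p-2$ and $X = T^i(r_1,s_1)$ if $a'+b' > p-2$. Since $D^0_\zeta(a',b') \neq 0$ on the regular locus of $X_1(T)$, the theorem reduces to showing the appropriate $X$ is nonzero. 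Lemma~\ref{lem:fundline_reduction} further reduces this to showing $S^i$ or $T^i$ is nonzero on some weight $(r_0,s_0)$ lying on the fundamental line.

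From this point one iterates Propositions~\ref{prop:fund_line} and~\ref{prop:phi_rec}: the former expresses $S^i(r_0,s_0)$ and $T^i(r_0,s_0)$ as a shallower $S^i$ (resp.\ $T^i$) plus a combination of $\phi^i$ and $\psi^i$ with non-negative coefficients $\tfrac{1}{2}x(x+1)$ and $\tfrac{1}{2}p(p-1-2x)$, while the latter governs the descent for $\phi^i$ and $\psi^i$ themselves. The engine of the argument is Lemma~\ref{lem: monotonicity}: once $\phi^i, \psi^i \geq 0$ at some stage, they remain non-negative at every subsequent stage, so any strict positivity established at the deepest fundamental-line weight propagates upward through the recursion.

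The chief obstacle is the base step, which is an innermost fundamental-line weight of the form $(t_0,-t_0-1)$ with $1 \leq t_0 \leq p-1$. Here one must compute $S^i, T^i, \phi^i$ and $\psi^i$ directly from their defining alternating sums, exploiting that many of the nearby weights are singular (hence killed by Kempf-type vanishing) and using Andersen's theorem to certify precisely which $H^i$ survive for $i=1,2$. Establishing strict positivity of $\phi^i(0,-1)+\psi^i(0,-1)$ together with the correct signs of $S^i(0,-1)$ and $T^i(0,-1)$ is where the bulk of the verification lies; once this base case is confirmed, the recursion together with monotonicity delivers $D^i_\zeta(r,s) \neq 0$ for $i=1,2$.
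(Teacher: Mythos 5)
Your reduction steps coincide with the paper's: Proposition~\ref{prop:general-char-nonrecursive} to pass from $D^i_\zeta$ to $S^i$ or $T^i$, the $\tau$-symmetry and Andersen's theorem to normalize the weight, and Lemma~\ref{lem:fundline_reduction} to land on the fundamental line. Where you diverge is the endgame. The paper does \emph{not} propagate positivity of $S^1$ and $T^1$ individually; it establishes the identity $T^1(r,-r-1)-S^1(r,-r-1)=1$ via Weyl's character formula and Serre duality, and then derives a contradiction from the lower bound $S^1+T^1=\phi^1+\psi^1>1$, which is what Lemma~\ref{lem: monotonicity} actually propagates (starting from $S^1(t,-t-1)+T^1(t,-t-1)=t^2+t+1$). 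Your alternative --- push strict positivity of $S^i$ and $T^i$ themselves up the fundamental line --- can be made to work and would let you dispense with the $T^1-S^1=1$ identity, but only after repairing the step below; you would also need to record the base values $S^1(t,-t-1)=\tfrac12 t(t+1)$, $T^1(t,-t-1)=\tfrac12 t(t+1)+1$ and the nonnegativity of $\phi^1,\psi^1$ there, all of which you defer, and to dispatch $i=2$ via Serre duality ($S^2(r,-r-1)=T^1(r,-r-1)$).

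The concrete gap is your justification of the propagation: you assert that $S^i(r,s)$ is a shallower $S^i$ plus a combination of $\phi^i$ and $\psi^i$ \emph{with non-negative coefficients $\tfrac12 x(x+1)$ and $\tfrac12 p(p-1-2x)$}. The second coefficient is negative whenever $x>(p-1)/2$ (e.g.\ $p=5$, $x=4$ gives $-10$), so monotonicity of $S^i$ does not follow as stated. The correct argument regroups Proposition~\ref{prop:fund_line}: the coefficient of $\phi^i(r_0,s_0)$ is
\[
\tfrac12\bigl[x(x+1)+p(p-1-2x)\bigr]=\tfrac12 (p-x)(p-x-1)\geq 0,
\]
a product of consecutive integers, while the coefficient of $\psi^i(r_0,s_0)$ is $\tfrac12 x(x+1)\geq 0$; this is the same device used in Lemma~\ref{lem: monotonicity}. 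Separately, your base point $(0,-1)$ is a singular weight (it lies on a wall, so all $H^i(0,-1)$ vanish); the base cases are the weights $(t,-t-1)$ with $1\leq t\leq p-1$, as you correctly say one sentence earlier.
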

\begin{proof}
%  We first observe that this theorem will follow if it can be shown that $S^i(r,s)\neq 0$ and $T^i(r,s) \neq 0$ for $i=1,2$ whenever 
%  $(r,s) = (a,b) + p^n(t, -t-1)$ for some $1\leq t \leq p-1$, $n \geq 1$ and $0\leq a,b \leq p^n-2$. 
%  Assume this statement holds, and suppose $(r,s)$ is any regular weight  satisfying $\delta^i(r,s) \neq 0$ for $i=1,2$.  
%  If we write
%  \[
%  (r,s) = (x,y) + p(r_0,s_0),
%  \]
%  where $(x,y) \in X_1(T)$, then by Proposition~\ref{prop:general-char-nonrecursive}
% \[
% D_{\zeta}^i(r,s)=
%  \begin{cases}
%   D^0_{\zeta}(x,y)S^i(r_0,s_0)   & \text{ if $x+y < p-2$,} \\
%   D^0_{\zeta}(x,y)T^i(r_0,s_0)    & \text{ if $x +y > p-2$}
%  \end{cases}
%  \]
%  (the regularity of $(r,s)$ excludes the $x+y = p-2$ case). 
%  Moreover, we may assume that
%  $r\geq s$ since applying $\tau$ as in Section~\ref{sec:transposition} gives us $\delta^i(r,s) = \delta^i(s,r)$. Thus,
%  
%  Let $(r,s) \in X(T)$ be a regular weight with $r > s$, and which satisfies  $\delta^i(r,s)=\dim H^i(r,s) \neq 0$ for $i=1,2$. 
%  The discussion      immediately preceding this theorem implies that 
%  \[
%  (r,s) = (a,b) + p^n(t, -t-1)
%  \]
%   for some $1\leq t \leq p-1$, $n \geq 1$ and $0\leq a,b \leq p^n-2$.
%  By Proposition~\ref{prop:general-char-nonrecursive},
%  we may also assume that
%  $(r,s) = (pr_0,ps_0)$ or $(r,s) = (p-2 + pr_0,p-2+ps_0)$ for some $(r_0,s_0)$,  where 
%  \begin{align*}
%    D^i_{\zeta}(pr_0,ps_0) &= S^i(r_0,s_0)\\ 
%    D^i_{\zeta}(p-2+pr_0, p-2+ps_0)&=T^i(r_0,s_0).
%  \end{align*}  
  Let $(r',s') \in X(T)$ be any weight satisfying $\delta^i(r',s') \neq 0$ for $i=1,2$. 
   By Proposition~\ref{prop:general-char-nonrecursive},
  we may assume that
  $(r',s') = (pr,ps)$ or $(r',s') = (p-2 + pr,p-2+ps)$ where 
  \begin{align*}
    D^i_{\zeta}(pr,ps) &= S^i(r,s)\\ 
    D^i_{\zeta}(p-2+pr, p-2+ps)&=T^i(r,s).
  \end{align*}
 By applying $\tau$ as in Section~\ref{sec:transposition}, we can see that $D^i_{\zeta}(r',s') = D^i_{\zeta}(s',r')$, thus  
 we may assume $r'\geq s'$.
  From the discussion immediately preceding this theorem, it follows that 
  $$(r',s') = (a',b') + p^{n+1}(t, -t-1)$$ for some $1\leq t \leq p-1$, $n \geq 0$ and $0\leq a',b' \leq p^{n+1}-2$. 
 Thus,
   \begin{equation}\label{eqn:stuff}
  (r,s) = (a,b) + p^{n}(t, -t-1)
  \end{equation}
   for some $(a,b) \in X_n(T)$, where we define $X_0(T) := \{(0,0)\}$. 

  So the problem
  reduces to showing that  $S^i(r,s) \neq 0$ and $T^i(r,s) \neq 0$ for $i=1,2$ whenever $(r,s)$ is as in 
  \eqref{eqn:stuff}.   
  Now since $(r+1)(s+1) \leq 0$, then for some $0 \leq k \leq n$, we can write
  \[
  (r,s) = (x,y) + p^k(r_0,s_0)
  \]
  as in \eqref{eqn:p-adic_expansion}.
  It follows that 
  \[
    \{ |S^i(r_0,s_0)|, \, |T^i(r_0,s_0)| \} =\{ |S^i(r,s)|, \, |T^i(r,s)| \} 
  \]
  for $i=1,2$. 
 So without loss of generality, we can assume that $r+s=-1$, or equivalently, 
 \[
 (r,s) = (r,-r-1)
 \]
 for some $r \in \mathbb{Z}$. 
 Furthermore,  since $S^i(r,s) = S^i(s,r)$ and $T^i(r,s) = T^i(s,r)$, then by Serre Duality we get $S^2(r,-r-1)=T^1(r,-r-1)$.
  It suffices to show 
  $S^1(r,-r-1) \neq 0$ and $T^1(r,-r-1) \neq 0$ for any $r \geq 1$. 
  %Moreover, from the identity: $\chi^1(r,-r-1) -\chi^2(r,-r-1) = \chi^0(r-1,0)$ (see \cite[Proposition II.5.10]{jan2002}), and the fact that 
 By using Weyl's character formula (see  \cite[Proposition II.5.10]{jan2003})) and the above Serre duality statement,  it can be verified that 
 \[
 T^1(r,-r-1) -S^1(r,-r-1) = 1. 
 \]
 
 If we  assume that either $S^1(r,-r-1) = 0$ or $T^1(r,-r-1) = 0$, then
 \[
   T^1(r,-r-1) =1 + S^1(r,-r-1)
 \]
  implies that
  \[
    S^1(r,-r-1) = 0  \iff T^1(r,-r-1) =1
  \]
  and  
  \[
    T^1(r,-r-1) = 0 \iff S^1(r,-r-1) = -1.
    \]
  It follows that in either case
  we would have 
  \[
    S^1(r,-r-1) + T^1(r,-r-1) \leq 1
  \]
  and hence
  \[
    \phi^1(r,-r-1) + \psi^1(r,-r-1) =S^1(r,-r-1) + T^1(r,-r-1) \leq 1.
  \]
  
  However, by direct computation we can see that when $(r,-r-1) = (t,-t-1)$ with $1 \leq t \leq p-1$,
  \[
    S^1(t,-t-1) + T^1(t,-t-1) = t^2 + t + 1 >1.
  \]
  Thus, for any $(r,-r-1) = (a + p^nt, p^n-1 -a + p^n(-t-1))$ with $0 \leq a \leq p^n-1$, it follows
  inductively from Lemma~\ref{lem: monotonicity} that 
  \[
    S^1(r,-r-1) + T^1(r,-r-1) > 1.
  \]  
  Which gives a contradiction. Therefore, both $S^1(r,-r-1) >0$ and $T^1(r,-r-1) >0$ for all $r\geq 1$. 
\end{proof}

\section{Calculations for subregular weights}\label{sec:sreg_case}
\subsection{ }
In this section, we will show in Proposition~\ref{thm:proj_result} that if $\text{char}(k) =p \geq 3$ is arbitrary and 
$\lambda \in X(T)$ is any weight with $H^i(\lambda)\neq 0$ for some $i\geq 0$ , 
then $H^i(\lambda)|_{G_1}$ is not projective if and only if $\lambda$ does not lie in the Steinberg block. 
As we shall see, this fact, combined with the structure of the $G$-orbit closures on $\mathcal{N}$ and 
Theorem~\ref{thm:reg_result}, will uniquely determine the support varieties $V_{G_1}(H^i(\lambda))$ for any weight.

\subsection{Nilpotent orbits for $G=SL_3(k)$ }
By nilpotent orbit theory, it is well known that for all
primes $p$,
$\mathcal{N}$ has two nonzero orbit closures 
\[
  \mathcal{N} =\overline{\mathcal{O}} \supseteq \overline{\mathcal{O}}_{sreg} \supseteq \{0\}
\]
given by
\[ \mathcal{O} = G \cdot \begin{pmatrix} 
                            0 & 1 & 0 \\
                            0 & 0 & 1 \\
                            0 & 0 & 0 
                          \end{pmatrix}, \quad \mathcal{O}_{sreg} = G \cdot \begin{pmatrix} 
                                                                             0 & 1 & 0 \\
                                                                             0 & 0 & 0 \\
                                                                             0 & 0 & 0
                                                                           \end{pmatrix}.
\]
%In the special case where $p=2$, we consider the restricted nullcone  
%\[
%  \mathcal{N}_1 := \{ x \in \text{Lie}(G) \, \mid \, x^{[p]}=0 \}
%\]
%which is the orbit closure $\overline{\mathcal{O}}_{sreg}$ given above.

In Theorem~\ref{thm:reg_result} we demonstrated that when $\lambda$ is regular, 
$V_{G_1}(H^i(\lambda))=\overline{\mathcal{O}}$. Now assume that $\lambda$ is subregular,
then by Proposition~\ref{prop:npv_result}, it can be shown that
either $V_{G_1}(H^i(\lambda)) = \overline{\mathcal{O}}_{sreg}$, or $V_{G_1}(H^i(\lambda)) = \{0\}$
(i.e., $H^i(\lambda)$ is a projective $G_1$-module). 
Therefore, if $H^i(\lambda)|_{G_1}$ is not projective, then
$V_{G_1}(H^i(\lambda))=\overline{\mathcal{O}}_{sreg}$.  
This argument also shows that
if $\lambda$ is in the Steinberg block, then $V_{G_1}(H^i(\lambda))= \{0\}$. 
The computation of the support varieties $V_{G_1}(H^i(\lambda))$ is now reduced to
showing that $H^i(\lambda)|_{G_1}\neq 0$ is not projective provided that $\lambda$
is a subregular, non-Steinberg weight. 

 By applying \cite[Theorem 3.4.1]{npv2002}, we can see that if 
 $\psi_p(t)^s \nmid \dim_t H^i(\lambda)$, then 
 $$\dim V_{G_1} (H^i(\lambda)) \geq 6 -2(s-1).$$ 
 Thus, if $s \leq 3$, then  $\dim V_{G_1}(H^i(\lambda)) > 0$ since $p\geq 3$ implies that 
 $d(\Phi,p) = 0$.
 So to prove that $H^i(\lambda)$ is not projective as a $G_1$-module, 
 we only need to show that $\psi_p(t)^3 \nmid \dim_t H^i(\lambda)$. 
 
 Now if $\mathcal{L}(\lambda)$ has exactly one non-vanishing cohomology group, then 
 $\dim_t H^i(\lambda)$ is given by Weyl's generic dimension formula which can be used to show that 
 $s=2$. 
 To handle the case in which $\mathcal{L}(\lambda)$ has multiple non-vanishing cohomology groups,
 we will use the character formulas given in Section~\ref{sec:dim_formulas}.

 \subsection{ }
We observe that if $\lambda \in X(T)$ is any subregular, non-Steinberg weight, then 
 for some $(r,s) \in X(T)$ and $0\leq a \leq p-2$,  
  $\lambda$ must be one of the following:
\begin{align*}
  (p-1+pr, a +ps) \\
  (p-2-a + pr, p-1 + ps)\\
  (a+pr, p-2-a + ps)
\end{align*}
The character formulas in Proposition~\ref{prop:sreg_formulas} can be specialized to 
give the following generic dimension formulas. 
\begin{prop}
  Let $p\geq 2$ and $(r,s) \in X(T)$ be arbitrary, then
\begin{align*}
 D^i_t(p-1+pr, a+ps) &= D_t(p-1,a)D^i_{t^p}(r,s) + D_t(p-2-a,p-1)D_{t^p}^i(r,s-1) \\
          &\, + D_t(a,p-2-a)D^i_{\alpha,t^p}(r+1,s-1) \\
 D^i_t(p-2-a+pr, p-1+ps) &= D_t(p-2-a,p-1)D^i_{t^p}(r,s) + D_t(p-1,a)D_{t^p}^i(r-1,s) \\
          &\, + D_t(a,p-2-a)D^i_{\beta,t^p}(r-1,s+1) \\
 D^i_t(a+pr, p-2-a+ps) &= D_t(a,p-2-a)D^i_{t^p}(r,s) + D_t(p-1,a)D_{t^p}^i(r,s-1) \\
                           &\, + D_t(p-2-a,p-1)D_{t^p}(r-1,s) \\
                           &\,+  D_t(a,p-2-a)D^i_{t^p}(r-1,s-1)
\end{align*}
where $0 \leq a \leq p-2$.
\end{prop}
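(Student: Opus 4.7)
The plan is straightforward: apply the ring homomorphism $\varphi \colon \mathbb{Z}[X(T)] \to \mathbb{Z}[t, t^{-1}]$ defined in Section~\ref{sec:gen_dim} to each of the three character identities recorded in Proposition~\ref{prop:sreg_formulas}, and then read off the resulting generic dimension formulas. Since the proposition is essentially a direct specialization of the character formulas, no new representation-theoretic input is required.

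Three compatibilities make the translation mechanical. First, $\varphi$ is a ring homomorphism, so each product of two characters on the right-hand side of Proposition~\ref{prop:sreg_formulas} becomes the corresponding product of generic dimensions. Second, by construction $\varphi(\text{ch}\,V) = \dim_t V$ for every $T$-module $V$, so the characters $\chi(p-1,a)$, $\chi(p-2-a, p-1)$, and $\chi(a, p-2-a)$ of the relevant simple modules are carried to $D_t(p-1,a)$, $D_t(p-2-a, p-1)$, and $D_t(a, p-2-a)$, respectively. Third, as noted immediately after the first proposition of Section~\ref{sec:gen_dim}, $\varphi$ intertwines the twist $(-)^F$ on $\mathbb{Z}[X(T)]$ with the substitution $t \mapsto t^p$ on $\mathbb{Z}[t,t^{-1}]$, so that $\varphi(\chi^i(r,s)^F) = D^i_{t^p}(r,s)$, with the analogous identities for the $\alpha$- and $\beta$-bundles.

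With these in hand I would substitute term by term. For instance, in the first identity of Proposition~\ref{prop:sreg_formulas}, the three summands $\chi(p-1,a)\chi^i(r,s)^F$, $\chi(p-2-a,p-1)\chi^i(r,s-1)^F$, and $\chi(a,p-2-a)\chi^i_\alpha(r+1,s-1)^F$ become $D_t(p-1,a)\,D^i_{t^p}(r,s)$, $D_t(p-2-a,p-1)\,D^i_{t^p}(r,s-1)$, and $D_t(a,p-2-a)\,D^i_{\alpha,t^p}(r+1,s-1)$, while the left-hand side $\chi^i(p-1+pr, a+ps)$ is sent to $D^i_t(p-1+pr, a+ps)$. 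Performing the same specialization on the second and third identities of Proposition~\ref{prop:sreg_formulas} yields the remaining two formulas in the statement.

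There is no substantive obstacle here: the only work is bookkeeping, namely making sure each $\chi$-factor is sent to the correct $D$-factor with the right variable ($t$ for the small-weight simple-module factors and $t^p$ for the Frobenius-twisted cohomology factors). In particular, nothing in the argument uses $p \geq 3$ beyond what Proposition~\ref{prop:sreg_formulas} already assumes, which is consistent with the hypothesis $p \geq 2$ in the stated proposition.
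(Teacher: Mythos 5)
Your proposal is correct and is exactly the argument the paper intends: the proposition is stated as an immediate specialization of Proposition~\ref{prop:sreg_formulas} under the ring homomorphism $\varphi$, using that $\varphi(\mathrm{ch}\,V)=\dim_t V$ and that $\varphi$ intertwines $(-)^F$ with $t\mapsto t^p$. No further comment is needed.
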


For notational convenience, we shall define 
\begin{align*}
  h_{1,i}(t)&=D^i_t(p-1+pr, a+ps),\\ 
  h_{2,i}(t)&=D^i_t(p-2-a+pr, p-1+ps),\\ 
  h_{3,i}(t)&=D^i_t(a+pr, p-2-a+ps). 
\end{align*} 
So if $H^i(\lambda)$ is 
projective, then we must have that $\phi_p(t)^3 \mid h_j(t)$ and hence, 
$h_{j,i}'(\zeta) = h_{j,i}''(\zeta) = 0$ for some $j$. 
%By a standard argument involving \emph{translation functors} 
%(cf. \cite[II.7]{jan2003} for
%a discussion) it can be shown that
%projectivity does not depend on $a$ (i.e. support varieties are constant across facets) so our notation is well-defined. 
By explicitly calculating $h'(\zeta)$ and  $h_j''(\zeta)$, we will show that this only occurs when $H^i(\lambda)=0$.

We begin by calculating $h_j'(\zeta)$:
\begin{align*}
  h_{1,i}'(\zeta) &= \frac{2p(\zeta^{a+1}-\zeta^{-a-1})}{\zeta(\zeta-\zeta^{-1})^2(\zeta^2-\zeta^{-2})}
                    \left(\delta^i(r,s) + \delta^i(r,s-1)-\delta^i_{\alpha}(r+1,s-1)\right), \\
  h_{2,i}'(\zeta) &= \frac{2p(\zeta^{a+1}-\zeta^{-a-1})}{\zeta(\zeta-\zeta^{-1})^2(\zeta^2-\zeta^{-2})}
                    \left(\delta^i(r,s) + \delta^i(r-1,s)-\delta^i_{\beta}(r-1,s+1)\right), \\
  h_{3,i}'(\zeta) &= \frac{2p(\zeta^{a+1}-\zeta^{-a-1})}{\zeta(\zeta-\zeta^{-1})^2(\zeta^2-\zeta^{-2})}
                    \left(-\delta^i(r,s) -\delta^i(r-1,s-1)+ \delta^i(r,s-1)+\delta^i(r-1,s)\right).
\end{align*}
For simplicity, we define the integers
\begin{align*}
       Q_1^i(r,s) &= \delta^i(r,s) + \delta^i(r,s-1)-\delta^i_{\alpha}(r+1,s-1), \\
       Q_2^i(r,s) &= \delta^i(r,s) + \delta^i(r-1,s)-\delta^i_{\beta}(r-1,s+1), \\
       Q_3^i(r,s) &= -\delta^i(r,s) -\delta^i(r-1,s-1)+ \delta^i(r,s-1)+\delta^i(r-1,s).
\end{align*}
We observe that $h_{j,i}'(\zeta) = 0$ if and only if $Q_j^i(r,s) = 0$.

  %This example demonstrates that the multiplicity, $m$, that there are instances in which the multiplicity, $m$,
  %may vary for  one non-vanishing cohomology group. 
  %We can hope to prove in general is that if $H^i(\lambda_j)\neq 0$ then $(\dim_t H^i(\lambda))''(\zeta) \neq 0$. 
  %that if $H^i(\lambda) \neq 0$ then  $(\dim_t H^i(\lambda))'(\zeta) \neq 0$. In particular 

We now employ Lemma~\ref{lem:deriv-vanish} to calculate $h_{j,i}''(\zeta)$. 
First set
\begin{align*}
  g(t) &= (t-t^{-1})^2(t^2-t^{-2})\\
  p(t) &= (t^p - t^{-p}),\\
  f_1(t) &= (t^{a+1}-t^{-a-1})(t^{p+a+1}-t^{-p-a-1}) \\
  f_2(t) &= (t^ {p-1-a}-t^{-p+1+a})(t^{2p-1-a}-t^{-2p+1+a})\\
  f_3(t) &= (t^{a+1}-t^{-a-1})(t^{p-1-a}-t^{-p+1+a})
\end{align*}  
and also set 
\begin{align*}
q_1(t) &= D_t(p-1,a),\\ 
q_2(t) &= D_t(p-2-a, p-1),\\
q_3(t) &= D_t(a, p-2-a)
\end{align*}
so that
\[
  q_j(t) = \frac{p(t)f_j(t)}{g(t)}.
\]
Furthermore, since $p(\zeta)=0$ and $f_1(\zeta) = f_2(\zeta)=-f_3(\zeta)$, we get 
\[
  q_j''(\zeta) = \frac{p''(\zeta)f_j(\zeta)g(\zeta) -2p'(\zeta)f_j(\zeta)g'(\zeta) +2p'(\zeta)f_j'(\zeta)g(\zeta)}{g(\zeta)^2}.
  \]
  
 Now the derivatives $f_j'(\zeta)$ are given by
 \begin{align*}
f_1'(\zeta) &= (2a+2+p)(\zeta^{a+1}-\zeta^{-a-1})(\zeta^a + \zeta^{-a-2})\\
f_2'(\zeta) &= (2a+2-3p)(\zeta^{a+1}-\zeta^{-a-1})(\zeta^a + \zeta^{-a-2})\\
f_3'(\zeta) &= -(2a+2-p)(\zeta^{a+1}-\zeta^{-a-1})(\zeta^a + \zeta^{-a-2}).
\end{align*}
  Thus, if we set 
\begin{align*}
u_j(\zeta) &= f_j(\zeta)\frac{p''(\zeta)g(\zeta)-2p'(\zeta)g'(\zeta)}{g(\zeta)^2} \\
v(\zeta)  &= \frac{2p'(\zeta)(\zeta^{a+1}-\zeta^{-a-1})(\zeta^a + \zeta^{-a-2})}{g(\zeta)},
\end{align*}
then  $u(\zeta) = u_1(\zeta)=u_2(\zeta)=-u_3(\zeta)$ and
\begin{align*}
q_1''(\zeta) & = u(\zeta) + (2a+2+p)v(\zeta) \\
q_2''(\zeta) & = u(\zeta) + (2a+2-3p)v(\zeta) \\
q_3''(\zeta) &=  -u(\zeta) - (2a+2 -p)v(\zeta).
\end{align*}

Using the fact that $q_j(\zeta) =0$ and $(\dim_t M^{(1)})'(\zeta) = 0$ for any $G$-module $M$, 
we get
\begin{align*}
  h_{1,i}''(\zeta) & = u(\zeta)Q_1(r,s) + v(\zeta)( (2a+ 2 + p)\delta^i(r,s) +(2a+2-3p)\delta^i(r,s-1) \\
                               &- (2a+2-p)\delta^i_{\alpha}(r+1,s-1) ) \\
  h_{2,i}''(\zeta) & = u(\zeta)Q_2(r,s) + v(\zeta)( (2a+ 2 - 3p)\delta^i(r,s) +(2a+2+p)\delta^i(r-1,s)  \\
                          &- (2a+2-p)\delta^i_{\beta}(r-1,s+1) ) \\
  h_{3,i}''(\zeta) & = -u(\zeta)Q_3(r,s) + v(\zeta)( -(2a+ 2 - p)\delta^i(r,s) -(2a+2-p)\delta^i(r-1,s-1)  \\ 
                           &+(2a+2+p)\delta^i(r,s-1) + (2a+2-3p)\delta^i(r-1,s) ).
 \end{align*}
We define the integers
\begin{align*}
       R_1^i(r,s) &= \delta^i(r,s) - \delta^i(r,s-1) \\
       R_2^i(r,s) &= \delta^i(r,s) - \delta^i(r-1,s) \\
       R_3^i(r,s) &= \delta^i(r,s-1) -\delta^i(r-1,s).
\end{align*}

It can be immediately verified that $\psi_p(t)^3 \nmid h_{j,i}(t)$ if and only if
$Q_j^i(r,s)$ and $R_j^i(r,s)$ are not simultaneously zero.
 In the remaining part of this section, we will verify this for each 
$j$.

\subsection{ }\label{subsection:projective-equiv}
  We begin by observing that, unlike the case  when $\lambda$ is a regular weight,
  the multiplicity  of $\psi_p(t)$ as a divisor of $\dim_t(H^i(\lambda))$ is not constant
  across all $i$ such that $H^i(\lambda) \neq 0$.
  %For instance, when $\mathscr{L}(\lambda)$ has only one non-vanishing cohomology group,
  %it can be shown by Weyl's generic dimension formula
  %that the multiplicity $ m=1$. 
  For instance, if we let $p=7$, then
  $Q_3^1(27,-22) =0$, even though 
  \[
   H^1(a + 7(27),p-2-a + 7(-22))\neq 0
  \]
  for any $a$ with  $0 \leq a \leq 5$, so the multiplicity is at least $2$ in this case. However,  $Q_3^2(27,-22)\neq 0$ 
  so that when $i=2$, the multiplicity is 1.

\subsection{Calculations for weights of the form $(p-1+pr,a-1 +ps)$}\label{sec:Q_1} 
%Suppose that for $0 \leq a \leq p-2$,   $H^i(p-1 +pr, a-1 + ps)$ is projective over $G_1$ then we would have that 
%\[
%h_{1,i}'(\zeta) = h_{1,i}''(\zeta) = 0.
%\]
%Thus $Q_1(r,s) = 0$ which gives us 
%\[ \delta^i_{\alpha}(r+1,s-1) = \delta^i(r,s) + \delta^i(r,s-1)\]
% and so $h_{1,i}''(\zeta)=0$ will give us 
%\[
% (2a+2+p)\delta^i(r,s) + (2a+2-3p)\delta^i(r,s-1) - (2a+2-p)\delta^i_{\alpha}(r+1,s-1) = 0.
% \]
%Substituting for $\delta^i_{\alpha}(r+1,s-1)$ and simplifying we get  
%\[
%\delta^i(r,s)=\delta^i(r,s-1). 
%\]
Suppose that 
$$\lambda = (p-1+pr, a-1 + ps)$$ for some $0\leq a \leq p-2$ and 
$H^i(p-1+pr,a-1 + ps) \neq 0$, then to show that $H^i(p-1+pr,a-1+ps)|_{G_1}$ 
is not projective, it 
will be enough to show the following two expressions cannot vanish simultaneously
\begin{align*}
  Q_1^i(r,s) &= \delta^i(r,s)+\delta^i(r,s-1) -\delta^i_{\alpha}(r+1,s-1)\\
  R_1^i(r,s) &= \delta^i(r,s) - \delta^i(r,s-1).
\end{align*}

However, it can be verified by using essentially the same techniques that
were used in Section~\ref{sec:reg_case} to show $S^i(r,s) \neq 0$,
that 
$\delta^i(r,s)-\delta^i(r,s-1) \neq 0$ whenever 
$\delta^i(r,s) \neq 0$. 
In fact,
\[
  |R_1^i(r_0,s_0)| \leq |R_1^i(r,s)|
\]
for any $(r,s) = (a+pr_0,b+ps_0)$ where $(r_0+1)(s_0+1) <0$ and $(a,b) \in X_1(T)$.
It follows from the result in Section~\ref{subsection:projective-equiv} that
$H^i(p-1+pr,a-1+ps)|_{G_1}$ is not projective if it is nonzero.

\subsection{Calculations for weights of the form $(p-2-a+pr,p-1 +ps)$}\label{sec:Q_2}
%If for $0 \leq a \leq p-2$, $H^i(p-2-a + pr, p-1+ps)$ is $G_1$ projective then we get that 
%\[
%  h_{2,i}'(\zeta)=h_{2,i}''(\zeta) =0.
%\]
%Hence $Q_2(r,s) = 0$ giving us the relation
%\[
%  \delta^i_{\beta}(r-1,s+1) = \delta^i(r,s) + \delta^i(r-1,s)
%\]
%and from $h_{2,i}''(\zeta)=0$ we also get 
%\[
%  (2a+2-3p)\delta^i(r,s) + (2a+2+p)\delta^i(r-1,s) - \delta^i(2a+2-p)\delta^i_{\beta}(r-1,s+1)=0.
%\]
%This then gives us 
%\[
%  \delta^i(r,s) = \delta^i(r-1,s).
%\]
Now suppose that $$\lambda = (p-2-a+pr, p-1 + ps)$$ for some $0\leq a \leq p-2$ and 
$H^i(p-2-a+pr,p-1 + ps) \neq 0$, then to show that $H^i(p-2-a+pr,p-1+ps)|_{G_1}$ 
is not projective, it is sufficient 
to show the following two expressions cannot  simultaneously equal zero
\begin{align*}
  Q_2^i(r,s) &= \delta^i(r,s)+\delta^i(r-1,s) -\delta^i_{\beta}(r-1,s+1)\\
  R_2^i(r,s) &= \delta^i(r,s) - \delta^i(r-1,s).
\end{align*}

By applying $\tau$ as in Section~\ref{sec:transposition}, we see that 
\[
  R_2^i(r,s) = R_1^i(s,r) \neq 0.
  %\delta^i(r,s)-\delta^i(r-1,s) = \delta^i(s,r)-\delta^i(s,r-1)
\]
%$\delta^i(r,s) - \delta^i(r-1,s) \neq 0$ whenever $\delta^i(r,s) \neq 0$. 
So again, by the result in Section~\ref{subsection:projective-equiv},
we get that if $H^i(p-2-a+pr,p-1 +ps)|_{G_1}$ is not projective then it is nonzero.

\subsection{Calculations for weights of the form $(a+pr,p-2-a +ps)$}\label{sec:Q_3}
Finally, we consider the case when 
$$\lambda = (a + pr, p-2-a + ps)$$ for some $0\leq a \leq p-2$ and 
$H^i(a+pr,p-2-a + ps) \neq 0$.  To show, as before,  that $H^i(a+pr,p-2-a+ps)|_{G_1}$ 
is not projective, it 
will be enough to check that the following two expressions cannot simultaneously vanish
\begin{align*}
  Q^i_3(r,s) &= -\delta^i(r,s)-\delta^i(r-1,s-1) + \delta^i(r,s-1)+\delta^i(r-1,s)\\
  R^i_3(r,s) &= \delta^i(r,s-1)-\delta^i(r-1,s).
\end{align*}
%Now suppose that for $0 \leq a \leq p-2$, $H^i(a + pr, p-2-a + ps)$ is $G_1$ projective then 
%\[
%  h_{3,i}'(\zeta) = h_{3,i}''(\zeta)=0.
%\]
%So that $Q_3(r,s) = 0$ and thus 
%\[
%  \delta^i(r,s) + \delta^i(r-1,s-1) = \delta^i(r,s-1) + \delta^i(r-1,s)
%\]
%and $h_{3,i}''(\zeta) = 0$ gives us 
%\[
%  -(2a+2-p)[\delta^i(r,s) + \delta^i(r-1,s-1)] + (2a+2+p)\delta^i(r,s-1) + (2a+2-3p)\delta^i(r-1,s)=0.
%\]
%Which simplifies to 
%\[
%  \delta^i(r,s-1) = \delta^i(r-1,s).
%\]
%We obtain two expressions
%Thus $H^i(a+pr,p-2-a+p2)|_{G_1}$ is not projective if at least of these terms is nonzero. 
However, unlike the previous cases, we cannot always ensure that $R_3^i(r,s) \neq 0$ if $\delta^i(r,s) \neq 0$.
Instead we will demonstrate that whenever $Q_3^i(r,s) =0$, then it must follow that $R_3^i(r,s) \neq 0$.  

We proceed by first determining the precise weights $(r,s)$ where $Q_3^i(r,s)=0$. This is done in 
Proposition~\ref{prop:Q_3_vanish}, the proof of this proposition occupies the first four subsections of 
Section~\ref{sec:verif}. The main idea is to develop recursive expansion formulas for $Q_3^i(r,s)$
(cf. Proposition~\ref{prop:Q_3_formula}). In Proposition~\ref{prop:Q_3_fundline} we use these formulas to show 
that $Q_3^i(x,-x-1) \neq 0$ for $x \in \mathbb{Z}$ with $x \neq 0$. To extend this result to more types of weights, 
we generalize the expansion formula for $Q_3^i(r,s)$ in Lemma~\ref{prop:general_formula}. 
Then, with some additional technical lemmas, we are able to prove  Proposition~\ref{prop:Q_3_vanish}.

 Finally, in Proposition~\ref{prop:R_3_vanish} we show that $R_3^i(r,s)\neq 0$ whenever $Q_3^i(r,s)=0$.
 The proof follows from direct computation, which is made possible by the explicit description of the weights.   
% an explicit description of the weights is given in Proposition~\ref{prop:Q_3_vanish}.

\subsection{ }
We have just shown that in all three cases for $j$, $Q^i_j(r,s)$ and $R^i_j(r,s)$ do not simultaneously 
vanish. The next proposition is the main result of this section, it gives precise conditions for when $H^i(r,s)$ 
is a projective $G_1$-module. 
\begin{prop}\label{thm:proj_result}
  If $H^i(r,s) \neq 0$, then it is projective as a $G_1$-module  if and only if  $(r,s) \in X(T)$ is in the Steinberg block. 
\end{prop}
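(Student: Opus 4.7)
The plan is to establish each direction of the biconditional separately, with the harder (reverse) direction built on the complexity/divisibility bound already recorded in the text.

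For the direction ``Steinberg block implies projective'': I first observe that for $G = SL_3$ with $p \geq 3$, the condition $\Phi_\lambda = \Phi$ is equivalent to $\langle \lambda + \rho, \alpha^\vee \rangle \in p\mathbb{Z}$ for the two simple coroots of $A_2$, so every Steinberg block weight must be of the form $\lambda = (p-1+pr_0,\, p-1+ps_0)$ for some $(r_0,s_0) \in X(T)$. The Andersen--Haboush identity $\chi^i(p-1+pr_0,\,p-1+ps_0) = p^3 \chi^i(r_0,s_0)$ then exhibits $H^i(\lambda)$ as $\mathrm{St}_1 \otimes H^i(r_0,s_0)^{(1)}$ as a $G$-module, which is projective over $G_1$ because $\mathrm{St}_1$ is projective over $G_1$ and tensoring an arbitrary $G_1$-module with a projective $G_1$-module is projective.

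For the direction ``not Steinberg block implies not projective'' I plan to prove the contrapositive by bounding the multiplicity $s$ with which $\psi_p(t)$ divides $\dim_t H^i(\lambda)$ and then applying the inequality $c_{G_1}(H^i(\lambda)) \geq |\Phi| - d(\Phi,p) - 2(s-1) = 6 - 2(s-1)$ (since $d(\Phi,p) = 0$ for $p \geq 3$) from \cite{npv2002}, together with the fact that $\dim V_{G_1}(M) = c_{G_1}(M)$. If $\lambda$ is regular, Theorem~\ref{thm:reg_result} gives $\dim_\zeta H^i(\lambda) \neq 0$, so $s = 0$ (i.e., $\psi_p(t) \nmid \dim_t H^i(\lambda)$), and the bound forces $c_{G_1}(H^i(\lambda)) \geq 6$, hence $V_{G_1}(H^i(\lambda)) = \mathcal{N} \neq \{0\}$. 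If $\lambda$ is subregular but not in the Steinberg block, then $\lambda$ falls into one of the three families enumerated at the start of Section~\ref{sec:sreg_case}; the goal becomes to show that $\psi_p(t)^3 \nmid \dim_t H^i(\lambda) = h_{j,i}(t)$ for the appropriate $j \in \{1,2,3\}$.

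The key mechanism, already set up in the preceding subsections, is that $\psi_p(t)^3 \mid h_{j,i}(t)$ would force simultaneously $h_{j,i}'(\zeta) = 0$ and $h_{j,i}''(\zeta) = 0$, which by the explicit calculation (using Lemma~\ref{lem:deriv-vanish} to drop the twisted terms) is equivalent to $Q_j^i(r,s) = 0$ and $R_j^i(r,s) = 0$ holding together. I would then invoke Sections~\ref{sec:Q_1} and~\ref{sec:Q_2} for $j = 1, 2$, where $R_j^i(r,s) \neq 0$ is already established whenever $H^i(\lambda) \neq 0$, and invoke Section~\ref{sec:Q_3} together with the technical results of Section~\ref{sec:verif} (Propositions~\ref{prop:Q_3_vanish} and~\ref{prop:R_3_vanish}) for $j = 3$, where $R_3^i(r,s)$ can vanish but never simultaneously with $Q_3^i(r,s)$. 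In every case this yields $s \leq 2$, so $c_{G_1}(H^i(\lambda)) \geq 2 > 0$, and $H^i(\lambda)$ is not projective.

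The principal obstacle has already been absorbed into Section~\ref{sec:verif}: the third subregular family is the only one where a direct ``reduce to the fundamental line and induct'' argument on $R^i_j$ fails, so one must instead analyze the joint vanishing locus of $Q_3^i$ and $R_3^i$. Once that combinatorial input is in hand, the present proposition is essentially a clean assembly step — choose the correct $j$ from the form of $\lambda$, read off the nonvanishing of either $h_{j,i}'(\zeta)$ or $h_{j,i}''(\zeta)$, and conclude via the complexity bound.
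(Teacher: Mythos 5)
Your argument for the harder direction (``not in the Steinberg block $\Rightarrow$ not projective'') is essentially identical to the paper's: reduce via Theorem~\ref{thm:reg_result} in the regular case and, in the subregular case, rule out $\psi_p(t)^3 \mid \dim_t H^i(\lambda)$ by showing that $Q_j^i$ and $R_j^i$ never vanish simultaneously, then convert this into a positive lower bound on complexity via \cite[Theorem 3.4.1]{npv2002}. The only point worth flagging there is a citation imprecision: Theorem~\ref{thm:reg_result} is stated only for regular weights with \emph{two} non-vanishing cohomology groups, so for a weight with a single non-vanishing cohomology group (regular or subregular) you should instead appeal to Weyl's generic dimension formula, i.e.\ to Proposition~\ref{prop:npv_result}, as the paper does explicitly. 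Where you genuinely diverge is the easy direction: the paper deduces projectivity for Steinberg-block weights from the containment $V_{G_1}(H^i(\lambda)) \subseteq G\cdot\mathfrak{u}_I = \{0\}$ coming from the Strong Linkage Principle and Lemma~\ref{lem:linkage}, whereas you use the Andersen--Haboush identity to realize $H^i(\lambda)$ as $\mathrm{St}_1 \otimes H^i(r_0,s_0)^{(1)}$ and invoke projectivity of the Steinberg module over $G_1$. Both are correct; your route is more explicit and module-theoretic (it exhibits the projective structure directly), while the paper's route is uniform with the rest of its support-variety formalism and requires no identification of the $G$-module structure. Either way the proposition stands.
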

\begin{proof}
If $(r,s)$ is in the Steinberg block, then by Proposition~\ref{prop:npv_result} $V_{G_1}(H^i(r,s))=0$ and
 thus $H^i(r,s)|_{G_1}$ is projective. 
 Now for the other direction, we first note that 
  the case where $H^i(r,s) \neq 0$ for only one $i$ also follows from Proposition~\ref{prop:npv_result}.
  We can reduce to the case when $H^i(r,s)\neq 0$ for multiple $i$.   
  If $(r,s)$ is regular, the result is covered in Theorem~\ref{thm:reg_result}. The subregular case now follows
  by making the observation that if we write $(r,s) = (a+pr_0,b+ps_0)$ with $(a,b) \in X_1(T)$, then 
  it was shown in  this section that $R^i_j(r_0,s_0)$ and $S^i_j(r_0,s_0)$ cannot be simultaneously zero for $j=1,2,3$
  and thus $\psi_p(t)^3 \nmid \dim_t H^i(r,s)$.
\end{proof}
%From now on we assume that $k\geq 2$. We will now consider the first nontrivial case, which is when $k=2$
%so that
%\[
%  (r,s) = (a,p-1-a) + p(r_0,s_0)
%\]
%where $(r_0,s_0) = (p-1,p-1) + p(z,-z-1).$
%\begin{thm}
%Let $G = SL_3(k)$ with $\text{char}(k) \geq 3$ and let $\lambda \in X(T)$ be any weight not in the Steinberg block such that  $H^i(\lambda) \neq 0$ 
%then $H^i(\lambda)$ is not projective over $G_1$. 
%\end{thm}
%Combining this theorem with Andersen's vanishing result gives us a complete description of the protectiveness of
%all modules $H^i(\lambda)$ over $G_1$. This may be regarded as a vanishing theorem in the stable module category for $G_1$,
%since the projective modules are the zero objects. The orbit theory for $\mathcal{N}(\mathfrak{g})$ gives us the following corollary.
%\begin{cor}
%For $G= SL_3(k)$ with $p \geq 3$ let $\lambda \in X(T)$ be any weight and suppose $H^i(\lambda) \neq 0$ for some $i$, then 
%$ V_{G_1}H^i(\lambda) = \mathcal{N}(\mathfrak{g})$ if $\lambda$ is regular, $V_{G_1}H^i(\lambda) = \overline{\mathcal{O}_{sreg}}$ 
%if $\lambda$ lies on a codimension 1 facet and $V_{G_1}H^i(\lambda) = 0$ if $\lambda$ is in the Steinberg block.
%\end{cor}

\section{The $p=2$ case}\label{sec:alt}
\subsection{ }
There are a large class of weights with multiple non-vanishing cohomology groups whose support varieties can be computed 
by using somewhat more elementary techniques. In the case when $p=2$, this method will determine the support varieties for every weight. 
\subsection{An alternative method }
We shall assume throughout that $\text{char}(k)=p\geq 2$.
Let $E=L(0,1)$ be the dual of the standard representation, with weights
$(0,1)$, $(1,-1)$ and $(-1,0)$.  Now let $\lambda =(x,y) \in X(T)$ be any weight such that 
\[
\lambda = \mu + p^n(1,-1).
\]
Then $\mu=(x-p^n,y+p^n)$ has the property that 
\begin{align*}
\mu+(0,p^n)     &= (x-p^n, y + 2p^n) = \lambda + p^n\beta \in X(T)_+ \\
\mu + (-p^n,0) &= (x-2p^n,y+p^n)  = \lambda -p^n\alpha \in -X(T)_+.
\end{align*}

For example, if take any $(a,b) \in X_n(T)$, then the weight
\[
\lambda = (a,b) -p^n\beta
\]
satisfies all of these properties, since 
\begin{align*}
\lambda + p^n\beta &= (a,b) \in X(T)_+ \\
\lambda -p^n\alpha &= (a-p^n, b-p^n) \in -X(T)_+.
\end{align*} 

Now  we build two short exact sequences of $B$-modules
\begin{gather*}
 0 \rightarrow V^{(n)} \rightarrow L(0,p^n) \xrightarrow{\pi} (0,p^n) \rightarrow 0\\
 0 \rightarrow (-p^n,0) \rightarrow V^{(n)} \rightarrow (p^n,-p^n) \rightarrow 0
 \end{gather*}
 where $\pi: L(0,p^n) = L(0,1)^{(n)} \rightarrow (0,p^n)$ is the quotient map. 
If we tensor these short exact sequences with $\mu$, we get 
\begin{gather*}
 0 \rightarrow V^{(n)}\otimes \mu \rightarrow L(0,p^n)\otimes \mu \xrightarrow{\pi} \lambda + p^n\beta \rightarrow 0\\
 0 \rightarrow \lambda-p^n\alpha \rightarrow V^{(n)}\otimes \mu \rightarrow \lambda \rightarrow 0.
 \end{gather*}

Now apply the induction functor $\text{ind}_B^G-$  to the first short exact sequence to get 
\[
  0 \rightarrow H^0(\lambda + p^n\beta) \rightarrow H^1(V^{(n)}\otimes \mu) \rightarrow L(0,p^n)\otimes H^1(\mu) \rightarrow 0,
\]
where we used the fact that $H^0(\mu)=0$ since $\mu$ is non-dominant.
We  can also  see that the second short exact sequence gives us an isomorphism
\[
0 \rightarrow H^1(V^{(n)}\otimes\mu) \xrightarrow{~} H^1(\lambda) \rightarrow 0
\]
since $\lambda-p^n\alpha$ is anti-dominant. Using the isomorphism and substituting yields
\[
0\rightarrow H^0(\lambda + p^n\beta) \rightarrow H^1(\lambda) \rightarrow L(0,p^n)\otimes H^1(\mu) \rightarrow 0.
\]
So we observe that on the level on the characters
\[
\chi^1(\lambda) = \chi_p(0,p^n)\chi^1(\mu) + \chi^0(\lambda+p^n\beta).
\]

Now suppose that $p\geq 3$ and let  $\lambda \in X(T)$ be a regular weight., then by evaluating the generic dimension at a primitive $p^{th}$ root of unity, we get 
\[
D_{\zeta}^1(\lambda) =  3D_{\zeta}^1(\mu) + D_{\zeta}^0(\lambda + p^n\beta).
\]

We may assume without loss of generality that $\lambda$ is in the 0-block, so that the quantum quantum dimensions above are integers. 
%We can do this since we know that the support variety is constant across each facet. 
Using Weyl's generic dimension formula, one can verify that for any dominant  $\nu \in X(T)_+$ in the $0$-block 
$|D_{\zeta}^0(\nu)| =1$.
Thus
\[
D^1_{\zeta}(\lambda) = \pm 1 \text{ mod } 3
\]
and in particular, $D^1_{\zeta}(\lambda) \neq 0$ which implies $V_{G_1}(H^1(\lambda))=V_{G_1}$. 

Suppose now that $p \geq 2$ and that $\lambda$ is a subregular, non-Steinberg weight, then by Lemma~\ref{lem:deriv-vanish},
 setting
$f(t) = \dim_t L(0,p^n)$ gives us
\begin{align*}
   (f(t)D^1_t(\mu))'(\zeta) &= f'(\zeta)D^1_{\zeta}(\mu) + f(\zeta)(D^1_t(\mu))'(\zeta) \\
                            &= 3(D^1_t(\mu))'(\zeta).
\end{align*}
Thus
\[
  (D^1_t(\lambda))'(\zeta) = 3(D^1_t(\mu))'(\zeta) + (D^0_t(\lambda + p^n\beta))'(\zeta). 
\]
And by using a $\text{mod } 3$ argument we also get $(D^1_t(\lambda))'(\zeta) \neq 0$. Therefore 
$V_{G_1}(H^1(\lambda)) = \overline{\mathcal{O}_{sreg}}$. 

From Serre duality and by applying the automorphism $\tau$ from Section~\ref{sec:transposition},
we see that for any weight of the form 
\begin{equation}\label{eqn:p=2form}
  \lambda = (a,b) - p^n\beta = (a,b) - p^n(1,-2),
\end{equation}
$V_{G_1}(H^i(\lambda)) = V_{G_1}(H^0(w\cdot \lambda))$ where $w\in W$ and $w\cdot \lambda \in X(T)_+$. 

\subsection{A result for $p=2$.}
When $p=2$, we know by~\cite[Theorem 3.6]{and21979} that all of the weights $\lambda$ with two 
non-vanishing cohomology groups are given by Equation~\ref{eqn:p=2form}. Therefore, Theorem~\ref{thm:main_result} is immediately
verified for $p=2$. 

%In summary its important to note that this technique cannot be extended to weights of the form $\lambda= (a,b) -kp^n\beta$ for $k>1$
%and $(a,b) \in X_n(T)$
%when $p>2$. 
%I also doubt that one could use this technique to get information for more general algebraic groups, like $G=SL_4(k)$
%since in those cases we don't completely know the vanishing behavior for line bundle cohomology groups, making it very difficult
%to build long exact sequences which vanish in the correct places like we were able to do in the $SL_3(k)$ case. 

\section{Conjectures and open problems}\label{sec:conj}
\subsection{ }
For $p$-good the support varieties $V_{G_1}(H^0(\lambda))$ were precisely determined for all $\lambda \in X(T)_+$ 
 in \cite[Theorem 6.2.1]{npv2002} and for $p$-bad as well (cf.  \cite[Theorem 3.6]{uga2007} for the classical types and
 the tables in \cite[Section 4.2]{uga2007} for the exceptional cases).  
 The following conjecture extends the \cite{npv2002} result to the higher cohomology groups.
 %and would completely determine the support varieties $V_{G_1}(H^i(\lambda))$ for 
%all $\lambda \in X(T)$. 
It may be thought of as an analogue to the Borel-Bott-Weil Theorem for support varieties. 
\begin{conj}
  Let $G$ be semisimple, simply connected with $p$ good, and let $\lambda \in X(T)$ be arbitrary.
  Suppose $w \in W$ is such that $w\cdot \lambda \in X(T)_+$ and that $H^i(\lambda) \neq 0$ for some $i$,
  then
  $V_{G_1}(H^i(\lambda)) = V_{G_1}(H^0(w\cdot \lambda))$.
\end{conj}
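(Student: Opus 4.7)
The left inclusion $V_{G_1}(H^i(\lambda)) \subseteq V_{G_1}(H^0(w\cdot\lambda))$ costs nothing new. The Strong Linkage Principle ensures every composition factor of $H^i(\lambda)$ has the form $L(y\cdot\lambda)$ for some $y\in W_p$ with $y\cdot\lambda\in X(T)_+$, so Lemma~\ref{lem:linkage} yields $V_{G_1}(H^i(\lambda))\subseteq G\cdot\mathfrak{u}_I$ with $w(\Phi_\lambda)=\Phi_I$, and by \cite[Theorem 6.2.1]{npv2002} this is exactly $V_{G_1}(H^0(w\cdot\lambda))$.

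The real content is the reverse inclusion. When $H^i(\lambda)\neq 0$ for only one index $i$, Proposition~\ref{prop:npv_result} already supplies equality. Otherwise the plan is to establish, for each $i$ with $H^i(\lambda)\neq 0$, the divisibility bound
\[
\psi_p(t)^{s+1}\nmid \dim_t H^i(\lambda), \qquad s=\tfrac{1}{2}(|\Phi_I|-d(\Phi,p)),
\]
which is the precise multiplicity of $\psi_p(t)$ appearing in $\dim_t H^0(w\cdot\lambda)$ via Weyl's generic dimension formula. By \cite[Theorem 3.4.1]{npv2002} this forces $\dim V_{G_1}(H^i(\lambda))\geq |\Phi|-|\Phi_I|$, and combined with the left inclusion and the irreducibility of the orbit closure $G\cdot\mathfrak{u}_I$, yields the desired equality. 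Concretely, for $\lambda$ regular the task reduces to showing $\dim_\zeta H^i(\lambda)\neq 0$ at any primitive $p$-th root of unity $\zeta$, whereas for $\lambda$ subregular outside the Steinberg block it reduces to showing sufficiently few derivatives of $\dim_t H^i(\lambda)$ vanish at $\zeta$ to rule out projectivity of $H^i(\lambda)|_{G_1}$.

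I would attempt both steps by a two-pronged induction: on the semisimple rank of $G$ via Levi reduction (relating line bundle cohomology on $G/B$ to that on $L_I/B_I$ through the parabolic $P_I$), and on the $p$-adic depth of $\lambda$ via translation functors which reduce every weight to a canonical representative on each facet of the fundamental $W_p$-alcove. The Frobenius-twist compatibility $\dim_\zeta V^{(1)}=\dim V$ is what makes the recursion viable, mirroring the role it plays in the $SL_3$ argument. The principal obstacle is that Donkin's explicit recursive character formulas of Section~\ref{sec:dim_formulas} are particular to type $A_2$; for a uniform treatment one must replace them by an intrinsic mechanism, for instance an Andersen-Kaneda style Bott vanishing statement on Bott-Samelson varieties, or an argument based on Jantzen's sum formula together with tilting character data. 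The deepest difficulty, largely invisible in the $SL_3$ case, is that Euler-characteristic cancellations among multiple non-vanishing degrees $H^i(\lambda)$ can conceal the individual quantum dimensions; separating these degrees from one another, so as to certify the divisibility bound for each $i$ rather than merely for the alternating sum, is where the bulk of the new work will lie.
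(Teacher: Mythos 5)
The statement you are asked to prove is presented in the paper as an \emph{open conjecture}: the paper proves it only for $G=SL_2(k)$ (where it is immediate from Proposition~\ref{prop:npv_result}, since no weight has two non-vanishing cohomology degrees) and for $G=SL_3(k)$ (Theorem~\ref{thm:main_result}), and explicitly remarks that the explicit character formulas on which the $SL_3$ argument rests are not available in general. Your proposal correctly reproduces the parts of the argument that are already known: the left inclusion via the Strong Linkage Principle, Lemma~\ref{lem:linkage} and \cite[Theorem 6.2.1]{npv2002}; the single-non-vanishing-degree case via Proposition~\ref{prop:npv_result}; and the reduction of the reverse inclusion to the divisibility bound $\psi_p(t)^{s+1}\nmid \dim_t H^i(\lambda)$ with $s=\frac{1}{2}(|\Phi_I|-d(\Phi,p))$, which is exactly the framework of Sections~\ref{sec:reg_case} and~\ref{sec:sreg_case}.

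The gap is that the core of the reverse inclusion --- certifying the divisibility bound for \emph{each individual} $i$ when $\mathscr{L}(\lambda)$ has several non-vanishing cohomology groups --- is not carried out. Your proposed two-pronged induction (Levi reduction plus translation to alcove representatives) is a research program, not an argument: no mechanism is supplied that actually computes or bounds $\dim_\zeta H^i(\lambda)$ degree by degree, and you yourself identify the fatal obstruction, namely that the Euler characteristic only controls the alternating sum of the quantum dimensions, so cancellation between degrees can hide exactly the quantity you need. In the $SL_3$ case this is overcome only by Donkin's explicit type-$A_2$ recursions (Section~\ref{sec:dim_formulas}), which separate the degrees by brute force; the paper notes that already in type $G_2$ the analogous recursions break down because of higher-rank bundles whose expansions cannot be established. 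Until a substitute for these formulas is produced, the proposal does not constitute a proof of the conjecture.
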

The case where $G=SL_2(k)$ actually follows from Proposition~\ref{prop:npv_result} since there are no weights $\lambda$
such that $H^i(\lambda) \neq 0$ and $H^j(\lambda) \neq 0$ with $i\neq j$.
The $G=SL_3(k)$ case is given by Theorem~\ref{thm:main_result}, which is the main result of this paper. 

For general semisimple simply connected groups $G$, we showed in Proposition~\ref{prop:npv_result} 
that for all 
$G$,  $\cup_{i\geq 0} V_{G_1}(H^i(\lambda))= V_{G_1}(H^0(w\cdot\lambda))$,
which implies that 
\[
  V_{G_1}(H^j(\lambda))= V_{G_1}(H^0(w\cdot\lambda))
\] for some $j$. However, the proof does not guarantee equality
for all $i$ where $H^i(\lambda)\neq0$. 
\subsection{ }
To try to handle this general case, one might hope to obtain recursive character formulas of the kind given in this paper. 
A recent preprint (cf. \cite{DONK2012}) works out the formulas in type $B_2$ when $\text{char}(k)=2$.
 Also there have been some partial calculations  for these formulas when 
$G$ is the simple, simply connected group of type $G_2$ and $\text{char}(k) =2$ (cf. \cite{ANWAR}). In that case 
the main obstruction was that the expressions for 
$\text{ch}\,H^i(\lambda)$ involved certain rank 4 vector bundles whose recursive formulas could not be established. 
So it appears that determining explicit formulas for $\text{ch}\, H^i(\lambda)$ when  $G$ is arbitrary is an extremely difficult problem.

\section{Appendix: Verification of formulas for the subregular case}\label{sec:verif}
In this section we will carefully verify a number of the 
technical lemmas which were used in the key results from Section~\ref{sec:sreg_case}.
It is useful to note that the terms $S^i(r,s)$, $T^i(r,s)$, $\phi^i(r,s)$ and $\psi^i(r,s)$ appearing in
this section are the same as in Section~\ref{sec:reg_case}.
\subsection{Recursive expansions and vanishing results for $Q_3^i(r,s)$}
To determine where $Q^i_3(r,s)$ vanishes, we will begin by computing their recursive expansion formulas.
%to study the $S^i(r,s)$ and $T^i(r,s)$ expressions.
\begin{prop}\label{prop:Q_3_formula}
  Let $(r,s) = (a+pr_0, b + ps_0)$ and $a+b < p-1$. Then
  \[
    Q_3^i(r,s) = -(a+b-(p-1))S^i(r_0,s_0) + pQ_3^i(r_0,s_0).
  \]
  If $a+b =p-1$ then
  \[
    Q_3^i(r,s) = -\frac{1}{2}a(a+1)[S^i(r_0,s_0)+T^i(r_0,s_0)] - \frac{1}{2}p(p-1-2a)\phi^i(r_0,s_0) + pQ_3^i(r_0,s_0).
  \]
  If $a+b > p-1$ then
  \[
    Q_3^i(r,s) = (a+b-(p-1))T^i(r_0,s_0)+pQ_3^i(r_0,s_0).
  \]
  Lastly, if $\phi^i(r_0,s_0)=\psi^i(r_0,s_0)=0$, which occurs whenever $r_0+s_0 \neq -1$, then 
  for all $(a,b)$ 
  \begin{align*}
    Q_3^i(r,s) &= (a+b-(p-1))T^i(r_0,s_0)+pQ_3^i(r_0,s_0) \\
               &= -(a+b-(p-1))S^i(r_0,s_0)+pQ_3^i(r_0,s_0).
  \end{align*}
\end{prop}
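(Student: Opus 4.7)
The plan is to mimic the proof strategy used for Propositions~\ref{prop:bottom_alcove}, \ref{prop:top_alcove}, and \ref{prop:fund_line}: expand each of the four $\delta^i$ terms appearing in
\[
Q_3^i(r,s) = -\delta^i(r,s) - \delta^i(r-1,s-1) + \delta^i(r,s-1) + \delta^i(r-1,s)
\]
by substituting the appropriate recursive character formula from Section~\ref{sec:dim_formulas} (specializing each $\chi^i$ to its ordinary dimension $\delta^i$), then collect terms and recognize the result as a linear combination of $S^i(r_0,s_0)$, $T^i(r_0,s_0)$, $\phi^i(r_0,s_0)$, $\psi^i(r_0,s_0)$, and $Q_3^i(r_0,s_0)$.

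More concretely, I would first enumerate the cases for $(a,b) \in X_1(T)$ exactly as in the proof of Proposition~\ref{prop:bottom_alcove}: the fourteen cases when $a+b<p-1$, the twelve cases when $a+b>p-1$, and the five cases when $a+b=p-1$. In each case the correct recursive formula (regular, subregular, or Steinberg, depending on whether the various shifts of $(a,b)$ land in the bottom alcove, the top alcove, the fundamental line, or the Steinberg block) is substituted for each of the four terms $\delta^i(r,s)$, $\delta^i(r-1,s-1)$, $\delta^i(r,s-1)$, $\delta^i(r-1,s)$. After simplification, the coefficients of the resulting $\delta^i(r_0,s_0)$-type expressions are matched against the definitions of $S^i$, $T^i$, $\phi^i$, $\psi^i$, $Q_3^i$ given in Section~\ref{sec:reg_case} and the current section. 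The coefficients $-(a+b-(p-1))$ and $p$ appearing in the statement are precisely the shapes one would predict by a degree count: each $\delta^i$ on the left is decomposed into a "principal" piece (which contributes the $p$-scaled $Q_3^i$) and several "boundary" pieces (which align with $S^i$ or $T^i$, scaled by how far $a+b$ is from $p-1$).

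The fundamental-line case $a+b = p-1$ is the most delicate, because this is the regime in which the $\phi^i$ and $\psi^i$ terms do not vanish. Here one reuses the combinatorics already established in Proposition~\ref{prop:fund_line} and Proposition~\ref{prop:phi_rec}: the coefficient $\tfrac{1}{2}a(a+1)$ of $S^i(r_0,s_0)+T^i(r_0,s_0)$ and the coefficient $\tfrac{1}{2}p(p-1-2a)$ of $\phi^i(r_0,s_0)$ match the structure of those earlier formulas, so each of the five subcases $x=0,1,2\le x\le p-3, x=p-2, x=p-1$ can be verified by the same type of substitution-and-collection argument.

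Finally, the last clause follows formally: whenever $\phi^i(r_0,s_0) = \psi^i(r_0,s_0) = 0$, identity (\ref{eq: phi-psi identity}) forces $S^i(r_0,s_0) + T^i(r_0,s_0) = 0$, so $T^i(r_0,s_0) = -S^i(r_0,s_0)$, and the $a+b<p-1$, $a+b=p-1$, and $a+b>p-1$ formulas collapse into the single displayed identity. The main obstacle is purely bookkeeping: one must execute the substitution in each of roughly thirty subcases without typographical errors, since the recursive formulas in Section~\ref{sec:dim_formulas} involve many terms whose sign and index structure must be tracked carefully. This is exactly the same obstacle encountered (and overcome by direct enumeration) in the proofs of Propositions~\ref{prop:bottom_alcove}--\ref{prop:fund_line}, and no new conceptual difficulty appears.
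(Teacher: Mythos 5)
Your proposal is correct and matches the paper's approach: the paper in fact states Proposition~\ref{prop:Q_3_formula} without writing out a proof, relying on exactly the case-by-case substitution of the recursive character formulas that it carried out for Propositions~\ref{prop:bottom_alcove}--\ref{prop:phi_rec}, which is what you describe. Your observation that the final clause follows formally from identity~(\ref{eq: phi-psi identity}) (since $\phi^i(r_0,s_0)=\psi^i(r_0,s_0)=0$ forces $T^i(r_0,s_0)=-S^i(r_0,s_0)$ and kills the extra terms in the $a+b=p-1$ formula) is also correct.
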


\subsection{}
These formulas can be used to show that $Q_3^i(r,s) \neq 0$ for any $(r,s) = (x,-x-1)$ with $x \in \mathbb{Z}$.
We begin with the following lemma. 
\begin{lem}
  If $\phi^i(r,s) \geq 0$ and $\psi^i(r,s) \geq 0$, then 
  \[
    \frac{1}{2}a(a+1)[S^i(r,s)+T^i(r,s)] + \frac{1}{2}p(p-1-2a)\phi^i(r,s)\geq 0. 
  \]
 \end{lem}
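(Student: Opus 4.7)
The plan is to reduce the stated inequality to a purely numerical one about a quadratic polynomial in $a$ with coefficients in $p$, and then invoke the same analytic/integrality trick that appeared in the proof of Lemma~\ref{lem: monotonicity}. First I would use the identity \eqref{eq: phi-psi identity}, namely
\[
\phi^i(r,s) + \psi^i(r,s) = S^i(r,s) + T^i(r,s),
\]
to rewrite the left-hand side purely in terms of $\phi^i(r,s)$ and $\psi^i(r,s)$:
\[
\tfrac{1}{2}a(a+1)[S^i(r,s)+T^i(r,s)] + \tfrac{1}{2}p(p-1-2a)\phi^i(r,s) = \tfrac{1}{2}\bigl[a(a+1)+p(p-1-2a)\bigr]\phi^i(r,s) + \tfrac{1}{2}a(a+1)\psi^i(r,s).
\]

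The coefficient of $\psi^i(r,s)$ is $\tfrac{1}{2}a(a+1)$, which is manifestly nonnegative (recall that in the relevant application $a$ is an integer in $[0,p-1]$, and in fact it is nonnegative for every integer $a$). Since $\psi^i(r,s) \geq 0$ by hypothesis, the second term is $\geq 0$, so it suffices to prove that the coefficient of $\phi^i(r,s)$ is nonnegative; together with $\phi^i(r,s)\geq 0$ this will give the result.

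For the coefficient of $\phi^i(r,s)$, set $g(a,p) = a(a+1)+p(p-1-2a)$. Treating $p$ as a fixed positive parameter and regarding $g(a,p)$ as a real-valued quadratic in $a$, I would complete the square:
\[
g(a,p) = a^2 + (1-2p)a + (p^2-p) = \left(a - \tfrac{2p-1}{2}\right)^{\!2} - \tfrac{1}{4}.
\]
Hence $g(a,p) \geq -\tfrac{1}{4}$ for all $a \in \mathbb{R}$. Since $g(a,p) \in \mathbb{Z}$ whenever $a \in \mathbb{Z}$, this forces $g(a,p) \geq 0$ for all $a \in \mathbb{Z}$, so the coefficient $\tfrac{1}{2}g(a,p)$ is nonnegative, completing the proof.

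There is no real obstacle here: the proof is a direct algebraic rearrangement followed by the same completing-the-square argument already used in Lemma~\ref{lem: monotonicity}. The only point requiring mild care is making sure that the identity \eqref{eq: phi-psi identity} is applied correctly so that the coefficient of $\psi^i(r,s)$ comes out as the nonnegative quantity $\tfrac{1}{2}a(a+1)$ rather than something with a sign ambiguity; this is the step where one has to be careful.
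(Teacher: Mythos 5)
Your proposal is correct and follows essentially the same route as the paper's own proof: apply the identity $\phi^i+\psi^i=S^i+T^i$ to collect the expression as $\tfrac{1}{2}[a(a+1)+p(p-1-2a)]\phi^i(r,s)+\tfrac{1}{2}a(a+1)\psi^i(r,s)$, drop the nonnegative $\psi^i$ term, and show the quadratic coefficient $a^2+(1-2p)a+p^2-p=\left(a-\tfrac{2p-1}{2}\right)^2-\tfrac{1}{4}$ is nonnegative on integers by integrality. No differences worth noting.
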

 \begin{proof} 
   We first observe that 
   \[
     S^i(r,s) + T^i(r,s) = \phi^i(r,s) + \psi^i(r,s)
   \]
   and so
   \begin{align*}
    &\frac{1}{2}a(a+1)[S^i(r,s)+T^i(r,s)] + \frac{1}{2}p(p-1-2a)\phi^i(r,s)  \\
    &\qquad =\frac{1}{2}[a(a+1) + p(p-1-2a)]\phi^i(r,s) + \frac{1}{2}a(a+1)\psi^i(r,s)\\ 
    &\qquad =\frac{1}{2}[a^2 + (1-2p)a + p^2-p]\phi^i(r,s) + \frac{1}{2}a(a+1)\psi^i(r,s) \\ 
    &\qquad \geq \frac{1}{2}[a^2 + (1-2p)a + p^2-p]\phi^i(r,s). 
  \end{align*}
  Since we are assuming that $\phi^i(r,s) \geq 0$, it will be sufficient to show that 
  \[
   f(a,p)= a^2 + (1-2p)a + p^2-p \geq 0
  \]
  for any $a,p \in \mathbb{Z}$. 
  The idea is that for each fixed prime $p$, we can regard $f(a,p)$ as a quadratic polynomial in $a$. 
  If we now allow $a$ to be any real number, then we get 
  \[
    f(a,p)=\left(a-\frac{2p-1}{2}\right)^2 -\frac{1}{4}.
  \]
  Hence, $f(a,p) \geq -\frac{1}{4}$ for any $a \in \mathbb{R}$. However, since we know that $f(a,p) \in \mathbb{Z}$
  for $p$ prime and $a \in \mathbb{Z}$, it follows that $f(a,p) \geq 0$. 
\end{proof}
We also need the following lemma. 
\begin{lem}\label{lem:some_lemma}
  Suppose $(r,s) = (a + pr_0, p-1-a + ps_0)$ where  $0 \leq a \leq p-1$, $(r_0+1)(s_0+1) < 0$ and
  $Q^i_3(r_0,s_0) < 0$, then 
  \[ 
    Q_3^i(r,s) < Q_3^i(r_0,s_0). 
  \]
  If we replace $<$ with $\leq$ above, this statement also holds. 
\end{lem}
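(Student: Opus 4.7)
The plan is to specialize the middle case of Proposition~\ref{prop:Q_3_formula} to the weight $(r,s) = (a+pr_0,(p-1-a)+ps_0)$, since the choice $b=p-1-a$ places us exactly in the $a+b=p-1$ regime. The formula yields
\[
Q_3^i(r,s) - Q_3^i(r_0,s_0) = (p-1)Q_3^i(r_0,s_0) - \tfrac{1}{2}a(a+1)\bigl[S^i(r_0,s_0)+T^i(r_0,s_0)\bigr] - \tfrac{1}{2}p(p-1-2a)\phi^i(r_0,s_0).
\]
The term $(p-1)Q_3^i(r_0,s_0)$ is strictly negative by the hypothesis $Q_3^i(r_0,s_0)<0$ together with $p\geq 2$. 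To close out $Q_3^i(r,s) < Q_3^i(r_0,s_0)$ it therefore suffices to show that the correction
\[
-\tfrac{1}{2}a(a+1)\bigl[S^i(r_0,s_0)+T^i(r_0,s_0)\bigr] - \tfrac{1}{2}p(p-1-2a)\phi^i(r_0,s_0)
\]
is non-positive, which is precisely the content of the immediately preceding lemma, provided $\phi^i(r_0,s_0)\geq 0$ and $\psi^i(r_0,s_0)\geq 0$.

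The non-negativity of $\phi^i(r_0,s_0)$ and $\psi^i(r_0,s_0)$ I would handle by splitting on whether $(r_0,s_0)$ lies on the fundamental line. If $r_0+s_0\neq -1$, Proposition~\ref{prop:phi_rec} gives $\phi^i(r_0,s_0)=\psi^i(r_0,s_0)=0$ outright, and the correction then vanishes identically. If instead $r_0+s_0=-1$, then the hypothesis $(r_0+1)(s_0+1)<0$ forces $r_0\geq 1$ or $r_0\leq -2$, so $(r_0,s_0)$ is a non-trivial fundamental-line weight. I would expand $(r_0,s_0)$ $p$-adically as in~\eqref{eqn:p-adic_expansion} and peel off digits using Lemma~\ref{lem: monotonicity}, which propagates non-negativity outward from an innermost base point of the form $(t,-t-1)$ with $1\leq t\leq p-1$.

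The variant with $\leq$ in place of $<$ is proved identically: when $Q_3^i(r_0,s_0)\leq 0$ one only has $(p-1)Q_3^i(r_0,s_0)\leq 0$ in place of strict negativity, but combined with the same non-positivity of the correction terms, one obtains the weak inequality $Q_3^i(r,s)\leq Q_3^i(r_0,s_0)$.

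The main obstacle is the base-case non-negativity $\phi^i(t,-t-1),\psi^i(t,-t-1)\geq 0$ for $1\leq t\leq p-1$. The calculation in Section~\ref{sec:reg_case} established only that the \emph{sum} $\phi^1(t,-t-1)+\psi^1(t,-t-1)=t^2+t+1$ is positive, which does not by itself yield the individual non-negativities. To fill this gap I would perform a direct computation using the character formulas of Section~\ref{sec:dim_formulas}, invoking the $\tau$-symmetry $\phi^i(r,s)=\psi^i(s,r)$ from~\eqref{eq: phi-psi identity} to halve the work, and exploiting the fact that for such small $(r_0,s_0)$ the relevant modules $H^i(\mu)$ lie in a sufficiently explicit range of alcoves that each term of $\phi^i$ and $\psi^i$ can be read off from the Weyl and Andersen-Haboush formulas.
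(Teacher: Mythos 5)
Your proposal is correct and follows the same route as the paper's proof: substitute the $a+b=p-1$ case of Proposition~\ref{prop:Q_3_formula}, observe via the preceding lemma that the correction term is non-positive, and conclude $Q_3^i(r,s)\leq pQ_3^i(r_0,s_0)<Q_3^i(r_0,s_0)$ (with the weak version following identically). Worth noting: the paper's proof is exactly this and stops there, applying the preceding lemma at $(r_0,s_0)$ without verifying its hypotheses $\phi^i(r_0,s_0)\geq 0$ and $\psi^i(r_0,s_0)\geq 0$; your extra step --- reducing via Proposition~\ref{prop:phi_rec} to the case $r_0+s_0=-1$, propagating non-negativity with Lemma~\ref{lem: monotonicity}, and checking the base points $(t,-t-1)$, $1\leq t\leq p-1$, directly from the character formulas --- supplies precisely the verification the paper leaves implicit, and it is genuinely needed since Section~\ref{sec:reg_case} only records positivity of the sum $\phi^1+\psi^1$ there.
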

\begin{proof}
Since $Q^i(r,s)$ is given by the formula 
  \[
    Q_3^i(r,s) = -\frac{1}{2}a(a+1)[S^i(r_0,s_0)+T^i(r_0,s_0)] - \frac{1}{2}p(p-1-2a)\phi^i(r_0,s_0) + pQ_3^i(r_0,s_0),
  \]
by the preceding lemma, we know that
\[
-\frac{1}{2}a(a+1)[S^i(r_0,s_0)+T^i(r_0,s_0)] - \frac{1}{2}p(p-1-2a)\phi^i(r_0,s_0) \leq 0.
\]
%Because it was proved earlier that $\phi^i(r_0,s_0) \geq 0$ and $\psi^i(r_0,s_0) \geq 0$ in general. 
Therefore, 
\[
  pQ_3^i(r_0,s_0) < Q_3^i(r_0,s_0) < 0.
\]
\end{proof}
These two lemmas can be applied to prove our first non-vanishing result for $Q^i_3(r,s)$. 
\begin{prop}\label{prop:Q_3_fundline}
For any integer $x \neq 0$,
\[
  Q^1_3(x,-x-1) = Q^2_3(x,-x-1) < 0.
\]
\end{prop}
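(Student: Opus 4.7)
The plan is to combine a Serre duality argument for the equality $Q_3^1(x,-x-1) = Q_3^2(x,-x-1)$ with induction on $|x|$ for the strict negativity. For the equality, Serre duality on $G/B$ (of dimension $3$) provides the numerical identity $\delta^i(r,s) = \delta^{3-i}(-r-2,-s-2)$. Combining this with the $\tau$-symmetry $\delta^i(r,s) = \delta^i(s,r)$ from Section~\ref{sec:transposition}, a direct substitution shows that each $\delta^2$ term in $Q_3^2(x,-x-1)$ coincides with a matching $\delta^1$ term in $Q_3^1(x,-x-1)$, after which the four summands rearrange into $Q_3^1(x,-x-1)$.

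For the negativity, I would prove $Q_3^1(x,-x-1) < 0$ by induction on $|x|$, with base cases $1 \leq x \leq p-1$ (and $-p \leq x \leq -2$ reduced to these by the symmetry $Q_3^i(x,-x-1) = Q_3^i(-x-1,x)$, another consequence of $\tau$). For $1 \leq x \leq p-1$, the Bott-Borel-Weil theorem combined with Weyl's dimension formula yields $\delta^1(x,-x-1) = x(x+1)/2$ via the linkage $s_\beta \cdot (x,-x-1) = (0,x-1)$, while the three other weights $(x-1,-x-2)$, $(x,-x-2)$, $(x-1,-x-1)$ all lie on reflection walls of $W_p$ (on the $\beta$-, $\alpha$-, and $(\alpha+\beta)$-walls respectively), forcing their $\delta^1$ to vanish. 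Thus $Q_3^1(x,-x-1) = -x(x+1)/2 < 0$ on the entire base range.

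For the inductive step, write $(x,-x-1) = (a, p-1-a) + p(x',-x'-1)$ with $0 \leq a \leq p-1$ and $x' = \lfloor x/p \rfloor$; on the fundamental line this is exactly the $a+b=p-1$ case of Proposition~\ref{prop:Q_3_formula}. When $|x| \geq p$, the value $x'$ satisfies $x' \notin \{0,-1\}$, so $(x'+1)(-x') < 0$, and together with the inductive hypothesis $Q_3^i(x',-x'-1) < 0$, Lemma~\ref{lem:some_lemma} yields $Q_3^i(x,-x-1) < Q_3^i(x',-x'-1) < 0$, completing the induction. The main obstacle is the BWB-based base case calculation, which requires a careful wall-by-wall analysis to confirm that exactly three of the four terms in $Q_3^1(x,-x-1)$ vanish; once this is secured, the recursion from Lemma~\ref{lem:some_lemma} strictly strengthens negativity at each step, so the induction runs without further complication.
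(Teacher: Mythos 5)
Your overall architecture is the same as the paper's: reduce to $x>0$ via $\tau$, establish the base value $Q_3^1(x,-x-1)=-\tfrac{1}{2}x(x+1)$ for $1\le x\le p-1$, and induct by writing $(x,-x-1)=(a,p-1-a)+p(x_0,-x_0-1)$ and applying Lemma~\ref{lem:some_lemma}. Your Serre-duality verification of the equality $Q_3^1(x,-x-1)=Q_3^2(x,-x-1)$, via $\delta^i(r,s)=\delta^{3-i}(-r-2,-s-2)$ combined with $\delta^i(r,s)=\delta^i(s,r)$, is correct and is a genuine addition, since the paper asserts that equality without comment.

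However, your justification of the base case --- precisely the step the paper leaves unproved and which you identify as the main obstacle --- does not hold up as written. First, $(x-1,-x-2)$ lies on no reflection wall at all: with $\lambda+\rho=(x,-x-1)$ one has $\langle\lambda+\rho,\alpha^{\vee}\rangle=x$, $\langle\lambda+\rho,\beta^{\vee}\rangle=-x-1$, $\langle\lambda+\rho,(\alpha+\beta)^{\vee}\rangle=-1$, none of which vanish for $1\le x\le p-1$. Its $H^1$ vanishes for a different reason: $s_{\alpha}s_{\beta}\cdot(x-1,-x-2)=(0,x-1)$ with $\ell(s_{\alpha}s_{\beta})=2$, so the Euler characteristic sits in degree $2$, and Andersen's criterion (recalled before Theorem~\ref{thm:reg_result}) rules out having both $H^1\neq 0$ and $H^2\neq 0$ in this range, forcing $\delta^1=0$ and $\delta^2=\tfrac{1}{2}x(x+1)$. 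Second, the two weights that genuinely are singular, $(x,-x-2)$ and $(x-1,-x-1)$, both lie on the $(\alpha+\beta)$-wall (not the $\alpha$-wall), and since $\alpha+\beta$ is not simple, the vanishing statement \cite[II.5.5]{jan2003} does not apply directly in characteristic $p$; here too one needs the vanishing Euler characteristic together with $H^0=H^3=0$ and Andersen's non-multivanishing criterion to conclude $\delta^1=\delta^2=0$. (And note that lying on a wall of the \emph{affine} Weyl group $W_p$, which is what you literally invoked, never forces cohomology to vanish --- the Steinberg weight is the standard counterexample.) The same Euler-characteristic-plus-Andersen argument, rather than Bott--Borel--Weil, is what pins down $\delta^1(x,-x-1)=\tfrac{1}{2}x(x+1)$ and $\delta^2(x,-x-1)=0$. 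Once the base case is repaired along these lines, your inductive step runs exactly as in the paper.
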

\begin{proof}
  Without loss of generality, we can assume that $x >0$. 
  This follows by applying
  the automorphism $\tau$ which was  introduced in Section~\ref{sec:transposition}.
 We first consider the base case where 
 $1 \leq x \leq p-1$, 
 \[
   Q_3^1(x,-x-1) =-\frac{1}{2}x(x+1) <0.
 \]
 If $x \geq p$, we can write 
 \[
   (x,-x-1) = (a, p-1-a) + p(x_0,-x_0-1)
 \]
 for some $0 \leq a \leq p-1$. Hence, if we assume that 
 $Q_3^1(x_0,-x_0-1) <0$, then by  Lemma~\ref{lem:some_lemma} 
 \[
   Q_3^1(x,-x-1) \leq  Q_3^1(x_0,-x_0-1) < 0.
 \]
 The proposition follows from induction.
 \end{proof}

 %From now on we will assume that $(r,s)$ does not lie on the fundamental line which
 %implies that $\phi^i(r,s) = \psi^i(r,s) = 0.$
\subsection{ }
 It is useful to observe that there
 are some interesting similarities between the fundamental line expansions
  for $S^i(r,s)$, $T^i(r,s)$ and $Q_3^i(r,s)$. Notice that if we define 
 \[
  \theta^i(r_0,s_0,a) = \frac{1}{2}(\phi^i(r_0,s_0) + \psi^i(r_0,s_0)) + \frac{1}{2}p(p-1-2a)\phi^i(r_0,s_0),
\]
 then for $(r,s) = (a + pr_0, p-1-a+ps_0)$,
 \begin{equation}\label{eq:fund_lineQ_3}
   \begin{aligned}
   S^i(r,s) &= \theta^i(r_0,s_0,a) + S^i(r_0,s_0) \\
   T^i(r,s) &= \theta^i(r_0,s_0,a) + T^i(r_0,s_0) \\
   Q_3^i(r,s) &= -\theta^i(r_0,s_0,a) + pQ_3^i(r_0,s_0). \\
 \end{aligned}
 \end{equation}

By starting from the base case, which is given by  
\begin{equation}\label{eq:base_caseQ_3}
\begin{aligned}
  S^1(x,-x-1) &=\frac{1}{2}x(x+1) \\
  T^1(x,-x-1) &=\frac{1}{2}x(x+1)+1 \\
  Q_3^1(x,-x-1) &=-\frac{1}{2}x(x+1)
\end{aligned}
\end{equation}
for $1 \leq x \leq p-1$,
we can see that as $r >0$ increases,  $Q^i_3(r,s)$  grows much faster in magnitude than the $T^i(r,s)$ and $S^i(r,s)$ terms.

We even get the following lemma, which is analogous to Lemma~\ref{lem:fundline_reduction}.
This will allow us to extend our vanishing results to certain weights which do not lie on
the fundamental line. 
\begin{lem}\label{prop:general_formula}
  Suppose $(r,s)$ satisfies $(r+1)(s+1) < 0$ and $r+s \neq -1$, so that we can write  $(r,s) = (x,y) + p^k(r_0,s_0)$ as in \eqref{eqn:p-adic_expansion}, 
  where $$(x,y) = \sum_{i=0}^{k-1}p^i(a_i,b_i)$$ 
   and   $a_{k-1} + b_{k-1} \neq p-1$.
  Then if $x+y <p^k-1$,
  \begin{align*}
    Q_3^i(r,s) &= -(x+y-(p^k-1))S^i(r_0,s_0) + p^kQ^i_3(r_0,s_0). 
  \end{align*}
  and if $x+y  > p^k-1$,
  \begin{align*}
    Q_3^i(r,s) &= (x+y-(p^k-1))T^i(r_0,s_0) + p^kQ^i_3(r_0,s_0)
  \end{align*}
\end{lem}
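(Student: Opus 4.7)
The plan is to proceed by induction on $k$. The base case $k = 1$ is precisely Proposition~\ref{prop:Q_3_formula} applied to $(r,s) = (a_0, b_0) + p(r_0, s_0)$ under the assumption $a_0 + b_0 \neq p - 1$: the two stated subcases match the claim verbatim after identifying $x + y = a_0 + b_0$.

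For the inductive step, given $(r,s) = (x',y') + p^{k+1}(r_0, s_0)$ with $(x',y') = \sum_{j=0}^{k} p^j (a_j, b_j)$ and $a_k + b_k \neq p-1$, I would peel off the lowest digit and write $(r,s) = (a_0, b_0) + p(r_1, s_1)$, where $(r_1, s_1) := (x'', y'') + p^k(r_0, s_0)$ with $(x'', y'') := \sum_{j=0}^{k-1} p^j(a_{j+1}, b_{j+1})$. The top coefficient of $(x'', y'')$ is $(a_k, b_k)$, so the inductive hypothesis applies and yields the desired expression for $Q_3^i(r_1, s_1)$. Using the crude bound $0 \leq a_0 + b_0 \leq 2(p-1) < 2p$, one checks that the sign of $x' + y' - (p^{k+1} - 1)$ agrees with the sign of $x'' + y'' - (p^k - 1)$, which in turn agrees with the sign of $a_k + b_k - (p - 1)$, so the two levels of the induction fall into compatible cases.

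The main computation is to apply Proposition~\ref{prop:Q_3_formula} to the outer one-step expansion and use Lemma~\ref{lem:fundline_reduction} to rewrite $S^i(r_1, s_1)$ and $T^i(r_1, s_1)$ in terms of $S^i(r_0, s_0)$ and $T^i(r_0, s_0)$. In the subcases $a_0 + b_0 < p-1$ and $a_0 + b_0 > p-1$ the terms combine cleanly through the identity
\[
x' + y' - (p^{k+1} - 1) = \bigl(a_0 + b_0 - (p-1)\bigr) + p\bigl(x'' + y'' - (p^k - 1)\bigr),
\]
together with $p \cdot p^k = p^{k+1}$, producing the formula claimed for $Q_3^i(r,s)$.

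The main obstacle is the boundary subcase $a_0 + b_0 = p-1$, where Proposition~\ref{prop:Q_3_formula} forces the more complicated middle formula involving $\phi^i(r_1, s_1)$ and $S^i(r_1, s_1) + T^i(r_1, s_1)$. The plan here is to note that $r_1 + s_1 = x'' + y'' - p^k \neq -1$, since $x'' + y'' \neq p^k - 1$ (as $a_k + b_k \neq p-1$), so by the remark in Proposition~\ref{prop:Q_3_formula} — equivalently, by iterating Proposition~\ref{prop:phi_rec} through the $p$-adic digits of $(r_1, s_1)$, which terminates at the non-fundamental-line digit $(a_k, b_k)$ — both $\phi^i(r_1, s_1)$ and $\psi^i(r_1, s_1)$ vanish. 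Combined with Lemma~\ref{lem:fundline_reduction}, which forces $S^i(r_1, s_1) + T^i(r_1, s_1) = 0$, the middle formula collapses to $Q_3^i(r,s) = p\, Q_3^i(r_1, s_1)$, which agrees with the desired formula since $a_0 + b_0 - (p-1) = 0$ reduces the key identity to $x' + y' - (p^{k+1} - 1) = p(x'' + y'' - (p^k - 1))$.
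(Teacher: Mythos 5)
Your proof is correct and follows essentially the same route as the paper's: an induction on $k$ that peels off (equivalently, prepends) the lowest base-$p$ digit, applies Proposition~\ref{prop:Q_3_formula} to the one-step expansion, and uses Lemma~\ref{lem:fundline_reduction} to replace $S^i(r_1,s_1)$ and $T^i(r_1,s_1)$ by $S^i(r_0,s_0)$ and $T^i(r_0,s_0)$. The paper dispatches your boundary subcase $a_0+b_0=p-1$ in one stroke via the final clause of Proposition~\ref{prop:Q_3_formula} (since $r_1+s_1\neq -1$ the uniform formula holds for all $(a_0,b_0)$), but your collapse of the middle formula reaches the same conclusion.
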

\begin{proof}
  The argument is identical to the proof of Lemma~\ref{lem:fundline_reduction}, but instead
  we use the identity 
  \begin{align*}
    Q^i_3(r',s') &=-(a+b-(p-1))S^i(r,s) + pQ_3^i(r,s)\\
    &= -(a+b-(p-1))S^i(r_0,s_0) + p[-((x+y)-(p^k-1))S^i(r_0,s_0) + p^kQ^i_3(r_0,s_0)] \\
    &= -(a+b-(p-1) + px+py +p^{k+1}-p)S^i(r_0,s_0) +  p^{k+1}Q^i_3(r_0,s_0) \\
    & =-((x'+y')-(p^{k+1}-1))S^i(r_0,s_0) + p^{k+1}Q_3^i(r_0,s_0)
  \end{align*}
  which was given in Proposition~\ref{prop:Q_3_formula}.
\end{proof}

\subsection{ }
We now want to use this lemma to get additional non-vanishing results for 
$Q^i_3(r,s)$. 
\begin{cor}\label{cor:reduce_to_base}
  Let $(r,s)=(x,y) + p^k(r_0,s_0)$ with $a_{k-1}+b_{k-1} \neq p-1$ and $r_0 + s_0 = -1$ be as
  in \eqref{eqn:p-adic_expansion}. Then
  \begin{enumerate}
    \item[a.] if $x+y > p^k-1$ and $T^i(r_0,s_0) \leq -Q_3^i(r_0,s_0)$, then $Q_3^i(r,s) <0$
    \item[b.] if $x+y < p^k-1$ and $S^i(r_0,s_0) \leq -Q_3^i(r_0,s_0)$, then $Q_3^i(r,s) < 0$. 
  \end{enumerate}
\end{cor}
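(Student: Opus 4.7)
The plan is to combine the recursive expansion in Lemma~\ref{prop:general_formula} with the sign information supplied by Proposition~\ref{prop:Q_3_fundline}, using the hypothesis to tame the remaining term. The key quantitative input is that the coefficient in front of $T^i(r_0,s_0)$ (respectively $S^i(r_0,s_0)$) is at most $p^k-1$, which is just small enough to be absorbed by the factor $p^k$ multiplying the strictly negative quantity $Q_3^i(r_0,s_0)$.

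For part (a), I would set $c := x+y - (p^k - 1)$, which is a positive integer. The bound $c \leq p^k - 1$ comes from the $p$-adic expansion: each digit satisfies $a_j+b_j \leq 2(p-1)$, hence $x+y \leq 2(p-1)\sum_{j=0}^{k-1}p^j = 2(p^k-1)$. Lemma~\ref{prop:general_formula} then yields
\[
Q_3^i(r,s) = c\,T^i(r_0,s_0) + p^k\, Q_3^i(r_0,s_0),
\]
and Proposition~\ref{prop:Q_3_fundline} (applied to the nonzero fundamental-line weight $(r_0,s_0)$ with $r_0+s_0=-1$) gives $Q_3^i(r_0,s_0) < 0$. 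Now I would split on the sign of $T^i(r_0,s_0)$. If $T^i(r_0,s_0) \leq 0$, then both summands above are nonpositive with the second strictly negative, and we are done. If $T^i(r_0,s_0) > 0$, invoke the hypothesis $T^i(r_0,s_0) \leq -Q_3^i(r_0,s_0)$ together with the bound $c \leq p^k-1$ to estimate
\[
Q_3^i(r,s) \leq (p^k-1)\bigl(-Q_3^i(r_0,s_0)\bigr) + p^k Q_3^i(r_0,s_0) = Q_3^i(r_0,s_0) < 0.
\]

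Part (b) is handled identically after setting $c := (p^k-1) - (x+y) \geq 1$, where now trivially $c \leq p^k-1$ since $x+y \geq 0$. Lemma~\ref{prop:general_formula} gives $Q_3^i(r,s) = c\,S^i(r_0,s_0) + p^k Q_3^i(r_0,s_0)$, and the same case split on the sign of $S^i(r_0,s_0)$, combined with the hypothesis $S^i(r_0,s_0) \leq -Q_3^i(r_0,s_0)$, produces the same strict inequality $Q_3^i(r,s) \leq Q_3^i(r_0,s_0) < 0$. There is no genuine obstacle here; the corollary is a clean bookkeeping consequence of the two earlier results. The only point requiring mild care is the digit-by-digit coefficient bound $c \leq p^k-1$, which is precisely what allows the $p^k$ attached to $Q_3^i(r_0,s_0)$ to dominate in the worst case and force strict negativity.
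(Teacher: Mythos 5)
Your proof is correct and follows essentially the same route as the paper: apply Lemma~\ref{prop:general_formula} to write $Q_3^i(r,s)$ as a multiple of $T^i(r_0,s_0)$ (resp.\ $S^i(r_0,s_0)$) plus $p^kQ_3^i(r_0,s_0)$, bound the coefficient by $p^k-1$ using the digit estimate, and let the hypothesis absorb the positive contribution. Your version is in fact slightly more careful than the paper's, since you make the case split on the sign of $T^i(r_0,s_0)$ and the appeal to Proposition~\ref{prop:Q_3_fundline} for $Q_3^i(r_0,s_0)<0$ explicit.
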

\begin{proof}
  First suppose $a_{k-1}+b_{k-1} > p-1$. By Lemma~\ref{prop:general_formula}, we get
  \[
    Q^i_3(r,s) = ((x+y)-(p^k-1))T^i(r_0,s_0)  + p^kQ_3^i(r_0,s_0).
  \]
  Therefore, $Q^i_3(r,s) <0$ if and only if 
  \[
    ((x+y)-(p^k-1))T^i(r_0,s_0) < -p^kQ_3^{i}(r_0,s_0).
  \]
  Since $(x,y) \in X_k(T)$ and  $x + y > p^k-1$, 
  we know that $0<x+y -(p^k-1) \leq (p^k-1)$ which allows us to
  conclude that 
  \[
    ((x+y)-(p^k-1))T^i(r_0,s_0) \leq (p^k-1)S^i(r_0,s_0)<p^kT^i(r_0,s_0).
  \]
  Thus if $T^i(r_0,s_0) \leq Q_3^i(r_0,s_0)$, then $Q_3^i(r,s) < 0$. 

  Now consider the case where $a_{k-1}+b_{k-1} < p-1$,
  \[
    Q^i_3(r,s) = -((x+y)-(p^k-1))S^i(r_0,s_0)  + p^kQ_3^i(r_0,s_0).
  \]
  This means $Q^i_3(r,s) <0$ if and only if 
  \[
    -((x+y)-(p^k-1))S^i(r_0,s_0) < -p^kQ_3^{i}(r_0,s_0).
  \]
  Again since $(x,y) \in X_k(T)$ and $x + y < p^k-1$, 
  we get $-(p^k-1) < x+y -(p^k-1) < 0$ and hence
  \[
    -((x+y)-(p^k-1))S^i(r_0,s_0) \leq (p^k-1)S^i(r_0,s_0)<p^kS^i(r_0,s_0).
  \]
  Therefore, if $S^i(r_0,s_0) \leq Q_3^i(r_0,s_0)$ then $Q_3^i(r,s) < 0$. 
\end{proof}

%for $r +s = -1$.
The next lemma gives us more conditions for when 
$T^i(r,s)\leq -Q_3^i(r,s)$ and 
$S^i(r,s) \leq -Q_3^i(r,s)$.
\begin{lem}\label{lem:reduce_to_base}
If $(r,s)$ satisfies $r +s = -1$ and $r >p-1$, then 
$S^i(r,s) \leq -Q_3^i(r,s)$ and $T^i(r,s) \leq -Q_3^i(r,s)$. 
\end{lem}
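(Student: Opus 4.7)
The plan is to reduce the claim to a simpler inequality on the ``preceding'' fundamental line weight and then induct. Since $r > p-1$ and $r+s=-1$, write $r = a + pr_0$ with $0 \le a \le p-1$ and $r_0 \ge 1$; then the constraint $r+s=-1$ forces $s = (p-1-a) + ps_0$ with $s_0 = -r_0-1$, so $(r_0,s_0)$ again lies on the fundamental line. Apply the fundamental-line expansions \eqref{eq:fund_lineQ_3} at $(a, p-1-a)$: subtracting the $Q_3^i$-expansion from the $S^i$- and $T^i$-expansions makes the common term $\theta^i(r_0,s_0,a)$ cancel, yielding
\begin{align*}
S^i(r,s) + Q_3^i(r,s) &= S^i(r_0,s_0) + p\,Q_3^i(r_0,s_0),\\
T^i(r,s) + Q_3^i(r,s) &= T^i(r_0,s_0) + p\,Q_3^i(r_0,s_0).
\end{align*}
Thus it suffices to prove the two inequalities
\[
S^i(r_0,s_0) \le -p\,Q_3^i(r_0,s_0), \qquad T^i(r_0,s_0) \le -p\,Q_3^i(r_0,s_0).
\]

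I would then prove these two inequalities by induction on $r_0 \ge 1$. For the base case $1 \le r_0 \le p-1$, the explicit formulas \eqref{eq:base_caseQ_3} give $S^1(r_0,s_0) = \tfrac12 r_0(r_0+1)$, $T^1(r_0,s_0) = \tfrac12 r_0(r_0+1) + 1$, and $Q_3^1(r_0,s_0) = -\tfrac12 r_0(r_0+1)$. Substituting produces
\[
S^1 + pQ_3^1 = \tfrac{1-p}{2}r_0(r_0+1) \le 0, \qquad T^1 + pQ_3^1 = \tfrac{1-p}{2}r_0(r_0+1) + 1,
\]
and since $r_0(r_0+1) \ge 2$ and $p \ge 2$, the second quantity is also $\le 0$. (The $i=2$ case is handled identically via the Serre-duality symmetry $S^2(r_0,s_0) = T^1(r_0,s_0)$ used earlier.)

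For the inductive step $r_0 \ge p$, note that $Q_3^i(r_0,s_0) < 0$ by Proposition~\ref{prop:Q_3_fundline}, so $-p\,Q_3^i(r_0,s_0) \ge -Q_3^i(r_0,s_0) > 0$. The inductive hypothesis (applied to the smaller fundamental-line weight $(r_0,s_0)$, which still satisfies $r_0 > p-1$) gives $S^i(r_0,s_0) \le -Q_3^i(r_0,s_0)$ and $T^i(r_0,s_0) \le -Q_3^i(r_0,s_0)$, and stringing the inequalities together yields $S^i(r_0,s_0),\,T^i(r_0,s_0) \le -p\,Q_3^i(r_0,s_0)$, as required.

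The only potential subtlety is bookkeeping: one must make sure the reduction $(r,s) \mapsto (r_0,s_0)$ stays within the hypothesis of the lemma, i.e.\ that when $r_0 > p-1$ we really can reapply the statement. This is automatic because $r_0+s_0=-1$ and $r_0 \ge p$ match the hypotheses exactly. Thus the induction is clean and the verification reduces to the elementary arithmetic check in the base case.
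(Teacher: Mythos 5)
Your proof is correct and takes essentially the same route as the paper's: both arguments induct along the $p$-adic expansion of the fundamental-line weight, using the cancellation of the common term $\theta^i(r_0,s_0,a)$ in the expansions \eqref{eq:fund_lineQ_3}, the explicit base values \eqref{eq:base_caseQ_3} for $1\le r_0\le p-1$, and the negativity of $Q_3^i$ on the fundamental line, with the $i=2$ case handled by the same symmetry. Your reformulation of the inductive step as $S^i(r_0,s_0)+p\,Q_3^i(r_0,s_0)\le 0$ is just a cleaner bookkeeping of the same computation the paper performs by comparing $-Q_3^i$ directly with $S^i$ and $T^i$ at each level.
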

\begin{proof}
Assume first that $i=1$.  We consider the case where 
\[
(r_0,s_0) = (a_0 + pz, p-1-a_0 + p(-z-1)).
\] 
From (\ref{eq:fund_lineQ_3}) and (\ref{eq:base_caseQ_3}), we get that
\begin{align*}
-Q_3^1(r_0,s_0) &= \theta^1(z,-z-1,a_0) + \frac{p}{2}z(z+1)\\
                      &> \theta^1(z,-z-1,a_0) + \frac{1}{2}z(z+1)+1\\
                      &= T^1(r_0,s_0)
\end{align*}
and 
\begin{align*}
-Q_3^1(r_0,s_0) &= \theta^1(z,-z-1,a_0) + \frac{p}{2}z(z+1)\\
                      &> \theta^1(z,-z-1,a_0) + \frac{1}{2}z(z+1)\\
                      &= S^1(r_0,s_0).
\end{align*}
If we now consider weights of the form 
\[
(r_1,s_1) = (a_1 + pr_0, p-1-a_1 + ps_0)
\] 
where $(r_0,s_0)$ is as above. Then again by using (\ref{eq:fund_lineQ_3}) we can show that
\begin{align*}
-Q_3^1(r_1,s_1) &> T^1(r_1,s_1) \\
-Q_3^1(r_1,s_1) &> S^1(r_1,s_1).
\end{align*}
Continuing in this way by inductively writing 
\[
(r_n,s_n) = (a_{n-1} + pr_{n-1}, p-1-a_{n-1} + ps_{n-1}))
\]
we can prove that $S^i(r,s) \leq -Q_3^1(r,s)$ and $T^i(r,s) \leq -Q_3^1(r,s)$ whenever $r+s=-1$.

Finally, by observing that if $r+s=-1$, then $Q^1(r,s) = Q^2(r,s)$, $S^1(r,s) =T^2(r,s)$ and $T^2(r,s) = S^1(r,s)$, 
we immediately get that the inequalities hold for $i=2$ as well. 
\end{proof}

We get an additional vanishing result  for $Q_3^i(r,s)$. 
\begin{lem}\label{prop:non-base}
If  $(r,s) = (x,y)+p^k(r_0,s_0)$ is such that $a_{k-1} + b_{k-1} \neq p-1$,  $r_0+s_0 = -1$ and $r_0 \geq p$, then 
$Q_3^i(r,s) < 0$. 
\end{lem}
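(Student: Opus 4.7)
The plan is to deduce the lemma directly from the preceding two results, namely Corollary~\ref{cor:reduce_to_base} and Lemma~\ref{lem:reduce_to_base}, since together they are tailored precisely for this situation. First I would record that the hypotheses $r_0+s_0=-1$ and $r_0\geq p$ are exactly the input to Lemma~\ref{lem:reduce_to_base}, which immediately yields the two inequalities
\[
S^i(r_0,s_0)\;\leq\;-Q_3^i(r_0,s_0)\qquad\text{and}\qquad T^i(r_0,s_0)\;\leq\;-Q_3^i(r_0,s_0).
\]
These are precisely the hypotheses appearing in parts (b) and (a) of Corollary~\ref{cor:reduce_to_base}, respectively.

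Next I would argue that the assumption $a_{k-1}+b_{k-1}\neq p-1$ rules out the borderline case $x+y=p^k-1$, so that one of the two alternatives in Corollary~\ref{cor:reduce_to_base} must apply. Indeed, writing
\[
x+y \;=\; p^{k-1}(a_{k-1}+b_{k-1}) + \sum_{i=0}^{k-2}p^i(a_i+b_i),
\]
and using $0\leq a_i+b_i\leq 2p-2$ for each $i$, a direct check shows that the only way to realize $x+y=p^k-1$ is with $a_{k-1}+b_{k-1}=p-1$, which is excluded. Hence either $x+y>p^k-1$ or $x+y<p^k-1$.

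In the former case I would apply Corollary~\ref{cor:reduce_to_base}(a), whose hypothesis $T^i(r_0,s_0)\leq -Q_3^i(r_0,s_0)$ was just established, to conclude $Q_3^i(r,s)<0$. In the latter case I would apply Corollary~\ref{cor:reduce_to_base}(b) with the corresponding $S^i$-inequality, again obtaining $Q_3^i(r,s)<0$. In both cases the desired conclusion follows.

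There is no real obstacle here: the heavy lifting was done in Lemma~\ref{lem:reduce_to_base} (which exploited the fundamental-line recursion \eqref{eq:fund_lineQ_3} together with the base case \eqref{eq:base_caseQ_3} to trade the factor $1/2$ in $S^i$ and $T^i$ for the factor $p/2$ in $-Q_3^i$) and in Corollary~\ref{cor:reduce_to_base} (which came from the general recursion in Lemma~\ref{prop:general_formula}). The only point requiring any care is the brief combinatorial check that excludes $x+y=p^k-1$, after which the result is a two-line consequence of those two statements.
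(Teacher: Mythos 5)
Your proposal is correct and follows essentially the same route as the paper, which simply cites Corollary~\ref{cor:reduce_to_base} (with the hypotheses supplied by Lemma~\ref{lem:reduce_to_base}, exactly as you do). Your explicit check that $a_{k-1}+b_{k-1}\neq p-1$ excludes the borderline case $x+y=p^k-1$ is a detail the paper leaves implicit, and it is verified correctly.
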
 
\begin{proof}
  Follows immediately from Corollary~\ref{cor:reduce_to_base}.
\end{proof}

So we now know that if $Q_3^i(r,s) \geq 0$, then 
\[
(r,s) = (x,y) + p^k(z,-z-1)
\]
where $a_{k-1}+b_{k-1} \neq p-1$ and $1 \leq z \leq p-1$. 

Up until this point, we have always allowed the cohomology degree $i$ to be either $1$ or $2$. However, for the rest of this section
 we will fix $i=1$. This is justifiable, since by Serre duality if we can determine the support varieties for all
modules of the form $H^1(r,s)$ with $(r+1)(s+1) < 0$, we will have also determined the support varieties for all modules of the
form $H^2(r,s)$ with $(r+1)(s+1) <0$. 
We will also assume that our weights are of  the form 
$$(r,s) = (x,y) + p^k(z,-z-1)$$ where $1 \leq z \leq p-1$ and $a_{k-1}+b_{k-1} \neq p-1$.
With this in mind, we can state the next vanishing result. 
\begin{lem}\label{prop:neg-case}
If $(r,s)= (x,y) + p^k(r_0,s_0)$ where $a_{k-1}+b_{k-1} \neq  p-1$ and $r_0 + s_0=-1$,  then $Q^1_3(r,s)<0$ provided  $x +y < p^k -1$. 
\end{lem}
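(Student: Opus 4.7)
The plan is to apply Lemma~\ref{prop:general_formula} directly, using the explicit base-case values of $S^1$ and $Q_3^1$ on the fundamental line. By the cumulative restrictions established just before the statement of this lemma, I may assume $(r_0, s_0) = (z, -z-1)$ with $1 \leq z \leq p-1$: the case $r_0 \geq p$ is ruled out by Lemma~\ref{prop:non-base}, and $r_0 = 0$ has been excluded earlier. So the base weight sits in the range where the identities (\ref{eq:base_caseQ_3}) apply verbatim.

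The hypothesis $x + y < p^k - 1$ places me in the first branch of Lemma~\ref{prop:general_formula}, which expresses $Q_3^1(r,s)$ as a linear combination of $S^1(r_0, s_0)$ and $Q_3^1(r_0, s_0)$:
\[
Q_3^1(r, s) = -(x + y - (p^k - 1))\, S^1(r_0, s_0) + p^k\, Q_3^1(r_0, s_0).
\]
The crucial observation is that for $1 \leq z \leq p-1$, equation (\ref{eq:base_caseQ_3}) gives $S^1(z, -z-1) = \tfrac{1}{2} z(z+1)$ and $Q_3^1(z, -z-1) = -\tfrac{1}{2} z(z+1)$, so the two base quantities are negatives of each other. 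I expect this to cause a clean factorization: the common factor $\tfrac{1}{2} z(z+1)$ pulls out with a sign, leaving the bracket $(x + y - (p^k - 1)) + p^k = x + y + 1$, and the answer collapses to $Q_3^1(r, s) = -\tfrac{1}{2} z(z+1)(x + y + 1)$.

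To finish, I would observe that both factors in this product are strictly positive: $z(z+1) \geq 2$ because $z \geq 1$, and $x + y + 1 \geq 1$ because $(x, y) \in X_k(T)$ forces $x, y \geq 0$. The strict negativity of $Q_3^1(r, s)$ then follows immediately.

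I do not anticipate a serious obstacle; the heavy lifting was done in Lemma~\ref{prop:general_formula} and in the verification of (\ref{eq:base_caseQ_3}). The only subtle point is to confirm that the sign conventions of the expansion formula match those of the base-case equations so that the cancellation actually produces $x + y + 1$ inside the bracket—but this is guaranteed by the relation $S^1(r_0, s_0) = -Q_3^1(r_0, s_0)$ on the specific fundamental-line weights $(z,-z-1)$ with $1 \leq z \leq p-1$, which is exactly the range we have been reduced to.
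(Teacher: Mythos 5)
Your proposal is correct and follows essentially the same route as the paper: reduce to $(r_0,s_0)=(z,-z-1)$ with $1\leq z\leq p-1$ via the preceding lemmas, apply the first branch of Lemma~\ref{prop:general_formula}, and use $S^1(z,-z-1)=-Q_3^1(z,-z-1)=\tfrac{1}{2}z(z+1)$ to collapse the expression to $-\tfrac{1}{2}z(z+1)(x+y+1)<0$. This is exactly the computation in the paper's proof.
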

\begin{proof}
By  Lemma~\ref{lem:reduce_to_base}, we can assume that $(r_0,s_0) = (z,-z-1)$ where 
$1 \leq z \leq p-1$. Therefore,  by Lemma~\ref{prop:general_formula} 
\begin{align*}
Q_3^1(r,s) &= -(x+y-(p^k-1))S^1(z,-z-1) + p^kQ_3^1(z,-z-1) \\
                    &=  (-x-y +p^k-1 -p^k)\frac{1}{2}z(z+1)\\
                    &= (-x-y-1)\frac{1}{2}z(z+1) < 0. 
\end{align*}
\end{proof}

This proposition combines all of our vanishing results so far, to
precisely determine the weights $(r,s)$ such that $Q_3^1(r,s) =0$. 
\begin{prop}\label{prop:Q_3_vanish}
 We get $Q_3^1(r,s) = 0$ if and only if $(r,s) = (x,y) + p^k(z,-z-1)$ where $k\geq 1$, 
$x+y = 2p^k -p^{k-1}-1$,  $1 \leq z \leq p-1$ and 
$2p-2 = z(z+1)$.
If no such $z$ exists, then $Q^1_3(r,s) \neq 0$ for all weights $(r,s)$ with $(r+1)(s+1) <0$. 
\end{prop}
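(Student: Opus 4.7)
My plan is to combine all the previous vanishing results so that only one specific family of weights remains, and then to extract the claimed numerical condition by direct computation from Lemma~\ref{prop:general_formula}. First I would use Proposition~\ref{prop:Q_3_fundline} and Lemma~\ref{prop:non-base} to reduce to weights of the form $(r,s) = (x,y) + p^k(z,-z-1)$ with $k \geq 1$, $1 \leq z \leq p-1$, and canonical digits satisfying $a_{k-1}+b_{k-1}\neq p-1$. Lemma~\ref{prop:neg-case} then kills the subcase $x+y < p^k - 1$, while $x+y = p^k - 1$ is already excluded by the digit condition, leaving only $x+y > p^k - 1$ to analyze.

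In this surviving subcase, Lemma~\ref{prop:general_formula} together with the base-case values $T^1(z,-z-1) = \tfrac{1}{2}z(z+1)+1$ and $Q_3^1(z,-z-1) = -\tfrac{1}{2}z(z+1)$ from~\eqref{eq:base_caseQ_3} turns the equation $Q_3^1(r,s)=0$ into the Diophantine relation
\[
(x+y-p^k+1)\bigl(z(z+1)+2\bigr) = p^k\, z(z+1).
\]
Setting $A := x+y-p^k+1$ and $B' := \tfrac{1}{2}z(z+1)+1$ (an integer since $z(z+1)$ is always even), this rearranges to $B'(p^k - A) = p^k$, so $B'$ must divide $p^k$. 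Since $B'\geq 2$, it follows that $B' = p^j$ for some $1\leq j\leq k$; but the bound $z\leq p-1$ gives $B' \leq \tfrac{1}{2}(p^2-p+2) < p^2$, so $j=1$ is the only possibility. This forces $z(z+1) = 2p-2$ and, by back-substitution, $x+y = 2p^k - p^{k-1} - 1$, which lies in the required range $p^k - 1 < x+y \leq 2(p^k - 1)$.

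The main subtlety will be the bookkeeping: one must verify that the reduction chain is genuinely exhaustive, so that the canonical form $(r,s) = (x,y) + p^k(z,-z-1)$ with the digit condition really captures every weight with $(r+1)(s+1)<0$ and $r+s\neq -1$ not already handled by the earlier lemmas (the case $r+s = -1$ is covered directly by Proposition~\ref{prop:Q_3_fundline}). Since every step in the derivation above is reversible, the converse direction is then automatic: when $z(z+1)=2p-2$ has a solution $z\in\{1,\ldots,p-1\}$, any $(r,s)$ of the claimed shape satisfies $Q_3^1(r,s)=0$, and when no such $z$ exists the equation has no integer solutions, so $Q_3^1(r,s)\neq 0$ throughout the region $(r+1)(s+1)<0$.
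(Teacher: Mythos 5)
Your proposal is correct and follows essentially the same route as the paper: the same reduction chain via Proposition~\ref{prop:Q_3_fundline}, Lemma~\ref{prop:non-base}, and Lemma~\ref{prop:neg-case}, followed by the same divisibility analysis of $Q_3^1(r,s)=0$ using Lemma~\ref{prop:general_formula} and the base values in~\eqref{eq:base_caseQ_3}. Your rearrangement $B'(p^k-A)=p^k$ is a slightly cleaner way to extract the condition $\tfrac{1}{2}(z^2+z+2)=p$ than the paper's explicit fraction manipulation, but the substance is identical.
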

\begin{proof}
  By Lemmas~\ref{prop:non-base} and \ref{prop:neg-case}, we can assume that $x + y >p^k-1$ 
  and that $(r_0,s_0)=(z,-z-1)$ for some $1 \leq z \leq p-1$. Hence, by Lemma~\ref{prop:general_formula},
  \begin{align*}
    Q_3^1(r,s) &= (x+y -(p^k-1))T^1(z,-z-1) + p^kQ_3^1(z,-z-1). 
  \end{align*}
  If we assume $Q_3^1(r,s) = 0$ and set $h = \frac{1}{2}z(z+1)$ 
  so that $Q_3^1(z,-z-1) = -h$ and $T^1(z,-z-1) = h+1$, then 
  \begin{align*}
    x+y &= \frac{(2p^k-1)h}{h+1} + \frac{p^k-1}{h+1}\\
        &= \frac{(2p^k-1)(z^2 + z)}{z^2+z+2} + \frac{2p^k-2}{z^2+z+2} \\
        &= \frac{(z^z+z+1)2p^k - (z^2+z) -2}{z^2+z+2} \\
        &= \frac{2p^k(z^2+z+1)}{z^2+z+2} -1.
   \end{align*}
   We can establish that $z^2+z+2 \mid 2p^k(z^2+z+1)$, but since $z^2+z+1$ and $z^2+z+2$ are 
   coprime, we get $z^2+z+2 \mid 2p^k$ which implies
   $ \frac{1}{2}(z^2 + z + 2) \mid p.$
   In addition to $\frac{1}{2}(z^2+z+2) > 1$, since $p$ is prime we can prove that 
   \[ \frac{1}{2}(z^2+z+2) = p. \]
   Therefore, by rearranging the terms, this is equivalent to saying that 
   \[
     z(z+1) = 2p-2.
   \]
   In particular this yields
   \begin{align*}
     z^2+z+2 &= 2p\\
     z^2+z+1 &= 2p-1.
   \end{align*}
  If we plug it into the above expression for $x+y$, we get
  \[
    x+y = 2p^k-p^{k-1}-1.
  \]
\end{proof}

To summarize, we have shown that if  $H^1(a + pr, p-2-a + ps)|_{G_1}\neq 0$,
then it is not projective, except possibly when 
\[
  (r,s) = (x,y) + p^k(z,-z-1)
\]
where $2p-2 =z(z+1)$ and $(x,y) \in X_k(T)$ with $x+y = 2p^k-p^{k-1}-1$. 
In this case, it will be enough to show that $\delta^1(r,s-1) - \delta^1(r-1,s) \neq 0$ 
to get that $H^1(a+pr,p-2-a + ps)|_{G_1}$ is not projective. In this instance, we will need
to show that $R_3^1(r,s) \neq 0$.

\subsection{Calculations for $R_1^3(r,s)$}
The goal now is to show that for any weight of the form
\[
  (r,s) = (x,y) + p^k(z,-z-1)
\]
where $(x,y) \in X_k(T)$ with $x+y = 2p^k-p^{k-1}-1$ and $z(z+1)=2p-2$, 
\[
 R^1_3(r,s)=  \delta^1(r,s-1) - \delta^1(r-1,s) \neq 0.
\]

It would be helpful to characterize the weights of the form $(x,y) \in X_k(T)$ with $$x+y = 2p^k-p^{k-1}-1$$
for each $k \geq 1$. When $k=1$, we see that the only such weight is the Steinberg weight 
\[
  (x,y) = (p-1,p-1).
\]
For $k\geq 2$, we first observe that 
\[
  2p^k -p^{k-1}-1 = \left( \sum_{i=0}^{k-2}p^i(p-1) \right) + 2p^{k-1}(p-1).
\]
It can be verified that all such $(x,y)\in X_k(T)$
are of the form 
\[
  (x,y) = \left( \sum_{i=0}^{k-2}p^i(a_i,p-1-a_i) \right) + p^{k-1}(p-1,p-1)
\]
where $0\leq a_i \leq p-1$. We can see that there are $p^{k-1}$ possible choices for $(x,y)$. 

To prove this, we will use the recursive character formulas to  perform induction on $k$.
The following lemma will handle the case when $k=1$. 
\begin{lem}\label{lem:k=1_case}
  If $(r,s) =  (p-1,p-1) + p(z,-z-1)$ for $1 \leq z \leq p-1$, then 
  \[
    \delta^1(r,s-1)-\delta^1(r-1,s) > 0. 
  \] 
\end{lem}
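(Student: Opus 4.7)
The plan is to apply the recursive character formulas from Proposition~\ref{prop:sreg_formulas} to both $\delta^1(r, s-1)$ and $\delta^1(r-1, s)$, then use Kempf vanishing on singular weights together with a Leray spectral sequence calculation to evaluate the resulting difference as a manifestly positive integer.

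First, I would rewrite the two weights in a form amenable to Proposition~\ref{prop:sreg_formulas}: $(r, s-1) = (p-1 + pz,\, (p-2) + p(-z-1))$ fits the first formula with $a = p-2$, $r' = z$, $s' = -z-1$, while $(r-1, s) = ((p-2) + pz,\, (p-1) + p(-z-1))$ fits the second formula with $a = 0$. Passing to ordinary dimensions and subtracting, the terms involving $\delta^1(z, -z-1)$ cancel (since $\delta(p-1, p-2) = \delta(p-2, p-1)$ by the $\tau$-symmetry from Section~\ref{sec:transposition}), leaving
\begin{align*}
\delta^1(r, s-1) - \delta^1(r-1, s) &= \delta(p-1, 0)\bigl[\delta^1(z, -z-2) - \delta^1(z-1, -z-1)\bigr] \\
&\quad + \delta(p-2, 0)\bigl[\delta^1_\alpha(z+1, -z-2) - \delta^1_\beta(z-1, -z)\bigr],
\end{align*}
after applying $\tau$-symmetry again to identify $\delta(0, p-1) = \delta(p-1, 0)$ and $\delta(p-2, 0) = \delta(0, p-2)$.

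Next, I would note that $(z, -z-2) + \rho = (z+1, -z-1)$ and $(z-1, -z-1) + \rho = (z, -z)$ both pair to zero with $(\alpha+\beta)^\vee$, so each lies on a Weyl chamber wall. By Kempf's vanishing theorem for singular weights (cf.~\cite[Proposition II.5.2(a)]{jan2003}), $H^i(z, -z-2) = H^i(z-1, -z-1) = 0$ for all $i$, so the first bracket disappears. For the second bracket, I would use the short exact sequences of $B$-modules $0 \to -\alpha \to N(\alpha) \to k \to 0$ and $0 \to -\beta \to N(\beta) \to k \to 0$, tensored with $(z+1, -z-2)$ and $(z-1, -z)$ respectively. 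The long exact sequences in sheaf cohomology, together with the wall vanishing just established, give isomorphisms
\[
  H^1(N(\alpha) \otimes (z+1, -z-2)) \cong H^1(z+1, -z-2), \qquad H^1(N(\beta) \otimes (z-1, -z)) \cong H^1(z-1, -z).
\]

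Finally, I would evaluate these via the Leray spectral sequence for $G/B \to G/P_\beta$: since $\langle (z+1, -z-2), \beta^\vee \rangle = -z-2 \leq -2$ and (for $z \geq 2$) $\langle (z-1, -z), \beta^\vee \rangle = -z \leq -2$, only $R^1\pi_*$ survives on each $\mathbb{P}^1$-fiber, yielding $H^1(z+1, -z-2) \cong H^0(0, z)$ and $H^1(z-1, -z) \cong H^0(0, z-2)$; the boundary case $z = 1$ has $(0, -1) + \rho = (1, 0)$ on the wall of $\beta^\vee$, hence $H^1(0, -1) = 0$, consistent with $\dim H^0(0, -1) = 0$. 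Weyl's dimension formula then gives
\[
  \tfrac{(z+1)(z+2)}{2} - \tfrac{z(z-1)}{2} = 2z + 1,
\]
so the full difference equals $\delta(p-2, 0)(2z+1) > 0$. The main technical point is the positive-characteristic Borel--Weil--Bott-type identification in the last step, handled cleanly by the characteristic-free Leray spectral sequence for the $\mathbb{P}^1$-fibration $G/B \to G/P_\beta$.
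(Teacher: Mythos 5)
Your argument is correct, but it takes a genuinely different route from the paper. The paper's proof is a two-line direct computation: it quotes Andersen's theorem to conclude $\delta^2(r,s-1)=\delta^2(r-1,s)=0$, so that (with $H^0$ and $H^3$ vanishing trivially) each $\delta^1$ equals minus the Euler characteristic, which Weyl's character formula evaluates as $\delta^1(r,s-1)=\tfrac12(pz+p)(pz+1)(p-1)$ and $\delta^1(r-1,s)=\tfrac12(pz+p-1)(pz)(p-1)$, giving the difference $\tfrac{p(p-1)}{2}(2z+1)>0$. You instead feed both weights through the recursion of Proposition~\ref{prop:sreg_formulas}, kill the $\delta^1(z,-z-2)$ and $\delta^1(z-1,-z-1)$ terms and the non-split parts of the $N(\alpha)$-, $N(\beta)$-bundles by wall vanishing, and evaluate the surviving $H^1(z+1,-z-2)$ and $H^1(z-1,-z)$ via the degenerate Leray sequence for $G/B\to G/P_\beta$; I checked the bookkeeping (the decompositions $(r,s-1)=(p-1+pz,(p-2)+p(-z-1))$ and $(r-1,s)=((p-2)+pz,(p-1)+p(-z-1))$, the cancellation $\delta(p-1,p-2)=\delta(p-2,p-1)$, and the $z=1$ boundary case) and it all holds, yielding $\delta(p-2,0)(2z+1)$, which agrees with the paper's answer since $L(p-2,0)=H^0(p-2,0)$ has dimension $\tfrac{p(p-1)}{2}$. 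What your route buys is independence from Andersen's nontrivial $H^2$-vanishing for the large weights --- everything you use is either the recursion already established in Section~\ref{sec:dim_formulas} or elementary ($\mathbb{P}^1$-fibration, singular-weight vanishing); what it costs is length and the need to track the rank-two bundle terms. The paper's approach is the more economical one here precisely because for these particular weights the cohomology is concentrated in a single degree, so the character-level Euler characteristic already determines the dimension.
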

\begin{proof}
  From \cite[Theorem 3.6]{and1979}, we know that 
  \[
    \delta^2(r,s-1) = \delta^2(r-1,s) = 0.
  \]
 Therefore, 
 \begin{align*}
   \delta^1(r,s-1) &= \frac{1}{2}((pz +p)(pz+1)(p-1) )\\
   \delta^1(r-1,s) &= \frac{1}{2}( (pz + p -1)(pz)(p-1) ),
 \end{align*}
and the difference is given by 
\[
  \delta^1(r,s-1)-\delta^1(r-1,s) = \frac{p(p-1)}{2}(2z+1)>0.
\]
\end{proof}
This key lemma will allow us to handle the case where $k \geq 2$. 
\begin{lem}\label{lem:inductive_step}
Let $(r,s) = (a,p-1-a)+p(r_0,s_0)$ where $0\leq a \leq p-1$ and $Q_3^1(r_0,s_0) =0$, so that
\[
  \delta^1(r_0,s_0) + \delta^1(r_0-1,s_0-1) = \delta^1(r_0,s_0-1) + \delta^1(r_0-1,s_0),
\]
then
\[
  \delta^1(r,s-1)-\delta^1(r-1,s) =p^2(\delta^1(r_0,s_0-1) -\delta^1(r_0-1,s_0)). 
\]
In particular, if $\delta^1(r_0,s_0-1) - \delta^1(r_0-1,s_0) >0$, then 
$\delta^1(r,s-1)-\delta^1(r-1,s) > 0$. 
\end{lem}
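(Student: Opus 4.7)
The plan is to use the subregular character identities of Proposition~\ref{prop:sreg_formulas} to expand both $\delta^1(r,s-1)$ and $\delta^1(r-1,s)$ as integer combinations of
\[
A := \delta^1(r_0,s_0), \quad B := \delta^1(r_0-1,s_0-1), \quad C := \delta^1(r_0,s_0-1), \quad D := \delta^1(r_0-1,s_0),
\]
whose coefficients are dimensions of induced modules $H^0(\mu)$ for certain $p$-restricted weights $\mu$, and then exploit the hypothesis $Q^1_3(r_0,s_0)=0$, equivalently $A+B=C+D$, to eliminate $A$ and $B$ from the difference. Throughout, write $\delta(\mu) := \dim H^0(\mu)$ for such $\mu$.

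The argument would split into three cases in $a$. For the generic range $1\leq a\leq p-2$, both weights $(r,s-1)=(a+pr_0,\,p-2-a+ps_0)$ and $(r-1,s)=((a-1)+pr_0,\,p-2-(a-1)+ps_0)$ are covered by the third identity of Proposition~\ref{prop:sreg_formulas}, so each of $\delta^1(r,s-1)$ and $\delta^1(r-1,s)$ expands as a four-term sum in $A,B,C,D$ whose coefficients are $\delta(a,p-2-a)$, $\delta(p-1,a)$, $\delta(p-2-a,p-1)$ and their analogues with $a$ replaced by $a-1$. In the boundary case $a=0$, the weight $(r-1,s)=(p-1+p(r_0-1),\,p-1+ps_0)$ lies in the Steinberg block, contributing $p^3D$ via the Andersen--Haboush identity; symmetrically for $a=p-1$, the weight $(r,s-1)$ lies in the Steinberg block, contributing $p^3C$. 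In each of the three cases, substituting $A+B=C+D$ into the difference $\delta^1(r,s-1)-\delta^1(r-1,s)$ reduces the lemma to showing that the resulting coefficient of $C$ is $p^2$ and that of $D$ is $-p^2$.

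In the generic case these identities read
\begin{align*}
\bigl[\delta(a,p-2-a)-\delta(a-1,p-1-a)\bigr]+\bigl[\delta(p-1,a)-\delta(p-1,a-1)\bigr] &= p^2,\\
\bigl[\delta(a,p-2-a)-\delta(a-1,p-1-a)\bigr]+\bigl[\delta(p-2-a,p-1)-\delta(p-1-a,p-1)\bigr] &= -p^2,
\end{align*}
which follow by direct arithmetic from Weyl's dimension formula $\delta(m,n)=\tfrac{1}{2}(m+1)(n+1)(m+n+2)$: the three bracketed differences simplify respectively to $\tfrac{p(p-2a-1)}{2}$, $\tfrac{p(2a+p+1)}{2}$, and $\tfrac{p(2a+1-3p)}{2}$, and the two indicated sums telescope to $\pm p^2$. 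The boundary cases $a=0$ and $a=p-1$ reduce analogously to $\delta(0,p-2)+\delta(p-1,0)=\delta(p-2,0)+\delta(0,p-1)=p^2$, where the Steinberg contribution $p^3$ combines with $\delta(p-2,p-1)=\tfrac{p(p-1)(2p-1)}{2}$ to produce the required $\mp p^2$ on the opposite side.

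The last clause of the lemma is then immediate, since multiplication by $p^2>0$ preserves the strict positivity of $\delta^1(r_0,s_0-1)-\delta^1(r_0-1,s_0)$. The principal obstacle is purely computational: one must keep straight which of the two expansions picks up a Steinberg correction and carefully match the signs when folding $A+B$ into $C+D$. No deeper structural input is required beyond Proposition~\ref{prop:sreg_formulas}, the Andersen--Haboush identity, and Weyl's dimension formula.
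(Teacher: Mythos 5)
Your proposal is correct and follows essentially the same route as the paper's own proof: the same three-case split on $a$ (generic, $a=0$, $a=p-1$), the same expansion via the third identity of Proposition~\ref{prop:sreg_formulas} plus the Andersen--Haboush contribution $p^3$ in the boundary cases, and the same elimination of $\delta^1(r_0,s_0)+\delta^1(r_0-1,s_0-1)$ using $Q_3^1(r_0,s_0)=0$, with the coefficient differences $\tfrac{p(p-2a-1)}{2}$, $\tfrac{p(2a+p+1)}{2}$, $\tfrac{p(2a+1-3p)}{2}$ matching the paper exactly.
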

\begin{proof}
  First we consider the case where $1\leq a \leq p-2$, then 
  \begin{align*}
    \delta^1(r,s-1) &= \delta(a,p-2-a)(\delta^1(r_0,s_0)+\delta^1(r_0-1,s_0-1)) \\
                    &\,+\delta(p-1,a)\delta^1(r_0,s_0-1) + \delta(p-2-a,p-1)\delta^1(r_0-1,s_0)\\
    \delta^1(r-1,s) &= \delta(a-1,p-1-a)(\delta^1(r_0,s_0)+\delta^1(r_0-1,s_0-1)) \\
                    &\,+\delta(p-1,a-1)\delta^1(r_0,s_0-1) + \delta(p-1-a,p-1)\delta^1(r_0-1,s_0).
  \end{align*}
  Thus the difference is given by
  \begin{align*}
   &\delta^1(r,s-1)-\delta^1(r-1,s) = \\
   &\qquad (\delta(a,p-2-a)-\delta(a-1,p-1-a))(\delta^1(r_0,s_0) + \delta^1(r_0-1,s_0-1)) \\
   &\quad+(\delta(p-1,a)-\delta(p-1,a-1))\delta^1(r_0,s_0-1) \\
   &\quad+ (\delta(p-2-a,p-1)-\delta(p-1-a,p-1))\delta^1(r_0-1,s_0)
 \end{align*}
  where
  \begin{align*}
    \delta(a,p-2-a)-\delta(a-1,p-1-a)&=-\frac{p}{2}(2a+1-p)\\
    \delta(p-1,a) - \delta(p-1,a-1) &= \frac{p}{2}(2a+1+p) \\
    \delta(p-2-a,p-1)-\delta(p-1-a,p-1)&= \frac{p}{2}(2a+1-3p).
  \end{align*}
  By substituting 
  \[
    \delta^1(r_0,s_0)+\delta^1(r_0-1,s_0-1) = \delta^1(r_0,s_0-1)+\delta^1(r_0-1,s_0)
  \]
  and simplifying, we get 
  \[
    p^2(\delta^1(r_0,s_0-1)-\delta^1(r_0-1,s_0)).
  \]

  Now consider the case where $a=0$, 
  \begin{align*}
    \delta^1(r,s-1) &= \delta(0,p-2)(\delta^1(r_0,s_0)+\delta^1(r_0-1,s_0-1)) \\
                    &\,+\delta(p-1,0)\delta^1(r_0,s_0-1) + \delta(p-2,p-1)\delta^1(r_0-1,s_0)\\
    \delta^1(r-1,s) &= p^3\delta^1(r_0-1,s_0).
  \end{align*}
  So that the difference is 
  \begin{align*}
   &\delta^1(r,s-1)-\delta^1(r-1,s) = \\
   &\qquad \delta(0,p-2)(\delta^1(r_0,s_0) + \delta^1(r_0-1,s_0-1)) \\
   &\quad+\delta(p-1,0)\delta^1(r_0,s_0-1) + (\delta(p-2,p-1)-p^3)\delta^1(r_0-1,s_0).
 \end{align*}
 It is immediately verified upon simplification that 
  \[
    p^2(\delta^1(r_0,s_0-1)-\delta^1(r_0-1,s_0)).
  \]

  Finally, we consider the case when $a=p-1$ where 
  \begin{align*}
    \delta^1(r,s-1) &= p^3\delta^1(r_0,s_0-1) \\
    \delta^1(r-1,s) &= \delta(p-2,0)(\delta^1(r_0,s_0)+\delta^1(r_0-1,s_0-1)) \\
                    &\,+\delta(p-1,p-2)\delta^1(r_0,s_0-1) + \delta(0,p-1)\delta^1(r_0-1,s_0).
  \end{align*}
  Now we get 
  \begin{align*}
   &\delta^1(r,s-1)-\delta^1(r-1,s) = \\
   &\qquad -\delta(p-2,0)(\delta^1(r_0,s_0) + \delta^1(r_0-1,s_0-1)) \\
   &\quad+(p^3-\delta(p-1,p-2))\delta^1(r_0,s_0-1) - \delta(0,p-1)\delta^1(r_0-1,s_0).
 \end{align*}
 Again, it can be verified that when this expression is simplified, we obtain 
 \[
    p^2(\delta^1(r_0,s_0-1)-\delta^1(r_0-1,s_0)).
  \]
\end{proof}

We now have enough to prove the necessary non-vanishing result for $R_3^1(r,s)$. 
\begin{prop}\label{prop:R_3_vanish}
Let $(r,s) = (x,y) + p^k(z,-z-1)$ with $(x,y) \in X_k(T)$, $x+y = 2p^k-p^{k-1}-1$ and $z(z+1)=2p-2$,
then
\[
  \delta^1(r,s-1) - \delta^1(r-1,s) \neq 0.
\]
\end{prop}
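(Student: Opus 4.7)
My plan is to prove Proposition~\ref{prop:R_3_vanish} by induction on $k \geq 1$, combining Lemma~\ref{lem:k=1_case} as the base case with Lemma~\ref{lem:inductive_step} as the inductive step. The glue between the two is the explicit description from Proposition~\ref{prop:Q_3_vanish} of the weights annihilated by $Q_3^1$, which guarantees that the hypothesis of Lemma~\ref{lem:inductive_step} is satisfied at every stage of the recursion. I will carry the stronger statement $\delta^1(r,s-1) - \delta^1(r-1,s) > 0$ along the induction, since this is preserved by Lemma~\ref{lem:inductive_step} (which multiplies by the positive factor $p^2$) and established at the base by Lemma~\ref{lem:k=1_case}.

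As recalled in the text immediately preceding Lemma~\ref{lem:k=1_case}, every $(x,y) \in X_k(T)$ with $x + y = 2p^k - p^{k-1} - 1$ admits the parameterization
\[
  (x,y) = \sum_{i=0}^{k-2}p^i(a_i,\, p-1-a_i) + p^{k-1}(p-1,p-1), \qquad a_i \in \{0, \dots, p-1\}.
\]
For the base case $k=1$ this forces $(x,y) = (p-1,p-1)$, so $(r,s) = (p-1+pz,\, -1-pz)$ with $1 \leq z \leq p-1$, and Lemma~\ref{lem:k=1_case} yields $\delta^1(r,s-1) - \delta^1(r-1,s) > 0$ directly.

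For the inductive step $k \geq 2$, I would peel off the lowest digit and write $(r,s) = (a_0,\, p-1-a_0) + p(r_0, s_0)$ with
\[
  (r_0, s_0) = \Big(\sum_{j=0}^{k-3}p^j(a_{j+1},\, p-1-a_{j+1})\Big) + p^{k-2}(p-1,p-1) + p^{k-1}(z,\, -z-1).
\]
A short arithmetic check shows $(r_0, s_0) = (x'', y'') + p^{k-1}(z,-z-1)$ with $(x'', y'') \in X_{k-1}(T)$ and $x'' + y'' = 2p^{k-1} - p^{k-2} - 1$, placing $(r_0, s_0)$ squarely inside the vanishing locus described by Proposition~\ref{prop:Q_3_vanish}. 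Consequently $Q_3^1(r_0, s_0) = 0$, which is precisely the hypothesis of Lemma~\ref{lem:inductive_step}. Applying that lemma together with the inductive hypothesis $\delta^1(r_0, s_0-1) - \delta^1(r_0-1, s_0) > 0$ yields
\[
  \delta^1(r, s-1) - \delta^1(r-1, s) = p^2\bigl(\delta^1(r_0, s_0-1) - \delta^1(r_0-1, s_0)\bigr) > 0,
\]
closing the induction.

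The only step requiring any care is verifying the arithmetic identities: that the low-order digit of $(x,y)$ has the complementary form $(a_0, p-1-a_0)$ demanded by Lemma~\ref{lem:inductive_step}, and that the $(x'', y'')$ extracted from $(r_0, s_0)$ meets the sum and range conditions of Proposition~\ref{prop:Q_3_vanish} (for $k=2$ the nested sum is empty, which is the correct degenerate behavior). Neither check is deep; the real substance of the argument has already been absorbed into Lemmas~\ref{lem:k=1_case} and \ref{lem:inductive_step} and Proposition~\ref{prop:Q_3_vanish}, so I expect no serious analytic obstacle beyond careful bookkeeping of the base-$p$ digits.
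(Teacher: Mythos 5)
Your proposal is correct and follows essentially the same route as the paper: induction on $k$ with Lemma~\ref{lem:k=1_case} as the base case and Lemma~\ref{lem:inductive_step} as the inductive step, using the digit decomposition $(r,s)=(a_0,p-1-a_0)+p(r_0,s_0)$. Your explicit verification that $(r_0,s_0)$ lies in the vanishing locus of Proposition~\ref{prop:Q_3_vanish}, so that the hypothesis $Q_3^1(r_0,s_0)=0$ of Lemma~\ref{lem:inductive_step} holds, is a point the paper leaves implicit, and making it explicit is a welcome clarification rather than a deviation.
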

\begin{proof}
  The case when $k=1$ was handled earlier in Lemma~\ref{lem:k=1_case}, so we may assume that $k\geq 2$. 
  We observe that 
  \[
    (r,s) = (a,p-1-a)+p(r_0,s_0)
  \]
  where $(r_0,s_0)=(x_0,y_0) + p^{k-1}(z,-z-1)$ with $x_0+y_0=2p^{k-1}-p^k -1$.
  Thus if we assume that $\delta^1(r_0,s_0-1) -\delta^1(r_0-1,s_0) > 0$, then 
  by Lemma~\ref{lem:inductive_step} we must have that 
  \[
   \delta^1(r,s-1)-\delta^1(r-1,s) > 0.
 \]
 We are finished by induction. 
\end{proof}

\end{document}